\documentstyle[amscd,amssymb,xypic,verbatim,11pt]{amsart}
\xyoption{all}

\topmargin=-1.5cm
\oddsidemargin=-1cm
\evensidemargin=-.5cm
\textwidth=17.5cm
\textheight=23.5cm

\newcommand{\nc}{\newcommand}
\newcommand{\rnc}{\renewcommand}

\nc{\exto}[1]{\stackrel{#1}{\longrightarrow}}
\nc{\dlim}{{\mathop{\lim\limits_{\longrightarrow}}}}
\nc{\lan}{\big\langle}
\nc{\ran}{\big\rangle}

\nc{\kk}{{\mathsf{k}}}
\nc{\ix}{{\mathsf{i}}}
\nc{\jx}{{\mathsf{j}}}

\nc{\C}{{\mathbb{C}}}
\nc{\HH}{{\mathbb{H}}}
\nc{\LL}{{\mathbb{L}}}
\nc{\PP}{{\mathbb{P}}}
\nc{\RR}{{\mathbb{R}}}
\nc{\SS}{{\mathbb{S}}}
\nc{\TT}{{\mathbb{T}}}
\nc{\QQ}{{\mathbb{Q}}}
\nc{\ZZ}{{\mathbb{Z}}}

\nc{\CA}{{\mathcal{A}}}
\nc{\CB}{{\mathcal{B}}}
\nc{\CC}{{\mathcal{C}}}
\nc{\D}{{\mathcal{D}}}
\nc{\CE}{{\mathcal{E}}}
\nc{\CF}{{\mathcal{F}}}
\nc{\CG}{{\mathcal{G}}}
\nc{\CH}{{\mathcal{H}}}
\nc{\CJ}{{\mathcal{J}}}
\nc{\CL}{{\mathcal{L}}}
\nc{\CM}{{\mathcal{M}}}
\nc{\CN}{{\mathcal{N}}}
\nc{\CO}{{\mathcal{O}}}
\nc{\CQ}{{\mathcal{Q}}}
\nc{\CR}{{\mathcal{R}}}
\nc{\CS}{{\mathcal{S}}}
\nc{\CT}{{\mathcal{T}}}
\nc{\CU}{{\mathcal{U}}}
\nc{\CV}{{\mathcal{V}}}
\nc{\CW}{{\mathcal{W}}}
\nc{\CX}{{\mathcal{X}}}
\nc{\CY}{{\mathcal{Y}}}
\nc{\CZ}{{\mathcal{Z}}}
\nc{\CMo}{{\mathcal{M}^\circ}}
\nc{\Co}{{{C}^\circ}}

\nc{\BY}{{\overline{Y}}}
\nc{\BZ}{{\overline{Z}}}
\nc{\BYD}{{\overline{Y}{}^{|D|}}}
\nc{\OZ}{{\overline{Z}}}
\nc{\bg}{{\bar{g}}}

\nc{\bq}{{\mathbf{q}}}
\nc{\BD}{{\mathbf{D}}}
\nc{\BG}{{\mathbf{G}}}
\nc{\BL}{{\mathbf{L}}}
\nc{\BM}{{\mathbf{M}}}
\nc{\BP}{{\mathbf{P}}}
\nc{\BPr}{{\mathsf{P}}}
\nc{\BR}{{\mathbf{R}}}
\nc{\BRO}[1]{{{\mathbf{R}}^{\circ}_{#1}}}
\nc{\BRD}[1]{{{\mathbf{R}}^{|D|}_{#1}}}
\nc{\BRP}[1]{{{\mathbf{R}}^{1}_{#1}}}
\nc{\BRTP}[1]{{{\mathbf{\tilde{R}}}{}^{1}_{#1}}}
\nc{\BS}{{\mathbf{S}}}
\nc{\BT}{{\mathbf{T}}}
\nc{\BMS}{{{\mathbf{M}}^{{s}}}}
\nc{\BMSS}{{{\mathbf{M}}^{{ss}}}}
\nc{\BMZ}{{\mathbf{M}^{\circ}}}
\nc{\BCL}{{\mathbf{L}}}

\nc{\PCC}{{{}^\perp\CC}}

\nc{\Cl}{{\mathsf{Cliff}}}
\nc{\Clev}{{\mathop{\mathsf{Cliff}}^{\circ}}}

\nc{\FA}{{\mathfrak{A}}}
\nc{\FB}{{\mathfrak{B}}}
\nc{\FI}{{\mathfrak{I}}}
\nc{\FZ}{{\mathfrak{Z}}}

\nc{\TFA}{{\tilde{\mathfrak{A}}}}
\nc{\TFB}{{\tilde{\mathfrak{B}}}}

\nc{\fa}{{\mathfrak{a}}}
\nc{\fg}{{\mathfrak{g}}}
\nc{\fp}{{\mathfrak{p}}}
\nc{\FD}{{\mathfrak{D}}}
\nc{\FE}{{\mathfrak{E}}}
\nc{\FL}{{\mathfrak{L}}}
\nc{\FM}{{\mathfrak{M}}}
\nc{\FR}{{\mathfrak{R}}}
\nc{\FS}{{\mathsf{S}}}

\nc{\sfc}{{\mathsf{c}}}
\nc{\sfch}{{\mathsf{ch}}}
\nc{\sfh}{{\mathsf{h}}}

\nc{\SK}{{\mathsf{K}}}
\nc{\SO}{{\mathsf{O}}}
\nc{\SQ}{{\mathsf{Q}}}
\nc{\SPV}{{\mathsf{S}^+\mathsf{V}}}
\nc{\SMV}{{\mathsf{S}^-\mathsf{V}}}
\nc{\SPMV}{{\mathsf{S}^\pm\mathsf{V}}}
\nc{\SX}{{S_X}}
\nc{\SY}{{S_Y}}
\nc{\phipsi}{{q}}
\nc{\eps}{\varepsilon}

\nc{\pim}{{\pi_-}}
\nc{\pip}{{\pi_+}}

\nc{\BE}{{\overline{\CE}}}
\nc{\TE}{{\tilde{\CE}}}
\nc{\TQ}{{\tilde{Q}}}
\nc{\TCF}{{\tilde{\CF}}}
\nc{\TCG}{{\tilde{\CG}}}
\nc{\TCL}{{\tilde{\CL}}}
\nc{\TF}{{\tilde{F}}}
\nc{\TW}{{\tilde{W}}}
\nc{\TCB}{{\widetilde{\CB}}}
\nc{\TCC}{{\tilde{\CC}}}
\nc{\TCX}{{\tilde{\CX}}}
\nc{\TCY}{{\tilde{\CY}}}
\nc{\TPhi}{{\tilde{\Phi}}}
\nc{\OPhi}{{\bar{\Phi}}}
\nc{\txi}{{\tilde{\xi}}}
\nc{\tp}{{\tilde{p}}}
\nc{\tq}{{\tilde{q}}}
\nc{\tzeta}{{\tilde{\zeta}}}
\nc{\tpi}{{\tilde{\pi}}}
\nc{\Tsi}{{\tilde{\Sigma}}}

\nc{\HCB}{{\widehat{\CB}}}
\nc{\HE}{{\widehat{\CE}}}
\nc{\HS}{{\widehat{S}}}
\nc{\HX}{{\hat{X}}}
\nc{\hxi}{{\hat{\xi}}}

\nc{\UH}{{\mathcal{H}}}

\nc{\TM}{{\widetilde{M}}}
\nc{\TCM}{{\widetilde{\CM}}}
\nc{\TS}{{\widetilde{S}}}
\nc{\TU}{{\widetilde{U}}}
\nc{\TX}{{\widetilde{X}}}
\nc{\TY}{{\widetilde{Y}}}
\nc{\TYO}{{{\widetilde{Y}}^\circ}}
\nc{\barf}{{\bar{f}}}
\nc{\te}{{\tilde{e}}{}}
\nc{\tf}{{\tilde{f}}}
\nc{\tg}{{\tilde{g}}}
\nc{\ti}{{\tilde{\imath}}}
\nc{\tj}{{\tilde{\jmath}}}
\nc{\ty}{{\tilde{y}}}
\nc{\tphi}{{\tilde{\phi}}}
\nc{\hf}{{\hat{f}}}

\nc{\urho}{{\underline{\rho}}}

\nc{\LRA}{\Leftrightarrow}
\nc{\RA}{\Rightarrow}
\nc{\lotimes}{\mathbin{\mathop{\otimes}\limits^{\mathbb{L}}}}
\nc{\CEnd}{\mathop{\mathcal{E}\mathit{nd}}\nolimits}
\nc{\CExt}{\mathop{\mathcal{E}\mathit{xt}}\nolimits}
\nc{\CHom}{\mathop{\mathcal{H}\mathit{om}}\nolimits}
\nc{\RH}{\mathop{{\mathsf{R}}\Gamma}\nolimits}
\nc{\RGamma}{\mathop{{\mathsf{R}}\Gamma}\nolimits}
\nc{\Cone}{\mathop{\mathsf{Cone}}\nolimits}
\nc{\RHom}{\mathop{\mathsf{RHom}}\nolimits}
\nc{\RCHom}{\mathop{\mathsf{R}\mathcal{H}\mathit{om}}\nolimits}
\nc{\RG}{\mathop{\mathsf{R\Gamma}}\nolimits}
\nc{\Hom}{\mathop{\mathsf{Hom}}\nolimits}
\nc{\Ext}{\mathop{\mathsf{Ext}}\nolimits}
\nc{\End}{\mathop{\mathsf{End}}\nolimits}
\nc{\Tor}{\mathop{\mathsf{Tor}}\nolimits}
\nc{\Tordim}{\mathop{\mathsf{Tor}\text{\rm-}\mathsf{dim}}\nolimits}
\nc{\Hilb}{\mathop{\mathsf{Hilb}}\nolimits}
\nc{\Spec}{\mathop{\mathsf{Spec}}\nolimits}
\nc{\Proj}{\mathop{\mathsf{Proj}}\nolimits}
\nc{\Pic}{\mathop{\mathsf{Pic}}\nolimits}

\nc{\Tw}{\mathop{\mathsf{Tw}}\nolimits}
\nc{\Ker}{\mathop{\mathsf{Ker}}\nolimits}
\nc{\Coker}{\mathop{\mathsf{Coker}}\nolimits}
\nc{\codim}{\mathop{\mathsf{codim}}\nolimits}
\nc{\sing}{{\mathsf{sing}}}
\nc{\supp}{\mathop{\mathsf{supp}}}
\nc{\perf}{{\mathsf{perf}}}
\nc{\rank}{\mathop{\mathsf{rank}}}
\nc{\Pf}{{\mathsf{Pf}}}
\nc{\Gr}{{\mathsf{Gr}}}
\nc{\OGr}{{\mathsf{OGr}}}
\nc{\Flag}{{\mathsf{Fl}}}
\nc{\Kosz}{{\mathsf{Kosz}}}
\nc{\LGr}{{\mathsf{LGr}}}
\nc{\GTGr}{{\mathsf{G_2Gr}}}
\nc{\GTF}{{\mathsf{G_2F}}}
\nc{\OF}{{\mathsf{OF}}}
\nc{\Fl}{{\mathsf{Fl}}}
\nc{\Bl}{{\mathsf{Bl}}}
\nc{\GL}{{\mathsf{GL}}}
\nc{\PGL}{{\mathsf{PGL}}}
\nc{\SL}{{\mathsf{SL}}}
\nc{\SP}{{\mathsf{Sp}}}
\nc{\Spin}{{\mathsf{Spin}}}
\nc{\Tot}{{\mathsf{Tot}}}
\nc{\ev}{{\mathsf{ev}}}
\nc{\od}{{\mathsf{odd}}}
\nc{\coev}{{\mathsf{coev}}}
\nc{\id}{{\mathsf{id}}}
\nc{\opp}{{\mathsf{opp}}}
\nc{\PS}{{{\PP^3}}}
\nc{\Qu}{{{Q^3}}}
\nc{\tdim}{\mathop{\Tor\dim}}
\nc{\ecart}{{\fbox{$\scriptstyle\mathsf{EC}$}}}
\nc{\ad}{{\mathop{\mathsf ad}}}
\nc{\gr}{{\mathop{\mathsf gr}}}
\nc{\qgr}{{\mathop{\mathsf qgr}}}
\nc{\tor}{{\mathop{\mathsf tor}}}
\rnc{\mod}{{\mathop{\mathsf mod}}}
\nc{\Mod}{{\mathop{\mathsf Mod}}}
\nc{\Coh}{{\mathop{\mathsf Coh}}}
\nc{\Ab}{{\mathop{\mathcal{A}\mathit{b}}}}
\nc{\QCoh}{{\mathop{\mathsf QCoh}}}

\nc{\AAV}{{\mathcal{AAV}}}

\nc{\Rep}{{\mathsf{Rep}}}

\nc{\Cubics}{{{\mathcal{S}}_3}}
\nc{\VFT}{{{\mathcal{S}}_{14}}}
\nc{\VFTE}{{{\mathcal{N}}_{\mathrm{reg,sm}}}}
\nc{\MX}{{\CM_X}}
\nc{\MY}{{\CM_Y}}
\nc{\MYE}{{\CM_{Y,\CE}}}
\nc{\Yd}{{Y_d}}
\nc{\Yfive}{{Y_5}}
\nc{\Xg}{{X_{2g-2}}}
\nc{\Xtt}{{X_{22}}}
\nc{\Xst}{{X_{16}}}
\nc{\Xtw}{{X_{12}}}
\nc{\Xe}{{X_{8}}}
\nc{\Xf}{{X_{4}}}

\nc{\git}{{/\!\!/\!{}_\chi}}

\theoremstyle{plain}

\newtheorem{theorem}{Theorem}[section]
\newtheorem{conjecture}[theorem]{Conjecture}
\newtheorem{lemma}[theorem]{Lemma}
\newtheorem{proposition}[theorem]{Proposition}
\newtheorem{corollary}[theorem]{Corollary}

\theoremstyle{definition}

\newtheorem{definition}[theorem]{Definition}

\theoremstyle{remark}

\newtheorem{remark}[theorem]{Remark}

\newenvironment{proof}{\noindent{\sf Proof:}}{\qed\medskip}

\title
{On nodal Enriques surfaces and quartic double solids}
\author{Colin Ingalls}
\address{\sloppy
\parbox{0.9\textwidth}{
{\bf C.I.:\ }Department of Mathematics and Statistics,
University of New Brunswick,
\hfill\\[5pt]
Fredericton, NB E3B 5A3
Canada}\bigskip}
\email{cingalls@@unb.ca}
\thanks{C.I. was partially supported by a NSERC Discovery Grant.}
\author{Alexander Kuznetsov}
\address{\sloppy
\parbox{0.9\textwidth}{
{\bf A.K.:\ }Algebra Section, Steklov Mathematical Institute,
8 Gubkin str., Moscow 119991 Russia
\hfill\\[5pt]
The Poncelet Laboratory, Independent University of Moscow
\hfill
}\bigskip}
\email{akuznet@@mi.ras.ru}
\thanks{A.K. was partially supported by
RFFI grants
08-01-00297,
09-01-12170,
10-01-93110,
10-01-93113,
NSh-4713.2010.1.}
\date{}

\begin{document}

\maketitle

\begin{abstract}
We consider the class of singular double coverings $X \to \PP^3$ ramified in the degeneration locus $D$
of a family of 2-dimensional quadrics. These are precisely the quartic double solids constructed by Artin and Mumford
as examples of unirational but nonrational conic bundles. With such quartic surface $D$ one can associate
an Enriques surface $S$ which is the factor of the blowup of $D$ by a natural involution acting without fixed
points (such Enriques surfaces are known as nodal Enriques surfaces or Reye congruences). We show that
the nontrivial part of the derived category of coherent sheaves on this Enriques surface $S$ is equivalent
to the nontrivial part of the derived category of a minimal resolution of singularities of $X$.
\end{abstract}

\section{Introduction}


Recall that an Enriques surfaces $S$ is a smooth projective surface which can be represented
as the quotient of a K3 surface by an involution acting without fixed points.
Consecutively the canonical class $K_S$ of $S$ is a 2-torsion, $2K_S = 0$.
More than 10 years ago S.Zube~\cite{Zu} made the following nice observation

\begin{theorem}[\cite{Zu}]\label{zube}
Let $F_i^+$, $F_i^-$, $i =1,\dots,10$, be the multiple fibers of the $10$ elliptic pencils on $S$.
Then the line bundles $\{\CO_S(-F_i^+)\}_{i=1}^{10}$ form a completely orthogonal exceptional collection in $\D^b(S)$.
\end{theorem}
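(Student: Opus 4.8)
The plan is to reduce the statement to a handful of cohomology vanishings for line bundles on $S$ and to settle these by combining the numerical geometry of half-fibers with Serre duality. Exceptionality is immediate: for any line bundle $L$ one has $\Ext^\bullet(L,L)\cong H^\bullet(S,\CO_S)$, and on an Enriques surface $H^0(S,\CO_S)=\C$, $H^1(S,\CO_S)=0$ (the irregularity vanishes), while $H^2(S,\CO_S)\cong H^0(S,\CO_S(K_S))^\vee=0$ because $K_S$ is a nonzero $2$-torsion class. It therefore remains to prove \emph{complete} orthogonality, that is, $H^\bullet(S,\CO_S(F_i^+-F_j^+))=0$ for all $i\ne j$.

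First I would assemble the intersection numbers. Since $K_S$ is $2$-torsion it is numerically trivial, so $F_i^{\pm}\cdot K_S=0$ and all intersection numbers below are insensitive to the choice of sign; moreover the two half-fibers of the $i$-th pencil satisfy $F_i^-\sim F_i^++K_S$, both being halves of the common fiber class $2F_i^+\sim 2F_i^-$. Each half-fiber is isotropic, $(F_i^{\pm})^2=0$, and --- this is the combinatorial core of Zube's configuration --- the ten pencils are chosen so that $F_i^+\cdot F_j^+=1$ for $i\ne j$ (equivalently, the classes $F_i^+$ form a non-degenerate isotropic $10$-sequence in the Enriques lattice $\mathrm{Num}(S)\cong U\oplus E_8(-1)$). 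Finally, $2F_i^{\pm}$ is the base-point-free fiber class of the elliptic pencil, so each half-fiber $F_i^{\pm}$ is a nef divisor.

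Given these data the vanishing is formal. Riemann--Roch yields
$$\chi(\CO_S(F_i^+-F_j^+))=1+\frac12\big((F_i^+-F_j^+)^2-(F_i^+-F_j^+)\cdot K_S\big)=1-F_i^+\cdot F_j^+=0.$$
For $H^0$, if $F_i^+-F_j^+\sim E$ with $E$ effective then pairing against the nef class $F_i^+$ forces $F_i^+\cdot E\ge 0$, whereas $F_i^+\cdot(F_i^+-F_j^+)=-1$; hence $F_i^+-F_j^+$ is not effective and $H^0=0$. For $H^2$, Serre duality gives $H^2(S,\CO_S(F_i^+-F_j^+))\cong H^0(S,\CO_S(F_j^--F_i^+))^\vee$, and since $F_j^-$ is nef with $F_j^-\cdot(F_j^--F_i^+)=-1$, the class $F_j^--F_i^+$ is not effective, so this group vanishes too. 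Since $\chi=0$ and $h^0=h^2=0$, we conclude $h^1=0$, which completes the argument.

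The cohomological part is clean; the real content --- and the only place where the hypothesis that $S$ is an Enriques surface enters essentially --- is the numerical input $F_i^+\cdot F_j^+=1$. I regard this as the main obstacle: it rests on the structure theory of Enriques surfaces, specifically the existence and properties of a non-degenerate isotropic $10$-sequence of half-fiber classes in $U\oplus E_8(-1)$. If a self-contained account were wanted, I would derive this configuration from the arithmetic of the Enriques lattice, after which everything above goes through verbatim --- indeed for any choice of one half-fiber per pencil, since the estimates use only nefness and the sign-independent intersection numbers.
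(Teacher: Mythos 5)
Your proof is correct, but there is nothing in the paper to compare it against: the paper does not prove this statement at all, it simply imports it from Zube's paper \cite{Zu} as a known result. So your write-up is a genuine reconstruction, and it is the standard one. The two halves are both sound: exceptionality follows from $q(S)=p_g(S)=0$, and complete orthogonality follows from $H^\bullet(S,\CO_S(F_i^+-F_j^+))=0$ for $i\ne j$, which you get from Riemann--Roch ($\chi=0$ since $(F_i^+-F_j^+)^2=-2$ and $K_S$ is numerically trivial), non-effectivity of $F_i^+-F_j^+$ by pairing with the nef class $F_i^+$, and Serre duality together with $F_j^-\sim F_j^++K_S$ for the $H^2$ term; the symmetry of the argument in $i,j$ gives orthogonality in both directions, as required. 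You are also right to isolate $F_i^+\cdot F_j^+=1$ as the one nontrivial input: this is not a property of an arbitrary collection of elliptic pencils but is exactly the statement that the $F_i^+$ form a non-degenerate isotropic $10$-sequence in $\mathrm{Num}(S)\cong U\oplus E_8(-1)$, and this is precisely the configuration the paper has in mind (it is what makes the Fano polarization $\frac13\sum F_i^+$ meaningful, which the paper uses to define nodal Enriques surfaces). Treating that lattice-theoretic fact as a citation to the structure theory of Enriques surfaces (Cossec--Dolgachev) is a reasonable division of labor; with it granted, your argument is complete.
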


Since then it was an intriguing question to describe the orthogonal subcategory
\begin{equation}\label{cas}
\CA_S = {}^\perp\langle \{ \CO_S(-F_i^+) \}_{i=1}^{10} \rangle.
\end{equation}
The answer to this question is still not known. Our goal is to give a partial
answer for {\sf nodal Enriques surfaces}. Recall that Enriques surface $S$ is called {\sf nodal}
(another name is {\sf Reye congruence}), if the image of $S$ with respect to a morphism given
by a linear system $\frac13\sum F_i^+$ (called {\sf Fano model of $S$}) is contained in a quadric.
The main result of the present paper is a description of the category $\CA_S$
for nodal Enriques surfaces in other terms. Actually, we show that $\CA_S$ can be represented as a semiorthogonal
component of the derived category of a very special quartic double solid.

Recall that a quartic double solid is a double covering of $\PP^3$ ramified in a quartic surface $D \subset \PP^3$.
The bridge between quartic double solids and nodal Enriques surfaces is provided by special quartic surfaces,
the so-called {\sf quartic symmetroids}.

A quartic symmetroid is a quartic surface in $\PP^3 = \PP(W)$ which is a discriminant
locus of a family of quadrics in another $\PP^3 = \PP(V)$ parameterized by $\PP(W)$,
such that the quadrics in the family have no common points in $\PP(V)$.
The quartic symmetroid $D \subset \PP(W)$ parameterizes degenerate quadrics
in the family. It has 10 singular points corresponding to the quadrics of corank 2.
It turns out that the resolution of singularities $D'$ of $D$ has a natural
involution $\iota$ without fixed points and the quotient $S := D'/\iota$ is a nodal
Enriques surface.

On the other hand, one can consider the double covering $X \to \PP(W)$ ramified in the symmetroid~$D$.
As it was shown by F.Cossec~\cite{Co} such double coverings are exactly the Artin--Mumford conic bundles~\cite{AM}.
The main result of the present paper is a representation of the subcategory $\CA_S \subset \D^b(S)$
as a semiorthogonal component in $\D^b(X^+)$, the derived category of a small resolution $X^+$ of singularities of $X$.
More precisely we show that there is a semiorthogonal decomposition
$$
\D^b(X^+) = \langle \CA_S, \CO_{X^+}(-h), \CO_{X^+} \rangle,
$$
where $h$ stands for the positive generator of the Picard group of $\PP(W)$.

There is a subtle point in the above result. Namely, in general a minimal resolution of singularities
$X^+ \to X$ is not algebraic, in fact it is a Moishezon variety. However, since each Moishezon variety
is an algebraic space, its derived category of coherent sheaves (which is by definition is the subcategory
of the unbounded derived category of $\CO$-modules with bounded and coherent cohomology) is well-defined.
Moreover, it has all the properties the usual derived categories of coherent sheaves have.

On the other hand, if one wants to avoid nonalgebraic varieties, one can consider the blowup $X'$ of $X$
which is again a resolution of singularities, not small. Then we conjecture that one can realize
the whole derived category $\D^b(S)$ of the Enriques surface $S$ inside $\D^b(X')$.
More precisely, we conjecture that there is a semiorthogonal decomposition
$$
\D^b(X') = \langle \D^b(S), \CO_{X'}(-h), \{ \CO_{X'}(-e_i) \}_{i=1}^{10}, \CO_{X'} \rangle,
$$
where $e_i$ are the classes of the exceptional divisors of the blowup $X' \to X$.
We are going to return to this conjecture in future.

The paper is organized as follows.
In Section~\ref{s-pre} we remind some notions and constructions, such as
semiorthogonal decompositions, mutation functors and Serre functors.
In Section~\ref{s-qds} we discuss the derived categories of quartic double solids.
In Section~\ref{s-symm} we recall the quartic symmetroids and state the main result of the paper.
In Section~\ref{s-proof} we give the proof of the main result.

{\bf Acknowledgement.}
The first author would like to thank A.Bondal for helpful discussions.
The second author would like to thank L.Katzarkov and D.Orlov for helpful discussions
and is very grateful to I.Dolgachev for sharing many interesting facts about Enriques surfaces.

\section{Preliminaries}\label{s-pre}

The base field is the field $\C$ of complex numbers.

If $X$ is a Moishezon variety we denote by $\D^b(X)$ the full subcategory
of the unbounded derived category of $\CO_X$-modules with bounded and coherent cohomology.
This category is triangulated. For any morphism $f:X \to Y$ of Moishezon varieties we denote
by $f_*:\D^b(X) \to \D^b(Y)$ and by $f^*:\D^b(Y) \to \D^b(X)$ the {\em derived}\/
push-forward and pull-back functors (in first case we need $f$ to be proper,
and in the second to have finite $\Tor$-dimension, these assumptions ensure the functors
to preserve both boundedness and coherence). Similarly, $\otimes$
stands for the {\em derived}\/ tensor product. If $F \in \D^b(X)$
the derived tensor product functor $F\otimes -:\D^b(X) \to \D^b(X)$ will be denoted by $\TT_F$.

For a proper morphism of finite $\Tor$-dimension $f:X \to Y$ we will also use the right adjoint $f^!$
of the push-forward functor, which is given by the formula
$$
f^!(F) \cong f^*(F)\otimes\omega_{X/Y}[\dim X - \dim Y],
$$
where $\omega_{X/Y}$ is the relative canonical line bundle.

\subsection{Semiorthogonal decompositions}

Let $\CT$ be a triangulated category.

\begin{definition}[\cite{BK,BO95}]
A {\sf semiorthogonal decomposition}\/ of a triangulated category $\CT$ is a sequence of
full triangulated subcategories $\CA_1,\dots,\CA_n$ in $\CT$ such that
$\Hom_{\CT}(\CA_i,\CA_j) = 0$ for $i > j$
and for every object $T \in \CT$ there exists a chain of morphisms
$0 = T_n \to T_{n-1} \to \dots \to T_1 \to T_0 = T$ such that
the cone of the morphism $T_k \to T_{k-1}$ is contained in $\CA_k$
for each $k=1,2,\dots,n$.
\end{definition}

We will write $\CT = \langle \CA_1,\CA_2,\dots,\CA_n \rangle$ for a semiorthogonal
decomposition of a triangulated category $\CT$ with components $\CA_1,\CA_2,\dots,\CA_n$.

An important property of a triangulated subcategory $\CA \subset \CT$ ensuring that
it can be extended to a semiorthogonal decomposition is admissibility.

\begin{definition}[\cite{BK,B}]
A full triangulated subcategory $\CA$ of a triangulated category $\CT$ is called
{\sf admissible}\/ if for the inclusion functor $i:\CA \to \CT$ there is
a right adjoint $i^!:\CT \to \CA$, and a left adjoint $i^*:\CT \to \CA$ functors.
\end{definition}

\begin{lemma}[\cite{BK,B}]\label{sos_sod}
$(i)$ If\/ $\CA_1,\dots,\CA_n$ is a semiorthogonal sequence
of admissible subcategories in a triangulated category $\CT$\/
{\rm(}i.e. $\Hom_{\CT}(\CA_i,\CA_j) = 0$ for $i > j${\rm)}\/ then
$$
\lan\CA_1,\dots,\CA_k,
{}^\perp\lan\CA_1,\dots,\CA_k\ran \cap \lan\CA_{k+1},\dots,\CA_n\ran{}^\perp,
\CA_{k+1},\dots,\CA_n\ran
$$
is a semiorthogonal decomposition.

$(ii)$ If\/ $\D^b(X) = \lan \CA_1,\CA_2,\dots,\CA_n \ran$ is a semiorthogonal decomposition
of the derived category of a smooth projective variety $X$ then each subcategory $\CA_i \subset \D^b(X)$
is admissible.
\end{lemma}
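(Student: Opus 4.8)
The plan is to treat the two parts separately, reducing $(i)$ to the case of two grouped subcategories and deducing $(ii)$ from the existence of a Serre functor.

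For $(i)$, I would first check semiorthogonality of the proposed sequence, which is immediate from the definitions: writing $\CB = {}^\perp\lan\CA_1,\dots,\CA_k\ran \cap \lan\CA_{k+1},\dots,\CA_n\ran{}^\perp$ for the middle term, the inclusion $\CB \subset {}^\perp\lan\CA_1,\dots,\CA_k\ran$ gives $\Hom(\CB,\CA_i) = 0$ for $i \le k$, the inclusion $\CB \subset \lan\CA_{k+1},\dots,\CA_n\ran{}^\perp$ gives $\Hom(\CA_j,\CB) = 0$ for $j > k$, and the remaining vanishings hold by hypothesis. The real content is the existence, for each $T \in \CT$, of a filtration whose successive cones lie in the listed subcategories.

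Here I would set $\CU = \lan\CA_1,\dots,\CA_k\ran$ and $\CV = \lan\CA_{k+1},\dots,\CA_n\ran$ and first establish that a triangulated subcategory generated by a semiorthogonal sequence of admissible subcategories is itself admissible, so that $\CU$ and $\CV$ are admissible with $\Hom(\CV,\CU) = 0$. This is the substantive step, and I expect it to be the main obstacle: one constructs the right adjoint to the inclusion of $\CU$ by projecting an object $T$ successively onto $\CA_k, \CA_{k-1}, \dots, \CA_1$ using the adjoints supplied by admissibility, assembling the pieces through iterated octahedra and checking at each stage that the residual cone lands in the appropriate orthogonal; the left adjoint is built dually, and an induction on the number of components shows simultaneously that $\CU$ is admissible and that every object of $\CU$ carries the desired filtration by the $\CA_i$. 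Granting this, the decomposition follows formally. Since $\CV$ is admissible, $\CT = \lan \CV^\perp, \CV\ran$; since $\CU$ is admissible and $\CU \subset \CV^\perp$ (as $\Hom(\CV,\CU)=0$), the subcategory $\CV^\perp$ decomposes as $\CV^\perp = \lan \CU, {}^\perp\CU \cap \CV^\perp\ran$, and the left orthogonal of $\CU$ computed inside $\CV^\perp$ is exactly $\CB$. Substituting $\CV^\perp = \lan \CU, \CB\ran$ into $\CT = \lan \CV^\perp, \CV\ran$ and then refining $\CU$ and $\CV$ into their constituents $\CA_1,\dots,\CA_k$ and $\CA_{k+1},\dots,\CA_n$ yields the asserted semiorthogonal decomposition.

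For $(ii)$, the starting point is that the decomposition $\D^b(X) = \lan\CA_1,\dots,\CA_n\ran$ already exhibits one-sided adjoints: the canonical filtrations attached to the decomposition realize $\lan\CA_1,\dots,\CA_i\ran$ as a left-admissible subcategory and $\lan\CA_{i+1},\dots,\CA_n\ran$ as a right-admissible subcategory, the required one-sided adjoints being read off from the filtration. The point is to upgrade these to genuine two-sided admissibility, and this is where smoothness and projectivity of $X$ enter through the Serre functor $S_X = (-)\otimes\omega_X[\dim X]$ on $\D^b(X)$. I would invoke the fact that a right-admissible subcategory $\CA$ of a category possessing a Serre functor automatically inherits a Serre functor and is therefore also left-admissible, the left adjoint being obtained by conjugating the right adjoint with the Serre functors of $\CA$ and of $\D^b(X)$; dually for left-admissible subcategories. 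Thus each grouped subcategory is admissible, and composing the adjoints for the chain $\CA_i \subset \lan\CA_i,\dots,\CA_n\ran \subset \D^b(X)$ (equivalently, expressing $\CA_i$ as ${}^\perp\lan\CA_1,\dots,\CA_{i-1}\ran \cap \lan\CA_{i+1},\dots,\CA_n\ran{}^\perp$) shows that $\CA_i$ itself is admissible. The main obstacle in this part is the representability/saturation input underlying the Serre-functor argument, which is precisely the point at which the general triangulated formalism must be supplemented by the geometry of $X$.
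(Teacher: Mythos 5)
The paper itself gives no proof of this lemma: it is quoted from \cite{B} and \cite{BK}, and the only indication of the intended argument is the remark immediately after it that part $(ii)$ holds for any \emph{saturated} triangulated category. So your proposal has to be measured against that classical (Bondal/Bondal--Kapranov) argument. Your part $(i)$ is correct and is exactly that argument: semiorthogonality of the three-term sequence is formal, the real content is that $\CU = \langle \CA_1,\dots,\CA_k\rangle$ and $\CV = \langle\CA_{k+1},\dots,\CA_n\rangle$ are admissible, proved by induction on the number of components by composing the successive projections and assembling them with octahedra; after that, decomposing $\CT = \langle \CV^\perp,\CV\rangle$ and then $\CV^\perp = \langle \CU,\ {}^\perp\CU\cap\CV^\perp\rangle$ gives the statement.

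Part $(ii)$, however, has a genuine gap at its pivot. You invoke as ``a fact'' that a right-admissible subcategory $\CA \subset \CT$ of a category with a Serre functor inherits a Serre functor and is therefore left admissible, with left adjoint $\SS_\CA^{-1}\circ i^!\circ\SS$. What is actually automatic (this is Lemma~\ref{mut_funct} of the paper) is only that $\SS_\CA = i^!\circ\SS\circ i$ satisfies the bifunctorial duality isomorphism; for your left-adjoint formula you need $\SS_\CA$ to be \emph{invertible}, and for a merely right-admissible $\CA$ this is not formal. Indeed, unwinding Serre duality, the left adjoint must send $T$ to an object of $\CA$ corepresenting the functor $A \mapsto \Hom_\CA(A, i^!\SS T)^\vee$, i.e.\ its existence is a corepresentability statement; and one checks that for right-admissible $\CA$ the invertibility of $i^!\SS i$ is \emph{equivalent} to the left admissibility of $\CA$ (equivalently, to right admissibility of $\CA^\perp$). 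So the Serre functor alone only trades the problem for an equivalent one, and the step as you state it is circular: the unproved ``fact'' is precisely the content of $(ii)$.

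The circle is broken by the Bondal--Kapranov representability theorem, which is where smoothness and projectivity of $X$ really enter: $\D^b(X)$ is saturated (every contravariant cohomological functor of finite type is representable); a one-sided admissible subcategory of a saturated $\Ext$-finite category is again saturated; and a saturated full subcategory of an $\Ext$-finite category is admissible, both adjoints being produced by representing the functors $\Hom(i(-),T)$ and $\Hom(T,i(-))^\vee$. You do name ``representability/saturation'' as the main obstacle, which is the right instinct, but in your write-up it appears only as a caveat underneath the Serre-functor shortcut rather than as the working mechanism; as structured, the proof of $(ii)$ is missing its essential ingredient.
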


Actually the second part of the Lemma holds for any {\em saturated}\/ (see~\cite{BK}) triangulated category.

\begin{definition}[\cite{B}]
An object $F \in \CT$ is called {\em exceptional}\/ if $\Hom(F,F)=\C$
and $\Ext^p(F,F)=0$ for all $p\ne 0$. A collection of exceptional
objects $(F_1,\dots,F_m)$ is called {\em exceptional}\/ if
$\Ext^p(F_l,F_k)=0$ for all $l > k$ and all $p\in\ZZ$.
\end{definition}

Assume that $\CT$ is {\em $\Ext$-finite}\/ (which means that for any objects $G,G' \in \CT$
the graded vector space $\Ext^\bullet(G,G') := \oplus_{t\in\ZZ} \Hom(G,G'[t])$ is finite-dimensional).

\begin{lemma}[\cite{B}]\label{eo}
The subcategory $\lan F \ran$ of $\D^b(X)$ generated by an exceptional object $F$ is
admissible and is equivalent to the derived category of vector spaces $\D^b(\C)$.
\end{lemma}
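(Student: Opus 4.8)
The plan is to identify $\lan F \ran$ as the essential image of an explicit exact functor out of $\D^b(\C)$ and to write down the adjoints to the inclusion by hand. I introduce
$$
\Phi \colon \D^b(\C) \longrightarrow \D^b(X), \qquad \Phi(V) = F \otimes V,
$$
the tensor product over $\C$ of $F$ with the complex of vector spaces $V$; since over a field every object of $\D^b(\C)$ is a finite direct sum of shifts of $\C$, each $\Phi(V)$ is a finite direct sum of shifts of $F$. The functor $\Phi$ is exact and $\Phi(\C) = F$. The key point is full faithfulness, which is exactly where exceptionality enters: for $V, W \in \D^b(\C)$ one has a natural isomorphism
$$
\RHom_{\D^b(X)}(F \otimes V, F \otimes W) \cong \RHom_{\C}(V,W) \otimes \RHom_{\D^b(X)}(F,F) \cong \RHom_{\C}(V,W),
$$
the last step using that $\RHom(F,F) = \C$ is concentrated in degree $0$. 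Taking cohomology gives $\Hom(\Phi V, \Phi W) = \Hom_{\D^b(\C)}(V,W)$, so $\Phi$ is fully faithful and hence an equivalence onto its essential image. That image is a triangulated subcategory containing $F$, and being generated by $F$ it is the smallest such; therefore it coincides with $\lan F \ran$, which proves that $\lan F \ran \simeq \D^b(\C)$.

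It remains to establish admissibility, i.e. to produce a right and a left adjoint to the inclusion $i \colon \lan F \ran \to \D^b(X)$. Here the $\Ext$-finiteness hypothesis is used: for every $G \in \D^b(X)$ both $\RHom(F,G)$ and $\RHom(G,F)$ are bounded complexes of finite-dimensional vector spaces, hence genuine objects of $\D^b(\C)$. I then set
$$
i^!(G) = F \otimes \RHom(F,G), \qquad i^*(G) = F \otimes \RHom(G,F)^{\vee},
$$
where $(-)^{\vee}$ denotes the $\C$-dual complex. The required adjunctions follow from a direct computation combining the full faithfulness of $\Phi$ with the identity $\RHom_{\C}(U^{\vee},V) \cong U \otimes V$ for finite-dimensional $U$: one checks $\RHom(F \otimes V, i^! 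G) \cong \RHom(F \otimes V, G)$ and $\RHom(i^* G, F \otimes V) \cong \RHom(G, F \otimes V)$, naturally in $V$ and $G$. The existence of $i^!$ and $i^*$ is precisely the definition of admissibility.

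For this statement there is no deep obstacle; the whole argument is formal manipulation of $\RHom$ in a triangulated category. The only genuine inputs are the single degree-zero condition $\RHom(F,F) = \C$, which makes $\Phi$ fully faithful, and the $\Ext$-finiteness of $\CT$, which keeps $\RHom(F,G)$ and $\RHom(G,F)$ inside $\D^b(\C)$ so that the candidate adjoints are well defined. The most error-prone part is simply the bookkeeping that verifies the adjunction units and counits are natural and that the two adjoints indeed land in $\lan F \ran$.
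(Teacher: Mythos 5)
Your proof is correct, and it is essentially the intended argument: the paper itself gives no proof of this lemma (it is quoted from Bondal's paper \cite{B}), but your explicit adjoints $i^!(G) = F\otimes\RHom(F,G)$ and $i^*(G) = F\otimes\RHom(G,F)^\vee$ are precisely the formulas the paper records in the remark following Lemma~\ref{mutfun}, where the triangles~\eqref{excmut} are exactly the (co)units of your adjunctions. The only points worth a word of care --- that the essential image of your fully faithful functor $\Phi$ is a strictly full triangulated subcategory closed under summands (so it really equals $\lan F \ran$), and the naturality bookkeeping you flag --- are routine and do not affect correctness.
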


As a consequence of~\ref{sos_sod} and of~\ref{eo} one obtains the following

\begin{corollary}[\cite{BO95}]\label{sodgen}
If $\CT$ is an $\Ext$-finite triangulated category then any exceptional collection $F_1,\dots,F_m$ in $\CT$
induces a semiorthogonal decomposition
$$
\CT = \langle \CA , F_1, \dots, F_m \rangle
$$
where $\CA = \langle F_1, \dots, F_m \rangle^\perp = \{F \in \CT\ |\ \text{$\Ext^\bullet(F_k,F) = 0$ for all $1 \le k \le m$}\}$.
\end{corollary}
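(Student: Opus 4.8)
The plan is to deduce the statement directly from Lemmas \ref{eo} and \ref{sos_sod} by exhibiting the subcategories $\langle F_k\rangle$ as a semiorthogonal sequence of admissible subcategories and then completing it on the left with its orthogonal complement.

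First I would invoke Lemma \ref{eo} to conclude that each one-object subcategory $\langle F_k\rangle \subset \CT$ is admissible (indeed equivalent to $\D^b(\C)$). Next I would reinterpret the defining property of an exceptional collection, namely $\Ext^p(F_l,F_k)=0$ for all $l>k$ and all $p\in\ZZ$, as the semiorthogonality of the sequence $\langle F_1\rangle,\dots,\langle F_m\rangle$: since $\langle F_l\rangle$ and $\langle F_k\rangle$ consist of shifts of $F_l$ and $F_k$ respectively, the vanishing $\Ext^\bullet(F_l,F_k)=0$ is exactly $\Hom_\CT(\langle F_l\rangle,\langle F_k\rangle)=0$ for $l>k$. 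Thus $\langle F_1\rangle,\dots,\langle F_m\rangle$ is a semiorthogonal sequence of admissible subcategories in the sense required by Lemma \ref{sos_sod}.

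I would then apply Lemma \ref{sos_sod}$(i)$ to this sequence, splitting it so that the left-hand block is empty and all $m$ subcategories form the right-hand block. The component inserted by the lemma is the intersection of the left orthogonal of the empty block, which is all of $\CT$, with the right orthogonal $\langle F_1,\dots,F_m\rangle^\perp$ of the subcategory generated by the whole collection. Since $\Hom$ out of $\langle F_1,\dots,F_m\rangle$ is controlled by $\Hom$ out of the generators $F_k$ and their shifts, this intersection is precisely $\{F\in\CT \mid \Ext^\bullet(F_k,F)=0 \text{ for all } k\} = \CA$. Lemma \ref{sos_sod}$(i)$ therefore returns the semiorthogonal decomposition $\CT = \langle\CA,\langle F_1\rangle,\dots,\langle F_m\rangle\rangle$, and rewriting each $\langle F_k\rangle$ as $F_k$ gives the asserted $\CT=\langle\CA,F_1,\dots,F_m\rangle$.

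The argument is formal once the two lemmas are in hand, so there is no serious obstacle; the only points demanding care are bookkeeping. I would verify that the index conventions match --- the exceptional-collection vanishing $\Ext^p(F_l,F_k)=0$ for $l>k$ must align with the decomposition convention $\Hom(\CA_i,\CA_j)=0$ for $i>j$ once $\CA$ is placed in the first slot --- and that the degenerate split with empty left-hand block is a legitimate instance of Lemma \ref{sos_sod}$(i)$, producing $\CA$ as the first (left-most) component rather than the last.
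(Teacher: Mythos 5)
Your proposal is correct and is exactly the paper's argument: the paper states this corollary as an immediate consequence of Lemmas~\ref{sos_sod} and~\ref{eo}, and your write-up simply fills in the same deduction (admissibility of each $\langle F_k\rangle$ from Lemma~\ref{eo}, semiorthogonality from exceptionality, then Lemma~\ref{sos_sod}$(i)$ with the empty left block to insert $\CA = \langle F_1,\dots,F_m\rangle^\perp$ as the left-most component). The bookkeeping points you flag are handled correctly, so nothing further is needed.
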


\subsection{Serre functors}

\begin{definition}[\cite{BK},\cite{BO01}]
Let $\CT$ be a triangulated category.
A covariant additive functor $\SS:\CT\to\CT$ is a {\sf Serre functor}\/
if it is a category equivalence and for all objects $F,G\in\CT$ there are
given bi-functorial isomorphisms $\Hom(F,G)\to\Hom(G,\SS(F))^\vee$.
\end{definition}

If a Serre functor exists then it is unique up to a canonical
functorial isomorphism. If $X$ is a smooth projective variety
then $\SS(F):=F\otimes\omega_X[\dim X]$ is a Serre functor
in $\D^b(X)$.

\begin{lemma}[\cite{B}]\label{mut_funct}
If $\CT$ admits a Serre functor $\SS$ and $\CA\subset\CT$ is right admissible
then $\CA$ admits a Serre functor $\SS_\CA = i^!\circ\SS\circ i$,
where $i:\CA \to \CT$ is the inclusion functor.
\end{lemma}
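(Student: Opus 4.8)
The plan is to verify directly that the functor $\SS_\CA := i^!\circ\SS\circ i$ satisfies the two requirements in the definition of a Serre functor: the bifunctorial duality isomorphism, and that it is an equivalence. The construction itself makes sense precisely because right admissibility of $\CA$ supplies the right adjoint $i^!$, while $i$ and $\SS$ are exact, so $\SS_\CA$ is an exact endofunctor of $\CA$.

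First I would establish the duality pairing. For $F,G\in\CA$ I assemble the chain of natural isomorphisms
\[
\Hom_\CA(G,\SS_\CA F) = \Hom_\CA(G, i^!\SS i F) \cong \Hom_\CT(iG,\SS iF) \cong \Hom_\CT(iF,iG)^\vee \cong \Hom_\CA(F,G)^\vee,
\]
where the first isomorphism is the adjunction $i\dashv i^!$, the second is the Serre duality of $\CT$ applied to the objects $iF,iG$, and the third is full faithfulness of the inclusion $i$. Dualizing the resulting identification gives the required $\Hom_\CA(F,G)\cong\Hom_\CA(G,\SS_\CA F)^\vee$. Since each of the three isomorphisms is bifunctorial (naturality of the adjunction unit and counit, bifunctoriality of the Serre pairing on $\CT$, and functoriality of $i$), the composite is bifunctorial in $F$ and $G$ as well; this is the routine part of the argument.

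It remains to show that $\SS_\CA$ is an equivalence. Full faithfulness follows formally from the duality just proved: applying it twice gives, for $F,F'\in\CA$, natural isomorphisms $\Hom_\CA(\SS_\CA F,\SS_\CA F')\cong\Hom_\CA(F',\SS_\CA F)^\vee\cong\Hom_\CA(F,F')$, and a compatibility check identifies this composite with the map induced by $\SS_\CA$ itself. The genuine obstacle is essential surjectivity. I expect to clear it using the representability available in this setting: for $Y\in\CA$ the covariant functor $Z\mapsto\Hom_\CA(Z,Y)^\vee$ is corepresentable by some $F\in\CA$, because $\CA$, being admissible inside the saturated category $\CT$, is itself saturated; the duality isomorphism then forces $\SS_\CA F\cong Y$ by the Yoneda lemma. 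Alternatively one may produce an explicit quasi-inverse of the form $i^*\circ\SS^{-1}\circ i$, after first checking that $\CA$ is in addition left admissible — a standard consequence of the presence of a Serre functor, obtained from the identity ${}^\perp\CA=\SS^{-1}(\CA^\perp)$ — and then verifying the inverse relations on the canonical decomposition triangles associated with $i^!$ and $i^*$; showing that the error terms in those triangles are annihilated is the only delicate point in this route.
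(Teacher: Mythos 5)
Your duality computation is, verbatim, the paper's entire proof: it consists exactly of the chain $\Hom_\CA(A,i^!\SS iA')\cong\Hom_\CT(iA,\SS iA')\cong\Hom_\CT(iA',iA)^\vee\cong\Hom_\CA(A',A)^\vee$ (adjunction, Serre duality on $\CT$, full faithfulness of $i$) and stops there; the requirement, present in the paper's own definition, that a Serre functor be an equivalence is left unaddressed, the lemma being quoted from \cite{B}. So the part of your argument that overlaps with the paper is correct and identical in approach, and your remark that full faithfulness of $\SS_\CA$ follows formally from the duality is likewise standard and correct.

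The gap is in essential surjectivity, which you rightly single out as the genuine obstacle but do not close under the lemma's actual hypotheses ($\CT$ has a Serre functor and $\CA$ is right admissible --- nothing more). Route (a) assumes $\CT$ saturated, which is not among the hypotheses; it does hold in every application in this paper ($\CT=\D^b(X)$ with $X$ smooth projective or smooth proper Moishezon), so that route completes the proof where the lemma is actually used, but not for the statement as written. Route (b) is circular: the identity ${}^\perp\CA=\SS^{-1}(\CA^\perp)$ is true, but it only exhibits ${}^\perp\CA$ as the image under an autoequivalence of $\CA^\perp$, and $\CA^\perp$ --- the left factor of the decomposition $\CT=\langle\CA^\perp,\CA\rangle$ furnished by right admissibility --- is only known to be \emph{left} admissible; hence all you learn is that ${}^\perp\CA$ is left admissible. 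Left admissibility of $\CA$ requires instead that ${}^\perp\CA$ be \emph{right} admissible, i.e.\ a decomposition $\CT=\langle\CA,{}^\perp\CA\rangle$, and this requirement is equivalent to the very statement you are proving: if $i^*$ exists, then $i^*\circ\SS^{-1}\circ i$ is a left Serre functor on $\CA$, hence quasi-inverse to $\SS_\CA$; conversely, if $\SS_\CA$ is an equivalence, then $\SS_\CA^{-1}\circ i^!\circ\SS$ is a left adjoint of $i$. So ``right admissible plus Serre functor on $\CT$ implies left admissible'' cannot be invoked as a formal standard fact here --- it \emph{is} the equivalence assertion in disguise, and it needs a representability input (such as saturatedness of $\CT$) that the abstract statement does not supply.
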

\begin{proof}
If $A,A'\in\CA$ then
$\Hom_\CA(A,i^!\SS i A') \cong
\Hom_\CT(i A,\SS i A') \cong
\Hom_\CT(i A',i A)^\vee \cong
\Hom_\CA(A',A)^\vee$.
\end{proof}

\subsection{Mutations}

If a triangulated category $\CT$ has a semiorthogonal decomposition then
usually it has quite a lot of them. More precisely, there are two groups
acting on the set of semiorthogonal decompositions --- the group of
autoequivalences of $\CT$, and a certain braid group. The action of the braid
group is given by the so-called mutations.

Roughly speaking, the mutated decomposition is obtained by dropping one of the components
of the decomposition and then extending the obtained semiorthogonal collection
by inserting new component at some other place as in Lemma~\ref{sos_sod}.
More precisely, the basic two operations are defined as follows.

\begin{lemma}[\cite{B}]\label{mutfun}
Assume that $\CA \subset \CT$ is an admissible subcategory, so that we have
two semiorthogonal decompositions $\CT = \lan \CA^\perp, \CA \ran$
and $\CT = \lan \CA, {}^\perp\CA \ran$.
Then there are functors $\LL_\CA,\RR_\CA: \CT \to \CT$
vanishing on~$\CA$ and inducing mutually inverse equivalences
${}^\perp\CA \to \CA^\perp$ and $\CA^\perp \to {}^\perp\CA$ respectively.
\end{lemma}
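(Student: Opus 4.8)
The plan is to build $\LL_\CA$ and $\RR_\CA$ explicitly from the two adjunctions supplied by admissibility, and then to recognize their defining triangles as the canonical decomposition triangles of the two given semiorthogonal decompositions. Write $i:\CA\to\CT$ for the inclusion, with right adjoint $i^!$ and left adjoint $i^*$. For each $T\in\CT$ I would set $\LL_\CA T$ to be the cone of the counit, i.e. the third vertex of a distinguished triangle $i i^! T \to T \to \LL_\CA T \to (i i^! T)[1]$ whose first arrow is the counit $i i^! T\to T$, and $\RR_\CA T$ to be the shifted cone of the unit, i.e. the first vertex of $\RR_\CA T \to T \to i i^* T \to \RR_\CA T[1]$ whose middle arrow is the unit $T\to i i^* T$.

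First I would promote these object-wise assignments to functors. Given $f:T\to T'$, naturality of the counit fills the left-hand square of a morphism between the defining triangles of $T$ and $T'$, and the triangulated axioms produce a morphism $\LL_\CA T\to\LL_\CA T'$ completing it; this completion is moreover \emph{unique}, since any two choices differ by a map factoring through $\Hom\big((i i^! T)[1],\LL_\CA T'\big)$, a group that vanishes because $(i i^! T)[1]\in\CA$, $\LL_\CA T'\in\CA^\perp$ (as checked below) and $\Hom(\CA,\CA^\perp)=0$. Uniqueness makes $T\mapsto\LL_\CA T$ a functor, and simultaneously identifies its defining triangle with the functorial decomposition of $T$ relative to $\CT=\lan\CA^\perp,\CA\ran$; dually $\RR_\CA$ is the projection attached to $\CT=\lan\CA,{}^\perp\CA\ran$. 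This passage from a cone to an honest functor is the only genuinely delicate point, and the vanishing that rescues it is exactly what semiorthogonality provides.

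Next I would verify the three asserted properties. Vanishing on $\CA$ is immediate: $i$ fully faithful gives $i^!i\cong\id\cong i^*i$, so for $T\in\CA$ both the counit and the unit are isomorphisms and their cones $\LL_\CA T$, $\RR_\CA T$ vanish. To see that $\LL_\CA T\in\CA^\perp$ for every $T$, I apply $\Hom(A,-)$ with $A\in\CA$ to the defining triangle: using $i^!i\cong\id$ one checks that the counit induces an isomorphism $\Hom(A,i i^! T)\to\Hom(A,T)$ in every degree, so the long exact sequence forces $\Ext^\bullet(A,\LL_\CA T)=0$. The dual computation gives $\RR_\CA T\in{}^\perp\CA$ for every $T$. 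In particular $\LL_\CA$ restricts to a functor ${}^\perp\CA\to\CA^\perp$ and $\RR_\CA$ to a functor $\CA^\perp\to{}^\perp\CA$.

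Finally, for the mutual-inverse statement I would invoke uniqueness of the decompositions. Take $T\in{}^\perp\CA$ and rotate its left-mutation triangle into $T \to \LL_\CA T \to (i i^! T)[1] \to T[1]$. Here the outer term $T$ lies in ${}^\perp\CA$ and the term $(i i^! T)[1]$ lies in $\CA$, so this distinguished triangle is a decomposition of $\LL_\CA T$ with respect to $\CT=\lan\CA,{}^\perp\CA\ran$; by uniqueness it coincides with the defining right-mutation triangle of $\LL_\CA T$, whence $\RR_\CA\LL_\CA T\cong T$. Rotating the right-mutation triangle of an object $S\in\CA^\perp$ in the same way exhibits it as the left-mutation triangle of $\RR_\CA S$ and yields $\LL_\CA\RR_\CA S\cong S$. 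Hence the restricted functors ${}^\perp\CA\to\CA^\perp$ and $\CA^\perp\to{}^\perp\CA$ are mutually inverse equivalences, completing the proof.
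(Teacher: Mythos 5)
Your proposal is correct and follows exactly the paper's own route: the paper defines $\LL_\CA(F)=\Cone(ii^!F\to F)$ and $\RR_\CA(F)=\Cone(F\to ii^*F)[-1]$, notes that the cones are functorial thanks to semiorthogonality, and leaves the remaining properties as direct verifications. Your writeup simply supplies those verifications in full (uniqueness of the cone completion, vanishing on $\CA$, the image conditions, and the mutual-inverse statement via uniqueness of decomposition triangles), all of which are accurate.
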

\begin{proof}
Let $i:\CA \to \CT$ be the embedding functor. For any $F \in \CT$ we define
$$
\LL_\CA(F) = \Cone (i i^! F \to F),
\qquad
\RR_\CA(F) = \Cone (F \to i i^* F)[-1].
$$
Note that the cones in this triangles are functorial due to the semiorthogonality.
All the properties are verified directly.
\end{proof}

The functors $\LL_\CA$ and $\RR_\CA$ are known as the {\sf left}\/ and the {\sf right mutation functors}.

\begin{remark}
If $\CA$ is generated by an exceptional object $E$ we can use explicit formulas for
the adjoint functors $i^!$, $i^*$ of the embedding functor $i:\CA \to \CT$. Thus we obtain
the following distinguished triangles
\begin{equation}\label{excmut}
\Ext^\bullet(E,F)\otimes E \to F \to \LL_E(F),
\qquad
\RR_E(F) \to F \to \Ext^\bullet(F,E)^\vee\otimes E.
\end{equation}
\end{remark}

It is easy to deduce from Lemma~\ref{mutfun} the following

\begin{corollary}[\cite{B}]
Assume that $\CT = \lan \CA_1,\CA_2,\dots,\CA_n \ran$ is a semiorthogonal decomposition with
all components being admissible. Then for each $1 \le k \le n-1$ there is a semiorthogonal decomposition
$$
\CT = \lan \CA_1,\dots,\CA_{k-1},
\LL_{\CA_k}(\CA_{k+1}),
\CA_k,\CA_{k+2},\dots,\CA_n \ran
$$
and for each $2 \le k \le n$ there is a semiorthogonal decomposition
$$
\CT = \lan \CA_1,\dots,\CA_{k-2},\CA_k,
\RR_{\CA_k}(\CA_{k-1}),
\CA_{k+1},\dots,\CA_n \ran
$$
\end{corollary}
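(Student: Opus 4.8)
The plan is to reduce each statement to a single swap inside a two-term subcategory and then reassemble. I treat the left mutation; the right one is dual. First I would group the two components to be exchanged, setting $\CB = \lan \CA_k, \CA_{k+1}\ran$. Because the components are admissible and semiorthogonal, $\CB$ is itself a full triangulated subcategory occurring as one block of a coarser semiorthogonal decomposition
$$
\CT = \lan \CA_1,\dots,\CA_{k-1},\, \CB,\, \CA_{k+2},\dots,\CA_n\ran;
$$
indeed, the semiorthogonality $\Hom(\CA_i,\CA_k)=\Hom(\CA_i,\CA_{k+1})=0$ for $i<k$ and $\Hom(\CA_j,\CA_k)=\Hom(\CA_j,\CA_{k+1})=0$ for $j>k+1$ shows $\Hom(\CA_i,\CB)=0$ and $\Hom(\CA_j,\CB)=0$ respectively, and the filtration of any $T\in\CT$ coming from the original decomposition is coarsened by merging its $\CA_k$- and $\CA_{k+1}$-steps into a single $\CB$-step. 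The only structural fact I need, beyond Lemma~\ref{mutfun}, is that such grouping of consecutive blocks --- and its inverse, the refinement of one block by a semiorthogonal decomposition of that block --- preserves the property of being a semiorthogonal decomposition.

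Next I would perform the exchange inside $\CB$. Regarding $\CA_k$ as an admissible subcategory of $\CB$, the identity $\CB=\lan\CA_k,\CA_{k+1}\ran$ together with $\Hom(\CA_{k+1},\CA_k)=0$ identifies $\CA_{k+1}$ with the left orthogonal ${}^\perp\CA_k$ computed inside $\CB$. Lemma~\ref{mutfun} then gives that $\LL_{\CA_k}$ restricts to an equivalence ${}^\perp\CA_k\to\CA_k^\perp$, whence $\LL_{\CA_k}(\CA_{k+1})=\CA_k^\perp$ (right orthogonal inside $\CB$) and
$$
\CB=\lan \LL_{\CA_k}(\CA_{k+1}),\, \CA_k\ran.
$$
Refining the coarse decomposition by replacing the block $\CB$ with this two-term decomposition yields precisely $\lan \CA_1,\dots,\CA_{k-1},\LL_{\CA_k}(\CA_{k+1}),\CA_k,\CA_{k+2},\dots,\CA_n\ran$; the cross-block semiorthogonality is inherited from $\Hom(\CA_i,\CB)=0$ and $\Hom(\CA_j,\CB)=0$ established above. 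For the right mutation I would instead group $\CB'=\lan\CA_{k-1},\CA_k\ran$, identify $\CA_{k-1}$ with $\CA_k^\perp$ inside $\CB'$, and use the other half of Lemma~\ref{mutfun} --- that $\RR_{\CA_k}$ restricts to an equivalence $\CA_k^\perp\to{}^\perp\CA_k$ --- to obtain $\CB'=\lan\CA_k,\RR_{\CA_k}(\CA_{k-1})\ran$ before refining.

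The point demanding care is that the corollary writes $\LL_{\CA_k}$ for the mutation functor on all of $\CT$, while Lemma~\ref{mutfun} produces an equivalence between orthogonals taken inside $\CB$; I expect this compatibility to be the main, though routine, obstacle. It is settled by noting that for $F\in\CA_{k+1}\subset\CB$ the defining triangle $\LL_{\CA_k}(F)=\Cone(ii^!F\to F)$ has $ii^!F\in\CA_k\subset\CB$, so $\LL_{\CA_k}(F)\in\CB$, and the adjoint $i^!F$ representing $\Hom_\CT(i(-),F)$ agrees with the one computed for $\CA_k\hookrightarrow\CB$ since $F$ and the test objects all lie in $\CB$. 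Hence the ambient left mutation of $\CA_{k+1}$ coincides with the internal one, and the two displayed decompositions of $\CB$ are legitimately the ones produced by the ambient functors $\LL_{\CA_k}$ and $\RR_{\CA_k}$.
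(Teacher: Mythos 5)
Your proof is correct, and it follows exactly the route the paper intends: the paper offers no argument beyond ``it is easy to deduce from Lemma~\ref{mutfun},'' and your grouping of $\CA_k,\CA_{k+1}$ into a block $\CB$, the identification $\CA_{k+1}={}^\perp\CA_k$ inside $\CB$, and the swap via Lemma~\ref{mutfun} followed by refinement is precisely that deduction, carried out in full. Your final paragraph checking that the ambient mutation functor agrees with the one internal to $\CB$ addresses the only point the paper leaves genuinely implicit.
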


There are two cases when the action of the mutation functors is particularly simple.

\begin{lemma}\label{perpmut}
Assume that $\CT = \lan \CA_1,\CA_2,\dots,\CA_n \ran$ is a semiorthogonal decomposition with
all components being admissible. Assume also that the components $\CA_k$ and $\CA_{k+1}$
are completely orthogonal, i.e. $\Hom(\CA_k,\CA_{k+1}) = 0$ as well as $\Hom(\CA_{k+1},\CA_k) = 0$. Then
$$
\LL_{\CA_k}(\CA_{k+1}) = \CA_{k+1}
\qquad\text{and}\qquad
\RR_{\CA_{k+1}}(\CA_k) = \CA_k,
$$
so that both the left mutation of $\CA_{k+1}$ through $\CA_k$ and the right mutation of $\CA_k$ through $\CA_{k+1}$
boil down to just a permutation and
$$
\CT = \lan \CA_1,\dots,\CA_{k-1},\CA_{k+1},\CA_k,\CA_{k+2},\dots,\CA_n \ran
$$
is the resulting semiorthogonal decomposition of $\CT$.
\end{lemma}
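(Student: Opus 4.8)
The plan is to read off both mutations directly from their definitions in Lemma~\ref{mutfun} and to observe that \emph{complete orthogonality} forces the relevant adjoint objects to vanish, so that each mutation functor acts as the identity on the component being moved. The final reordered decomposition is then nothing but an instance of the mutation formula for semiorthogonal decompositions recorded in the corollary preceding this lemma.

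First I would treat the left mutation. Write $i\colon\CA_k\to\CT$ for the embedding; by admissibility it has a right adjoint $i^!$, and $\LL_{\CA_k}(F)=\Cone(ii^!F\to F)$. Fix $F\in\CA_{k+1}$. For every $A\in\CA_k$ the adjunction gives $\Hom_{\CA_k}(A,i^!F)\cong\Hom_\CT(iA,F)$, and the right-hand side vanishes because $iA\in\CA_k$, $F\in\CA_{k+1}$, and $\Hom(\CA_k,\CA_{k+1})=0$. Thus $i^!F$ represents the zero functor on $\CA_k$, whence $i^!F=0$ and $ii^!F=0$; therefore $\LL_{\CA_k}(F)=\Cone(0\to F)=F$, functorially in $F$. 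This establishes $\LL_{\CA_k}(\CA_{k+1})=\CA_{k+1}$.

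The right mutation is entirely symmetric. Writing $j\colon\CA_{k+1}\to\CT$ for the embedding and $j^*$ for its left adjoint, we have $\RR_{\CA_{k+1}}(F)=\Cone(F\to jj^*F)[-1]$. For $F\in\CA_k$ and any $B\in\CA_{k+1}$ the adjunction gives $\Hom_{\CA_{k+1}}(j^*F,B)\cong\Hom_\CT(F,jB)$, which vanishes by the opposite orthogonality $\Hom(\CA_k,\CA_{k+1})=0$ since $F\in\CA_k$ and $jB\in\CA_{k+1}$. Hence $j^*F=0$, $jj^*F=0$, and $\RR_{\CA_{k+1}}(F)=\Cone(F\to 0)[-1]\cong F$, giving $\RR_{\CA_{k+1}}(\CA_k)=\CA_k$. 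Feeding the identification $\LL_{\CA_k}(\CA_{k+1})=\CA_{k+1}$ into the mutation formula for the decomposition $\CT=\lan\CA_1,\dots,\CA_{k-1},\LL_{\CA_k}(\CA_{k+1}),\CA_k,\CA_{k+2},\dots,\CA_n\ran$ then produces the asserted reordered semiorthogonal decomposition.

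I do not expect a genuine obstacle: the content of the lemma is the bookkeeping above. The one point deserving care is the step from vanishing of $\Hom$-spaces to vanishing of the \emph{objects} $i^!F$ and $j^*F$. This is legitimate because $\CA_k$ and $\CA_{k+1}$ are triangulated, hence closed under shifts, so $\Hom(\CA_k,\CA_{k+1})=0$ already encodes the vanishing of all graded $\Hom$-spaces; an object of a triangulated subcategory representing the identically zero functor is, by Yoneda, the zero object, which is exactly what lets the cones collapse to $F$.
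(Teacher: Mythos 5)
Your proof is correct. Note that the paper itself states Lemma~\ref{perpmut} \emph{without} proof, treating it as a standard consequence of Lemma~\ref{mutfun}; your argument supplies exactly the expected reasoning: complete orthogonality forces $i^!F=0$ (resp.\ $j^*F=0$) by adjunction plus Yoneda, so the cones defining $\LL_{\CA_k}$ and $\RR_{\CA_{k+1}}$ collapse to the identity, and the reordered decomposition then follows from the corollary on mutations of semiorthogonal decompositions stated just before the lemma.
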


\begin{lemma}\label{longmut}
Let $X$ be a smooth projective algebraic variety and $\CA \subset \D^b(X)$ an admissible subcategory.
Then
$$
\LL_{\CA^\perp}(\CA) = \CA\otimes\omega_X
\qquad\text{and}\qquad
\RR_{{}^\perp\CA}(\CA) = \CA\otimes\omega_X^{-1}.
$$
\end{lemma}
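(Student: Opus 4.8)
The plan is to reduce the statement to a computation of iterated orthogonal complements, using Lemma~\ref{mutfun} to identify the mutation functors with re-complementation and then Serre duality to evaluate the resulting double orthogonals. First I would record the two semiorthogonal decompositions attached to the admissible subcategory $\CA$, namely $\D^b(X) = \lan \CA^\perp, \CA \ran$ and $\D^b(X) = \lan \CA, {}^\perp\CA \ran$, together with the identities ${}^\perp(\CA^\perp) = \CA$ and $({}^\perp\CA)^\perp = \CA$, which follow by reading off the components of these two-term decompositions (each component being the appropriate orthogonal of the other). Applying Lemma~\ref{mutfun} to the admissible subcategory $\CB = \CA^\perp$ produces the decompositions $\D^b(X) = \lan (\CA^\perp)^\perp, \CA^\perp \ran$ and $\D^b(X) = \lan \CA^\perp, {}^\perp(\CA^\perp) \ran = \lan \CA^\perp, \CA \ran$, and asserts that $\LL_{\CA^\perp}$ restricts to an equivalence ${}^\perp(\CA^\perp) \to (\CA^\perp)^\perp$. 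Since ${}^\perp(\CA^\perp) = \CA$, this identifies the image subcategory $\LL_{\CA^\perp}(\CA)$ with $(\CA^\perp)^\perp$; symmetrically, taking $\CB = {}^\perp\CA$ identifies $\RR_{{}^\perp\CA}(\CA)$ with ${}^\perp({}^\perp\CA)$.

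The heart of the argument is then to compute these double orthogonals by means of the Serre functor $\SS(F) = F\otimes\omega_X[\dim X]$ on $\D^b(X)$. From the bifunctorial isomorphism $\Hom(F,A) \cong \Hom(A,\SS F)^\vee$ one obtains, for any admissible $\CC$, that $F \in {}^\perp\CC$ if and only if $\SS F \in \CC^\perp$; in other words $\SS({}^\perp\CC) = \CC^\perp$. Applying this with $\CC = \CA^\perp$ and using ${}^\perp(\CA^\perp) = \CA$ gives $\SS(\CA) = (\CA^\perp)^\perp$, while applying it with $\CC = {}^\perp\CA$ and using $({}^\perp\CA)^\perp = \CA$ gives $\SS({}^\perp({}^\perp\CA)) = \CA$, i.e.\ ${}^\perp({}^\perp\CA) = \SS^{-1}(\CA)$. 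Combining these with the identifications of the previous paragraph yields $\LL_{\CA^\perp}(\CA) = \SS(\CA)$ and $\RR_{{}^\perp\CA}(\CA) = \SS^{-1}(\CA)$. Finally, since $\SS$ differs from the twist $\TT_{\omega_X}$ only by the shift $[\dim X]$, which preserves any triangulated subcategory, one has $\SS(\CA) = \CA\otimes\omega_X$ and $\SS^{-1}(\CA) = \CA\otimes\omega_X^{-1}$ as subcategories, which is exactly the assertion.

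The main obstacle is the verification of the Serre-duality identity $\SS({}^\perp\CC) = \CC^\perp$ at the level of subcategories, together with the double-orthogonal identities ${}^\perp(\CA^\perp) = \CA$ and $({}^\perp\CA)^\perp = \CA$ on which the reduction rests. Both depend on $\D^b(X)$ being \emph{saturated}, which is guaranteed here by the smoothness and projectivity of $X$; this ensures that $\CA^\perp$ and ${}^\perp\CA$ are themselves admissible, so that the two-term decompositions used above genuinely exist and Lemma~\ref{mutfun} applies to them. Once admissibility is secured, each of these identities is a one-line consequence of semiorthogonality and the defining property of $\SS$, and the proof assembles as above.
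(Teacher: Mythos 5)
Your proof is correct, but there is nothing in the paper to compare it against: the paper states Lemma~\ref{longmut} without proof, treating it as a known fact, and the remark immediately following it only observes that the analogue holds in any triangulated category with a Serre functor, citing~\cite{BK}. What you have written is precisely the standard proof of that general statement, specialized to $\D^b(X)$: first identify $\LL_{\CA^\perp}(\CA) = (\CA^\perp)^\perp$ and $\RR_{{}^\perp\CA}(\CA) = {}^\perp({}^\perp\CA)$ by applying Lemma~\ref{mutfun} to the admissible subcategories $\CA^\perp$ and ${}^\perp\CA$, and then compute the double orthogonals via the Serre-duality identity $\SS({}^\perp\CC) = \CC^\perp$. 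The two prerequisites you flag are exactly the right ones: admissibility of $\CA^\perp$ and ${}^\perp\CA$, which follows from Lemma~\ref{sos_sod}(ii) (i.e.\ from saturatedness of $\D^b(X)$ for $X$ smooth and projective), and the identities ${}^\perp(\CA^\perp) = \CA = ({}^\perp\CA)^\perp$, which are read off from the two semiorthogonal decompositions attached to $\CA$; your handling of both is sound. Note also that your argument actually establishes the more general claim of the paper's remark --- for any triangulated category with a Serre functor, with $\CA\otimes\omega_X$ replaced by $\SS(\CA)$ --- the shift $[\dim X]$ being immaterial since triangulated subcategories are closed under shifts.
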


An analogue of this Lemma holds for any triangulated category which has a Serre functor (see~\cite{BK}).
In this case tensoring by the canonical class should be replaced by the action of the Serre functor.
Whenever such mutation is performed we will say that
the component $\CA$ of a semiorthogonal decomposition $\D^b(X) = \langle \CA^\perp,\CA \rangle$
is {\sf translated to the left}
(respectively the component $\CA$ of a semiorthogonal decomposition $\D^b(X) = \langle \CA,{}^\perp\CA \rangle$
is {\sf translated to the right}).

We will also need the following evident observation.

\begin{lemma}\label{tensmut}
Let $\Phi$ be an autoequivalence of $\CT$. Then
$$
\Phi\circ\RR_\CA \cong \RR_{\Phi(\CA)}\circ\Phi,
\qquad
\Phi\circ\LL_\CA \cong \LL_{\Phi(\CA)}\circ\Phi.
$$
In particular, if $L$ is a line bundle on $X$
and $E$ is an exceptional object in $\D^b(X)$ then
$$
\TT_L\circ\RR_E \cong \RR_{E\otimes L}\circ\TT_L,
\qquad
\TT_L\circ\LL_E \cong \LL_{E\otimes L}\circ\TT_L,
$$
where $\TT_L:\D^b(X) \to \D^b(X)$ is the functor of tensor product by $L$.
\end{lemma}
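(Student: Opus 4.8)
The plan is to prove the two isomorphisms abstractly, using only the defining formulas for the mutation functors from Lemma~\ref{mutfun} together with the fact that an autoequivalence commutes with the formation of cones and with all adjunctions. The whole point of Lemma~\ref{tensmut} is that mutations are built canonically out of the category structure, so any autoequivalence must intertwine them with the mutations of the transported subcategory.

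First I would set up the adjunction bookkeeping. Let $i:\CA \to \CT$ be the embedding and let $j:\Phi(\CA) \to \CT$ be the embedding of the image subcategory $\Phi(\CA)$, which is again admissible since $\Phi$ is an equivalence. The key observation is that $\Phi \circ i$ and $j \circ (\Phi|_\CA)$ are isomorphic functors $\CA \to \CT$, where $\Phi|_\CA:\CA \to \Phi(\CA)$ is the induced equivalence. Taking right adjoints of both sides and using that $\Phi$ is an equivalence (so $\Phi^{-1}$ is right adjoint to $\Phi$, and $(\Phi|_\CA)^{-1}$ is right adjoint to $\Phi|_\CA$), I would deduce the base-change identity $i^! \circ \Phi^{-1} \cong (\Phi|_\CA)^{-1} \circ j^!$, equivalently $\Phi \circ i \circ i^! \cong j \circ j^! \circ \Phi$ after composing with $\Phi$ on the left and $\Phi^{-1}$ on the right and rearranging. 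The analogous computation with left adjoints yields $\Phi \circ i \circ i^* \cong j \circ j^* \circ \Phi$.

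Next I would apply $\Phi$ to the defining triangles. For any $F \in \CT$, Lemma~\ref{mutfun} gives the functorial triangle $i i^! F \to F \to \LL_\CA(F)$. Applying the exact functor $\Phi$ and using $\Phi(ii^!F) \cong jj^!(\Phi F)$ from the previous step, I obtain a triangle $jj^!(\Phi F) \to \Phi F \to \Phi(\LL_\CA(F))$. But the defining triangle for $\LL_{\Phi(\CA)}$ applied to $\Phi F$ reads $jj^!(\Phi F) \to \Phi F \to \LL_{\Phi(\CA)}(\Phi F)$, with the same first two terms and the same first map (here functoriality of the cone, guaranteed by semiorthogonality as noted after Lemma~\ref{mutfun}, is what lets me identify the maps, not merely the objects). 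Hence the two third terms are canonically isomorphic, giving $\Phi \circ \LL_\CA \cong \LL_{\Phi(\CA)} \circ \Phi$. The right-mutation statement follows by the identical argument applied to the triangle $\RR_\CA(F) \to F \to ii^*F$ together with $\Phi(ii^*F) \cong jj^*(\Phi F)$. The special case follows by taking $\Phi = \TT_L$, which is an autoequivalence with $\TT_L(\CA) = \CA\otimes L$; for an exceptional object $E$ one has $\TT_L(\langle E\rangle) = \langle E\otimes L\rangle$, and tensoring by a line bundle sends the triangles~\eqref{excmut} to the corresponding triangles for $E\otimes L$, since $\Ext^\bullet(E,F) \cong \Ext^\bullet(E\otimes L, F\otimes L)$.

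\emph{The main obstacle} will be the careful handling of the maps, not the objects, when identifying the two mutation triangles: one must check that $\Phi$ carries the counit $ii^!F \to F$ to the counit $jj^!(\Phi F) \to \Phi F$ under the adjunction isomorphism, so that the cones agree as functors rather than merely objectwise. This is exactly where the compatibility of $\Phi$ with the adjunctions established in the first step is used, and where the functoriality of the mutation cone (from semiorthogonality) is essential. Everything else is formal manipulation of triangles and adjunctions, so I expect no further difficulty.
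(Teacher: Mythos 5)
Your proof is correct. The paper gives no proof of this lemma at all --- it is introduced as ``the following evident observation'' --- and your argument (transporting the adjunctions along $\Phi$ by uniqueness of adjoints, then comparing the defining triangles of Lemma~\ref{mutfun}, with the unit/counit compatibility guaranteeing that the two cones agree as functors and not merely objectwise) is exactly the standard verification that the paper leaves to the reader.
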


\section{Derived category of a quartic double solid}\label{s-qds}

In this section we discuss the structure of the derived categories of quartic double solids
(smooth and nodal) and their resolutions of singularities.

\subsection{Smooth double covering}

We start with the following general result.

\begin{lemma}\label{pbec}
Let $f:X \to Y$ be a $2$-fold covering ramified in a divisor $D \subset Y$.
Assume that for some $H \in \Pic Y$ we have $D \sim 2H \sim -K_Y$.
Assume that $\CE_1,\dots,\CE_n$ is an exceptional collection in $\D^b(Y)$ such that
the collection
\begin{equation}\label{col1}
\CE_1(-H),\dots,\CE_n(-H),\CE_1,\dots,\CE_n
\end{equation}
is also exceptional. Then the collection
$$
f^*\CE_1,\dots,f^*\CE_n \in \D^b(X)
$$
is exceptional in $\D^b(X)$.
\end{lemma}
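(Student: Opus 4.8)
The plan is to reduce every $\Ext$-computation on $X$ to a pair of $\Ext$-computations on $Y$, using the standard structure of the pushforward of a double covering. First I would record that, since $f$ is a finite flat double covering ramified in $D$ with $D \sim 2H$, the pushforward of the structure sheaf splits as a sheaf of $\CO_Y$-modules,
$$
f_*\CO_X \;\cong\; \CO_Y \oplus \CO_Y(-H),
$$
the second summand being the $(-1)$-eigensheaf of the covering involution. Because $f$ is finite it is both affine and proper and, being flat, has $\Tor$-dimension $0$, so that $f_*$ and $f^*$ are exact and the projection formula $f_*f^*B \cong B \otimes f_*\CO_X$ holds in $\D^b(Y)$ with no derived subtleties. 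Combining this with the adjunction $f^* \dashv f_*$, I obtain for all $A, B \in \D^b(Y)$ and all $t \in \ZZ$ a natural isomorphism
$$
\Ext^t_X(f^*A, f^*B) \;\cong\; \Hom_Y\big(A, f_*f^*B[t]\big) \;\cong\; \Ext^t_Y(A, B) \oplus \Ext^t_Y\big(A, B(-H)\big).
$$

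With this formula in hand, the two defining properties of an exceptional collection become pure bookkeeping against the hypothesis that~(\ref{col1}) is exceptional. For the exceptionality of a single object I would set $A = B = \CE_i$: the first summand $\Ext^\bullet_Y(\CE_i,\CE_i)$ is $\C$ concentrated in degree $0$ since $\CE_i$ is exceptional, while the second summand $\Ext^\bullet_Y(\CE_i,\CE_i(-H))$ vanishes, because in~(\ref{col1}) the object $\CE_i$ stands strictly to the right of $\CE_i(-H)$. Hence each $f^*\CE_i$ is exceptional. For semiorthogonality I would take $A = \CE_l$, $B = \CE_k$ with $l > k$: the first summand $\Ext^\bullet_Y(\CE_l,\CE_k)$ vanishes because $\CE_1,\dots,\CE_n$ is exceptional, and the second summand $\Ext^\bullet_Y(\CE_l,\CE_k(-H))$ vanishes because $\CE_l$ lies to the right of the entire first half of~(\ref{col1}), in particular to the right of $\CE_k(-H)$. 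Thus $\Ext^\bullet_X(f^*\CE_l, f^*\CE_k) = 0$ for $l > k$, which completes the verification.

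I do not expect a genuine obstacle here: the whole content is that the ordering in~(\ref{col1}), with all the $(-H)$-twists placed before the untwisted objects, is calibrated precisely so that each extra twisted group $\Ext^\bullet_Y(\CE_l,\CE_k(-H))$ produced by the pullback is one in which the untwisted object is the later one, and is therefore killed by the exceptionality of~(\ref{col1}). The only steps requiring a little care are the justification of the splitting of $f_*\CO_X$ and of the projection formula at the level of derived categories. It is worth noting that only $D \sim 2H$ is actually used; the further hypothesis $2H \sim -K_Y$ plays no role in this particular lemma and is recorded for the geometric applications, where it enters through Serre-duality considerations.
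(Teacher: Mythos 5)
Your proof is correct and is essentially identical to the paper's: both split $f_*\CO_X \cong \CO_Y \oplus \CO_Y(-H)$, apply adjunction and the projection formula to get $\Ext^\bullet(f^*\CE_i,f^*\CE_j) \cong \Ext^\bullet(\CE_i,\CE_j) \oplus \Ext^\bullet(\CE_i,\CE_j(-H))$, and then kill each summand using the exceptionality of the collection~\eqref{col1}. Your closing remark that only $D \sim 2H$ is used (with $2H \sim -K_Y$ reserved for the Serre-functor statement in Proposition~\ref{serre}) is also accurate.
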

\begin{proof}
Note that $X = \Spec_Y(\CO_Y \oplus \CO_Y(-H))$, hence $f_*\CO_X \cong \CO_Y \oplus \CO_Y(-H)$.
Now we have
$$
\Ext^\bullet(f^*\CE_i,f^*\CE_j) =
\Ext^\bullet(\CE_i,f_*f^*\CE_j) =
\Ext^\bullet(\CE_i,\CE_j\otimes (\CO_Y \oplus \CO_Y(-H))) =
\Ext^\bullet(\CE_i,\CE_j) \oplus \Ext^\bullet(\CE_i,\CE_j(-H))).
$$
Note that the second summand vanishes for all $i,j$ and the first summand vanishes
for $i > j$ since the collection~\eqref{col1} is exceptional.
Moreover, for $i = j$ we obtain exceptionality of $f^*\CE_i$.
\end{proof}

Now once we have constructed an exceptional collection in $\D^b(X)$, we can extend
it to a semiorthogonal decomposition. It turns out that the additional component has
a very interesting property.

\begin{proposition}\label{serre}
Assume that the conditions of Lemma~{\rm\ref{pbec}} are satisfied.
Denote the orthogonal to the exceptional collection of the Lemma by
$\CA_X = \langle f^*\CE_1, \dots, f^*\CE_n \rangle^\perp$.
Then
$$
\D^b(X) = \langle \CA_X, f^*\CE_1, \dots, f^*\CE_n \rangle
$$
is a semiorthogonal decomposition. Moreover,
if collection~\eqref{col1} is full in $\D^b(Y)$ then
the Serre functor $\SS_{\CA_X}$ of the category $\CA_X$ is given by
$$
\SS_{\CA_X} \cong \tau[\dim X - 1],
$$
where $\tau$ is the involution of the double covering $f:X \to Y$.
\end{proposition}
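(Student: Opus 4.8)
The plan is to reduce the computation of $\SS_{\CA_X}$ to a mutation calculation and then feed in the geometry of the double covering. First I would record that $\omega_X\cong f^*\CO_Y(-H)$: the reduced ramification $R:=f^{-1}(D)_{\mathrm{red}}$ satisfies $\CO_X(R)\cong f^*\CO_Y(H)$ and $K_X = f^*K_Y + R$, so with $K_Y\sim -2H$ one gets $K_X\sim -f^*H$. Hence $\SS_X\cong\TT_{\omega_X}[\dim X]$ with $\omega_X = f^*\CO_Y(-H)$, and by Lemma~\ref{mut_funct}, $\SS_{\CA_X}\cong i^!\circ\SS_X\circ i$ for the inclusion $i:\CA_X\to\D^b(X)$. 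Next I would identify the orthogonal $\CA_X^\perp$. Translating $\CA_X$ to the right of $\D^b(X)=\langle\CA_X, f^*\CE_1,\dots,f^*\CE_n\rangle$ via Lemma~\ref{longmut} and then twisting by $\omega_X$ (so that $f^*\CE_i\otimes\omega_X=f^*(\CE_i(-H))$) gives
\[
\CA_X^\perp = \langle f^*(\CE_1(-H)),\dots,f^*(\CE_n(-H))\rangle .
\]
By the description of the projection functors in Lemma~\ref{mutfun}, the right adjoint $i^!$ onto $\CA_X$ (rightmost in $\D^b(X)=\langle\CA_X^\perp,\CA_X\rangle$) coincides with the right mutation $\RR_{\CA_X^\perp}$, so the whole problem becomes the computation of $\RR_{\CA_X^\perp}(A\otimes\omega_X)$ for $A\in\CA_X$.

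The engine of the proof is the left mutation $\LL_{\CA_X^\perp}$, inverse to $\RR_{\CA_X^\perp}$ between $\CA_X$ and $(\CA_X^\perp)^\perp$ (Lemma~\ref{mutfun}). I would first translate the hypotheses to $Y$: since $\Hom^\bullet(f^*\CE_k, A)=\Hom^\bullet(\CE_k, f_*A)$, membership $A\in\CA_X$ is equivalent to $f_*A\in\langle\CE_1(-H),\dots,\CE_n(-H)\rangle$, the left block of the full collection~\eqref{col1}. Combined with the exceptionality of~\eqref{col1} (which forces $\Hom^\bullet(\CE_k,\CE_l(-H))=0$), a projection-formula computation shows $f^*f_*A\in\CA_X^\perp$ and that the counit $\epsilon:f^*f_*A\to A$ exhibits $f^*f_*A$ as the $\CA_X^\perp$-projection of $A$ (the cone lies in $(\CA_X^\perp)^\perp$); consequently $\LL_{\CA_X^\perp}(A)\cong\Cone(\epsilon)$. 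The involution now enters through the standard description of $X\times_Y X$ as the union of the diagonal and the graph of $\tau$, meeting along $R$; pushing its Mayer--Vietoris sequence forward yields the fundamental triangle
\[
f^*f_*A\exto{(\epsilon,\,\epsilon_\tau)} A\oplus\tau^*A\exto{\rho-\tau^*\rho} A\otimes\CO_R\to f^*f_*A[1],
\]
where $\rho$ denotes restriction to $R$. Composing with the projection $A\oplus\tau^*A\to A$ and applying the octahedral axiom identifies $\Cone(\epsilon)$ with the shifted cone of the component $\tau^*A\exto{\tau^*\rho}A\otimes\CO_R$; by the Koszul triangle $\tau^*A\otimes\omega_X\to\tau^*A\exto{\tau^*\rho}A\otimes\CO_R$ (using $\omega_X\cong\CO_X(-R)$ and $\tau|_R=\mathrm{id}$) this gives $\LL_{\CA_X^\perp}(A)\cong\tau^*A\otimes\omega_X[1]$.

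Finally I would invert this. One checks $A\otimes\omega_X\in(\CA_X^\perp)^\perp$ for $A\in\CA_X$, since $\Hom^\bullet(f^*(\CE_k(-H)), A\otimes\omega_X)\cong\Hom^\bullet(f^*\CE_k, A)=0$; hence $\RR_{\CA_X^\perp}(A\otimes\omega_X)$ is the unique object of $\CA_X$ mapped by $\LL_{\CA_X^\perp}$ to $A\otimes\omega_X$. Solving $\tau^*A'\otimes\omega_X[1]\cong A\otimes\omega_X$ yields $A'\cong\tau^*A[-1]$, so that
\[
\SS_{\CA_X}(A)\cong\RR_{\CA_X^\perp}(A\otimes\omega_X)[\dim X]\cong\tau^*A[\dim X-1],
\]
and, since every isomorphism above is natural in $A$, this upgrades to $\SS_{\CA_X}\cong\tau^*[\dim X-1]$ (note $\tau^*$ preserves $\CA_X$ because $\tau^*f^*=f^*$, so the right-hand side is indeed an autoequivalence of $\CA_X$). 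The main obstacle is the middle step: producing the fundamental triangle with the correct maps and verifying through the octahedron that the cone is the \emph{twisted} object $\tau^*A\otimes\omega_X[1]$ rather than an untwisted $A\otimes\omega_X[1]$. This is exactly where the involution $\tau$, and not merely the $2{:}1$ nature of $f$, is used, and it is what forces $\tau$ to appear in the Serre functor.
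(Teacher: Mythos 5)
Your proof is correct, but it takes a genuinely different route from the paper's, even though both consume the same geometric inputs. The paper's proof is a direct verification of the duality pairing: for $\CF_1,\CF_2\in\CA_X$, Serre duality on $X$ (with $\omega_X\cong f^*\CO_Y(-H)$) gives $\Hom(\CF_1,\tau\CF_2[\dim X-1])^\vee\cong\Hom(\CF_2,\tau\CF_1(-H)[1])$, and the canonical triangle $f^*f_*\CF_1\to\CF_1\to\tau\CF_1(-H)[1]$ --- the very triangle you construct from $X\times_Y X=\Delta\cup\Gamma_\tau$, but which the paper simply asserts as easy to see --- reduces everything to the vanishing $\Hom(\CF_2,f^*f_*\CF_1)=0$; this is then proved from the adjunction $f^!\cong f^*\circ\TT_{\CO(H)}$, which gives $\Hom(\CF_2,f^*f_*\CF_1)\cong\Hom(f_*\CF_2(H),f_*\CF_1)$, together with fullness of~\eqref{col1}: $f_*\CF_1\in\langle\CE_1(-H),\dots,\CE_n(-H)\rangle$, $f_*\CF_2(H)\in\langle\CE_1,\dots,\CE_n\rangle$, and these two blocks are semiorthogonal. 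You instead compute $\SS_{\CA_X}=i^!\circ\SS_X\circ i$ (Lemma~\ref{mut_funct}) as an explicit mutation: you identify $\CA_X^\perp=\langle f^*(\CE_1(-H)),\dots,f^*(\CE_n(-H))\rangle$, recognize $i^!$ as $\RR_{\CA_X^\perp}$, establish $\LL_{\CA_X^\perp}|_{\CA_X}\cong\tau^*(-)\otimes\omega_X[1]$ from the counit triangle, and invert via Lemma~\ref{mutfun}; your use of fullness (to place $f^*f_*A$ in $\CA_X^\perp$ and make the counit compute the projection) is exactly the counterpart of the paper's vanishing argument. The trade-off: the paper's argument is shorter, since it never needs to identify $\CA_X^\perp$ nor to check that the counit realizes the projection onto it, while yours yields strictly more --- explicit formulas for the mutation functors restricted to $\CA_X$, and an honest derivation of the key triangle. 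Two small points to tidy up. First, your claim that $\Cone(\epsilon)$ lies in $(\CA_X^\perp)^\perp$ is most cleanly checked \emph{after} the Mayer--Vietoris identification $\Cone(\epsilon)\cong\tau^*A\otimes\omega_X[1]$, where it becomes $\Hom(f^*(\CE_k(-H)),\tau^*A\otimes\omega_X)\cong\Hom(f^*\CE_k,\tau^*A)\cong\Hom(f^*\CE_k,A)=0$ using $\tau^*f^*=f^*$; as written you invoke it before identifying the cone, so either reorder the two steps or spell out the projection-formula computation (it does go through: the component of $\epsilon$ on the surviving summand of $f_*f^*f_*A\cong f_*A\oplus f_*A(-H)$ is the identity by the triangle identity of the adjunction). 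Second, you omit the first assertion of the Proposition, the existence of the semiorthogonal decomposition itself; it is a one-liner from Lemma~\ref{pbec} and Corollary~\ref{sodgen}, exactly as in the paper, but it should be stated.
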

\begin{proof}
The first part follows from Corollary~\ref{sodgen} and Lemma~\ref{pbec}.
So, the only thing we have to compute is the Serre functor. For this
we have to check that for any $\CF_1,\CF_2 \in \CA_X$ we have
$$
\Hom(\CF_1,\tau\CF_2[\dim X - 1])^\vee \cong \Hom(\CF_2,\CF_1),
$$
a bifunctorial isomorphism.
For this we start with the Serre duality on $X$. Note that $K_X = -H$, hence
$$
\Hom(\CF_1,\tau\CF_2[\dim X - 1])^\vee \cong \Hom(\tau\CF_2,\CF_1(-H)[1]) \cong \Hom(\CF_2,\tau\CF_1(-H)[1]).
$$
On the other hand, it is easy to see that for each $\CF_1$ we have a canonical distinguished triangle
$$
f^*f_*\CF_1 \to \CF_1 \to \tau\CF_1(-H)[1].
$$
So, to prove the claim it suffices to check that $\Hom(\CF_2,f^*f_*\CF_1) = 0$ for all $\CF_1,\CF_2 \in \CA_X$.
But since $K_{X/Y} = \CO(H)$ we have $f^! \cong f^*\circ\TT_{\CO(H)}$, so
$$
\Hom(\CF_2,f^*f_*\CF_1) \cong
\Hom(\CF_2(H),f^!f_*\CF_1) \cong
\Hom(f_*\CF_2(H),f_*\CF_1).
$$
By definition of $\CA_X$ we have $\Hom(\CE_i,f_*\CF_j) = 0$ for all $i,j$, hence
$$
f_*\CF_1 \in \langle \CE_1,\dots,\CE_n \rangle^\perp = \langle \CE_1(-H),\dots,\CE_n(-H) \rangle
$$
because~\eqref{col1} is full. By the same reason $f_*\CF_2(H) \in \langle \CE_1,\dots,\CE_n \rangle$.
The latter two subcategories are semiorthogonal, hence the $\Hom$ in question vanishes.
\end{proof}

\begin{remark}
This behavior of the Serre functor of $\CA_X$ has a nice generalization, see~\cite{K09}.
One can take $X$ to be a double covering of (or a hypersurface in) arbitrary smooth projective variety $Y$
which has a rectangular Lefschetz decomposition. Then analogously defined subcategory $\CA_X$ of $\D^b(X)$
is a fractional Calabi--Yau category.
\end{remark}

Now we apply this result to a quartic double solid. Recall that a quartic double solid
is a double covering of $\PP^3$ ramified in a quartic. Denote by $h$ the positive generator
of $\Pic\PP^3$ as well as its pullback to the double covering. The standard exceptional collection
$\CO(-3h),\CO(-2h),\CO(-h),\CO$ on $\PP^3$ has the form~\eqref{col1} for $H = 2h$, hence we obtain the following

\begin{corollary}
Let $D \subset \PP^3$ be a quartic surface and $f:X \to \PP^3$ --- the associated double covering.
Then $(\CO_X(-h),\CO_X)$ is an exceptional pair and we have a semiorthogonal decomposition
$$
\D^b(X) = \langle \CA_X, \CO_X(-h), \CO_X \rangle,
$$
where $\CA_X = \langle \CO_X(-h), \CO_X \rangle^\perp$. Moreover, $\SS_{\CA_X} \cong \tau[2]$.
\end{corollary}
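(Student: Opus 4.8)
The plan is to recognize this statement as the specialization of Proposition~\ref{serre} to the case $Y = \PP^3$ and $H = 2h$, and to verify that all the hypotheses of Lemma~\ref{pbec} and of the ``moreover'' part of Proposition~\ref{serre} are met for a quartic double solid. Since essentially all the real work has already been done in those two results, the task reduces to fixing the numerical data correctly and identifying the exceptional collection~\eqref{col1} with a standard one.

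First I would check the numerical conditions. As $D$ is a quartic surface we have $D \sim 4h$, while $K_{\PP^3} = -4h$; thus setting $H := 2h$ gives $D \sim 4h \sim 2H$ together with $-K_{\PP^3} = 4h = 2H$, so the relation $D \sim 2H \sim -K_Y$ demanded by Lemma~\ref{pbec} holds. Next I would exhibit the collection: take $\CE_1 = \CO_{\PP^3}(-h)$ and $\CE_2 = \CO_{\PP^3}$, so that the associated collection~\eqref{col1} is
$$
\CE_1(-H),\ \CE_2(-H),\ \CE_1,\ \CE_2 \ =\ \CO(-3h),\ \CO(-2h),\ \CO(-h),\ \CO,
$$
which is precisely the standard Beilinson exceptional collection on $\PP^3$. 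In particular it is exceptional, so the hypotheses of Lemma~\ref{pbec} are satisfied, and the Lemma yields that $(f^*\CE_1, f^*\CE_2) = (\CO_X(-h), \CO_X)$ is an exceptional pair in $\D^b(X)$ (using that $h$ denotes both the generator on $\PP^3$ and its pullback). The first part of Proposition~\ref{serre}, applied to this pair, then produces the semiorthogonal decomposition $\D^b(X) = \langle \CA_X, \CO_X(-h), \CO_X \rangle$ with $\CA_X = \langle \CO_X(-h), \CO_X \rangle^\perp$.

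Finally I would treat the Serre functor. The ``moreover'' clause of Proposition~\ref{serre} requires collection~\eqref{col1} to be \emph{full} in $\D^b(\PP^3)$, and for the displayed Beilinson collection this is exactly Beilinson's classical theorem. Since $X$ is a double cover of $\PP^3$ it is a threefold, so $\dim X = 3$, and Proposition~\ref{serre} gives $\SS_{\CA_X} \cong \tau[\dim X - 1] = \tau[2]$, as claimed. I do not anticipate a genuine obstacle here: the only inputs beyond routine bookkeeping are the exceptionality and the fullness of the Beilinson collection, both of which are standard, and everything else is a direct substitution into the results already established.
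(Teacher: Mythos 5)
Your proposal is correct and follows exactly the paper's own route: the paper derives this corollary by observing that the standard collection $\CO(-3h),\CO(-2h),\CO(-h),\CO$ on $\PP^3$ is of the form~\eqref{col1} with $\CE_1 = \CO(-h)$, $\CE_2 = \CO$, $H = 2h$, and then invoking Lemma~\ref{pbec} and Proposition~\ref{serre}. Your write-up just makes explicit the numerical check $D \sim 2H \sim -K_{\PP^3}$ and the fullness of the Beilinson collection, which the paper leaves implicit.
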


Note that the behavior of the Serre functor of $\CA_X$ is analogous to that of an Enriques surface.
We are going to show that this is not just a coincidence. But first of all we are going to extend
the construction of subcategory $\CA_X$ to nodal quartic double solids.

\subsection{Nodal case, big resolution}

Now let $D \subset \PP^3$ be a quartic surface with a finite number $N$ of ordinary double points $y_1,\dots,y_N \in \PP^3$.
Then the corresponding double covering $f:X \to \PP^3$ also has $N$ ordinary double points, $x_1 = f^{-1}(y_1)$, \dots, $x_N = f^{-1}(y_N)$.
Therefore, the category $\CA_X$ in this case is singular. If we want to replace it by a smooth category, we have two possibilities.
The first, is to replace the singular threefold $X$ by its blowup in points $x_1,\dots,x_N$,
$$
X' = \Bl_{x_1,\dots,x_N} X.
$$
It is easy to see that $X'$ can also be represented as a double covering.

\begin{lemma}\label{xprime}
Let
$$
Y = \Bl_{y_1,\dots,y_N}\PP^3
$$
be the blowup of $\PP^3$ in points $y_1,\dots,y_N$ and $D' \subset Y$ the proper preimage of $D$.
Then $X'$ is the double covering of $Y$ ramified in $D'$.
\end{lemma}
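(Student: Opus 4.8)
The plan is to realize the candidate double covering as an explicit object over $Y$ and to identify it with $X'=\Bl_{x_1,\dots,x_N}X$, the whole matter being local over the nodes. Recall first that a double covering $f\colon X\to\PP^3$ ramified in $D\sim 4h$ is recovered as $X=\Spec_{\PP^3}(\CO\oplus\CO(-2h))$, where the $\CO(-2h)$-summand is the $(-1)$-eigenspace of the involution and the algebra structure is the multiplication $\CO(-2h)\otimes\CO(-2h)=\CO(-4h)\xrightarrow{\,s_D\,}\CO$ given by the quartic equation $s_D\in H^0(\PP^3,\CO(4h))$ of $D$. On the base side, write $\pi\colon Y\to\PP^3$ and let $E=\sum_i E_i$ be the total exceptional divisor. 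Since $D$ has an ordinary double point at each $y_i$, a local computation in affine coordinates shows that the proper preimage satisfies $\pi^*D=D'+2E$, that $D'$ is smooth (its intersection with each $E_i\cong\PP^2$ is the smooth conic cut out by the tangent cone of $D$ at $y_i$), and hence that $\CO_Y(D')\cong L^{\otimes 2}$ with $L:=\pi^*\CO(2h)\otimes\CO_Y(-E)$. As $\Pic Y$ is torsion free, $L$ is the unique square root of $\CO_Y(D')$, so the double covering $\tilde X:=\Spec_Y(\CO_Y\oplus L^{-1})$ ramified in $D'$ is well defined, and it is smooth because $D'$ is smooth.

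Next I would produce a morphism $\tilde X\to X$ over $\pi$. The canonical section $t_E\in H^0(\CO_Y(E))$ gives an inclusion $\pi^*\CO(-2h)\hookrightarrow\pi^*\CO(-2h)\otimes\CO_Y(E)=L^{-1}$, and the factorization $\pi^*s_D=t_E^2\cdot s_{D'}$ coming from $\pi^*D=D'+2E$ shows that this inclusion is compatible with the two multiplications $\pi^*s_D$ and $s_{D'}$. Hence it assembles to a homomorphism of $\CO_Y$-algebras $\pi^*(\CO\oplus\CO(-2h))\to\CO_Y\oplus L^{-1}$, i.e. to a morphism $\tilde X\to Y\times_{\PP^3}X\to X$ lying over $\pi$. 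Away from the nodes $\pi$ is an isomorphism and $t_E$ is invertible, so $\tilde X\to X$ is an isomorphism over $X\setminus\{x_1,\dots,x_N\}$.

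It remains to identify $\tilde X\to X$ with the blowup of the nodes, which is a local question at each $x_i$. In affine coordinates $x_1,x_2,x_3$ centered at $y_i$ write $D=\{g=0\}$ with $g=q(x)+(\text{higher order})$ and $q$ a nondegenerate quadratic form, so that $X=\{w^2=g\}\subset\mathbb{A}^4$ has an ordinary double point at $x_i$. In the chart of $\tilde X$ over $\{x_1\neq0\}\subset E_i$, writing $x_2=x_1u_2$, $x_3=x_1u_3$ and $w=x_1w'$, the equation $w^2=g$ becomes $w'^2=\phi$ with $\phi=q(1,u_2,u_3)+x_1h(x_1,u)$; this is precisely the corresponding chart of $\Bl_{x_i}X$ obtained by the monoidal transform $w=x_1w'$, $x_j=x_1u_j$. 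Thus $\tilde X$ matches $\Bl_{x_i}X$ chart by chart; equivalently, $\CI_{x_i}\cdot\CO_{\tilde X}=(x_1)$ is invertible, so by the universal property of the blowup the morphism factors as $\tilde X\to\Bl_{x_i}X\to X$, and the factoring map is a proper birational morphism of smooth threefolds which is quasi-finite (over $x_i$ both exceptional divisors are the quadric $\PP^1\times\PP^1$, the latter being the double cover of $E_i\cong\PP^2$ branched in the smooth conic $\{q=0\}$), hence an isomorphism by Zariski's main theorem. Assembling the charts over all nodes gives $\tilde X\cong X'=\Bl_{x_1,\dots,x_N}X$ over $X$, compatibly with the projections to $Y$, which is the assertion. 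The main obstacle is exactly this local analysis at the nodes: checking that $X$ acquires an ordinary double point, that $D'$ is smooth with $\pi^*D=D'+2E$, and that the two monoidal transforms agree chart by chart.
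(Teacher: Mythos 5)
Your proof is correct. The paper offers no argument for this lemma at all (its proof reads simply ``Evident''), and your write-up --- the algebra description of the two double covers, the factorization $\pi^*s_D = t_E^2\, s_{D'}$, and the chart-by-chart identification at the nodes, completed via the universal property of the blowup and Zariski's main theorem --- is precisely the routine local verification the authors are leaving to the reader.
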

\begin{proof}
Evident.
\end{proof}

Denote the morphism $X' \to Y$ also by $f$. Denote the exceptional divisors of $Y \to \PP^3$ by $E_1,\dots,E_N$,
and the exceptional divisors of $X' \to X$ by $Q_1,\dots,Q_N$. Each $E_i$ is isomorphic to $\PP^2$ and each $Q_i$
is isomorphic to $\PP^1\times\PP^1$ (since $x_i$ is an ordinary double point). Moreover, $f$ maps $Q_i$ to $E_i$
and is a double covering ramified in a conic.
$$
\xymatrix{
\bigsqcup\limits_{i=1}^N \PP^1\times\PP^1 \ar@{=}[r] &
\bigsqcup\limits_{i=1}^N Q_i \ar[d]_{2:1} \ar@{=}[r] &
X' \ar[rr]^{\Bl_{x_1,\dots,x_N}} \ar[d]_{2:1}^f &&
X \ar[d]_{2:1}^f \\
\bigsqcup\limits_{i=1}^N \PP^2 \ar@{=}[r] &
\bigsqcup\limits_{i=1}^N E_i \ar[r] &
Y \ar[rr]_{\Bl_{y_1,\dots,y_N}} &&
\PP^3
}
$$
The naive replacement of the category $\CA_X$ would be the orthogonal to $\langle \CO_{X'}(-h), \CO_{X'} \rangle$ in $\D^b(X')$.
However, there is a better replacement. Note that the ramification divisor $D'$ lies in the linear system $4h - 2\sum e_i$,
where $e_i$ is the class of the exceptional divisor of $Y$ over the point $y_i \in \PP^3$. Denote one half of it by $H$:
\begin{equation}\label{defH}
H = 2h - \sum e_i.
\end{equation}
Note that
\begin{equation}\label{ky}
K_Y = -2H = 2\sum e_i - 4h.
\end{equation}

\begin{proposition}
Consider the collection of line bundles
$$
(\CE_1,\dots,\CE_{N+2}) = (\CO_X(-h),\{\CO_X(-e_i)\}_{i=1}^{N},\CO_X).
$$
Then the collection $(\CE_1(-H),\dots,\CE_{N+2}(-H),\CE_1,\dots,\CE_{N+2})$ is a full exceptional collection in $D^b(Y)$.
\end{proposition}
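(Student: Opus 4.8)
The plan is to prove the two halves of the statement separately: that the $2(N+2)$ line bundles form an exceptional collection, and that this collection is full. As a consistency check, Orlov's blow-up formula gives $\rank K_0(Y) = 4+2N = 2(N+2)$, so the collection has exactly the expected length, but fullness still has to be established directly. Throughout I will freely use that $\pi\colon Y\to\PP^3$ is the blow-up with disjoint exceptional divisors $E_i\cong\PP^2$, that $\CO_Y(h)|_{E_i}=\CO_{E_i}$ while $\CO_Y(e_i)|_{E_i}=\CO_{E_i}(E_i)=\CO_{\PP^2}(-1)$, and the elementary direct images $\pi_*\CO_Y(mE_i)=\CO_{\PP^3}$ for $0\le m\le 2$ and $\pi_*\CO_Y(-E_i)=\mathcal I_{y_i}$ (all higher direct images vanishing, since the relevant graded pieces $\CO_{\PP^2}(-1),\CO_{\PP^2}(-2)$ are acyclic).

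For exceptionality, since every member is a line bundle it suffices to show $H^\bullet(Y,\CL_q\otimes\CL_p^{-1})=0$ whenever $p>q$ in the chosen order, and $H^\bullet(Y,\CO_Y)=\C$ on the diagonal. Each difference $\CL_q\otimes\CL_p^{-1}$ has the form $\CO_Y(ah+\sum_i c_i e_i)$ with all $c_i\in\{-1,0,1,2\}$, so the direct images above apply term by term and push every such class down to either $\CO_{\PP^3}(a)$ with $a\in\{-1,-2,-3\}$ or to $\mathcal I_{y_i}$. The required vanishings then reduce to $H^\bullet(\PP^3,\CO(a))=0$ for $a\in\{-1,-2,-3\}$ and $H^\bullet(\PP^3,\mathcal I_{y_i})=0$. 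Because the collection is the concatenation of $\sigma=(\CO_Y(-h),\{\CO_Y(-e_i)\}_i,\CO_Y)$ with its twist $\sigma(-H)$, this check splits into the (immediate) exceptionality of $\sigma$ and the cross-terms $\Ext^\bullet(\CL_q,\CL_r(-H))$ with $\CL_q,\CL_r\in\sigma$, where the summand $\sum e_j$ of $-H$ is exactly what pins the $h$-degree into $\{-1,-2,-3\}$; this is a short finite computation.

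For fullness, let $\mathcal D\subseteq\D^b(Y)$ be the subcategory generated by the collection; by Corollary~\ref{sodgen} it is enough to show that every $F$ with $\Ext^\bullet(\CL,F)=0$ for all members $\CL$ vanishes. I would first enrich $\mathcal D$. The structure sequence $0\to\CO_Y(-e_i)\to\CO_Y\to\CO_{E_i}\to 0$ shows $\CO_{E_i}\in\mathcal D$, and $0\to\CO_Y(-e_i-H)\to\CO_Y(-H)\to\CO_{E_i}(-1)\to 0$ (using $\CO_Y(-H)|_{E_i}=\CO_{\PP^2}(-1)$) shows $\CO_{E_i}(-1)\in\mathcal D$, all four outer terms being members of the collection. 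Stripping the exceptional divisors off $\CO_Y(-H)=\CO_Y(-2h+\sum e_j)$ and off $\CO_Y(-h-H)=\CO_Y(-3h+\sum e_j)$ one at a time, with each successive quotient equal to some $\CO_{E_i}(-1)\in\mathcal D$, places $\CO_Y(-2h)$ and $\CO_Y(-3h)$ in $\mathcal D$. Hence $\mathcal D$ contains the pullback block $\pi^*\D^b(\PP^3)=\langle\CO_Y(-3h),\CO_Y(-2h),\CO_Y(-h),\CO_Y\rangle$ together with $\CO_{E_i}$ and $\CO_{E_i}(-1)$ for all $i$.

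Now take $F$ right-orthogonal to the collection. Adjunction $\Hom(\pi^*G,F)=\Hom(G,\pi_*F)$ and $\pi^*\D^b(\PP^3)\subseteq\mathcal D$ give $\pi_*F=0$, so by Orlov's blow-up formula $F$ lies in the kernel of $\pi_*$, which (the $E_i$ being disjoint) is $\bigoplus_i\langle\CO_{E_i}(-1),\CO_{E_i}(-2)\rangle$; write $F=\bigoplus_i F_i$ with $F_i$ a complex on $E_i\cong\PP^2$ generated by $\CO_{\PP^2}(-1),\CO_{\PP^2}(-2)$. Feeding in the remaining vanishings $\Ext^\bullet(\CO_{E_i},F)=\Ext^\bullet(\CO_{E_i}(-1),F)=0$ and the divisorial identity $\iota^!\iota_*B\cong B\oplus(B\otimes\CO_{\PP^2}(-1))[-1]$ for the embedding $\iota\colon E_i\hookrightarrow Y$, these conditions collapse to $\RGamma(\PP^2,F_i(j))=0$ for $j\in\{-1,0,1\}$. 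Since $\Ext^\bullet(\CO_{\PP^2}(-1),F_i)=\RGamma(F_i(1))=0$, the object $F_i$ lies in $\langle\CO_{\PP^2}(-2)\rangle$, so $F_i\cong\CO_{\PP^2}(-2)\otimes W$; but then $\RGamma(F_i(-1))=\RGamma(\CO_{\PP^2}(-3))\otimes W=W[-2]$, whose vanishing forces $W=0$, whence $F=0$. The exceptionality is a routine finite cohomology calculation; the \emph{main obstacle} is the fullness, and within it the delicate point is tracking conventions in Orlov's decomposition, namely confirming that the two sheaves $\CO_{E_i},\CO_{E_i}(-1)$ manufactured inside $\mathcal D$ genuinely detect the kernel component $\langle\CO_{E_i}(-1),\CO_{E_i}(-2)\rangle$ of $\pi_*$ — which is exactly what the closing $\PP^2$ computation via the $\iota^!\iota_*$ formula is arranged to verify.
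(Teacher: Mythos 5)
Your route is genuinely different from the paper's. The paper starts from Orlov's blow-up collection $\langle \CO_Y(-3h),\CO_Y(-2h),\CO_Y(-h),\CO_Y,\{\CO_{E_i}\},\{\CO_{E_i}(1)\}\rangle$ and converts it into the stated collection by an explicit chain of mutations, so exceptionality and fullness are inherited automatically; you instead check exceptionality by direct cohomology computations (this half of your argument is correct, and your bookkeeping of the pushforwards $\pi_*\CO_Y(me_i)$ is accurate) and attack fullness by showing that the subcategory $\mathcal{D}$ generated by the collection contains a generating set of $\D^b(Y)$. Your preparatory steps for fullness are also correct: the stripping argument does place $\CO_Y(-2h)$, $\CO_Y(-3h)$, hence all of $\pi^*\D^b(\PP^3)$, as well as $\CO_{E_i}$ and $\CO_{E_i}(-1)$, inside $\mathcal{D}$, and the deduction $\pi_*F=0$ for $F$ right-orthogonal to $\mathcal{D}$ is fine.

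The gap is in the last step, where you write $F=\bigoplus_i F_i$ ``with $F_i$ a complex on $E_i\cong\PP^2$''. The kernel of $\pi_*$ is indeed generated by $\{\CO_{E_i}(-1),\CO_{E_i}(-2)\}$, but the subcategory $\langle \CO_{E_i}(-2),\CO_{E_i}(-1)\rangle\subset\D^b(Y)$ is \emph{not} contained in the image of $\iota_{i*}:\D^b(E_i)\to\D^b(Y)$. Indeed, $\Ext^1_Y(\CO_{E_i}(-2),\CO_{E_i}(-1))\cong\C$ (the extra class produced by the conormal twist in $\iota^*\iota_*$), whereas $\Ext^1_{E_i}(\CO(-2),\CO(-1))=H^1(\PP^2,\CO(1))=0$; the corresponding nonsplit extension is, concretely, the line bundle $\CO_Y(2E_i)\otimes\CO_{2E_i}$ on the first infinitesimal neighbourhood of $E_i$, which lies in $\Ker\pi_*$ but is annihilated by no power less than $2$ of the ideal of $E_i$, hence is not $\iota_{i*}$ of anything. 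So for a general $F\in\Ker\pi_*$ your identification, and with it the $\iota^!\iota_*$ computation that follows, is unjustified. The gap is repairable within your strategy, but the two-step elimination must be run in $\D^b(Y)$ rather than on $\PP^2$: from $\Hom(\pi^*\D^b(\PP^3),F)=0$ conclude $F\in\langle\{\CO_{E_i}(-2)\}_{i},\{\CO_{E_i}(-1)\}_{i}\rangle$; then $\Hom(\CO_{E_i}(-1),F)=0$ together with $\Ext^\bullet_Y(\CO_{E_i}(-1),\CO_{E_i}(-2))=0$ and $\Ext^\bullet_Y(\CO_{E_i}(-1),\CO_{E_i}(-1))=\C$ kills the $\CO_{E_i}(-1)$-components, leaving $F\cong\bigoplus_i W_i\otimes\CO_{E_i}(-2)$ for graded vector spaces $W_i$; finally $\Ext^\bullet_Y(\CO_{E_i},\CO_{E_i}(-2))\cong\C[-3]\ne 0$, so $\Hom(\CO_{E_i},F)=0$ forces $W_i=0$ and $F=0$. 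Note that this last group is exactly where working on $Y$ matters: the corresponding group on $\PP^2$ is $H^\bullet(\PP^2,\CO(-2))=0$, so no computation carried out purely on $E_i$ could close the argument.
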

\begin{proof}
We start with a standard exceptional collection of a blowup (see~\cite{Or})
$$
\D^b(Y) = \langle \CO_Y(-3h),\CO_Y(-2h),\CO_Y(-h),\CO_Y,\{\CO_{E_i}\}_{i=1}^{N},\{\CO_{E_i}(1)\}_{i=1}^{N} \rangle
$$
and perform a sequence of mutations.

Step 1. Translate the block $\{\CO_{E_i}(1)\}_{i=1}^{N}$ to the left.
Since $(\omega_Y)_{|E_i} \cong \CO_Y(2e_i - 4h)_{|E_i} = \CO_{E_i}(-2)$,
it follows from Lemma~\ref{longmut}
that we will obtain
$$
\D^b(Y) = \langle \{\CO_{E_i}(-1)\}_{i=1}^{N},\CO_Y(-3h),\CO_Y(-2h),\CO_Y(-h),\CO_Y,\{\CO_{E_i}\}_{i=1}^{N} \rangle.
$$

Step 2. Apply the left mutation through the block $\{\CO_{E_i}(-1)\}_{i=1}^{N}$ to $\CO_Y(-3h)$ and $\CO_Y(-2h)$.
Note that
$$
\Ext^t(\CO_{E_i}(-1),\CO_Y(-kh)) =
\Ext^{3-t}(\CO_Y(-kh),\CO_{E_i}(-3))^\vee =
H^{3-t}(E_i,\CO_{E_i}(-3))^\vee =
H^{t-1}(E_i,\CO_{E_i})
$$
(the first equality is the Serre duality on $Y$, in the second we use $\CO_Y(h)_{|E_i} \cong \CO_{E_i}$,
and the third is the Serre duality on $E_i$).
Thus we have one $\Ext^1$ for each $i$. Therefore it is clear that the mutation is given
by the following exact sequence
$$
0 \to \CO_Y(-kh) \to \CO_Y(-kh + \sum_{i=1}^{N} e_i) \to \oplus_{i=1}^{N} \CO_{E_i}(-1) \to 0,
$$
for both $k = 2$ and $k = 3$, that is $\LL_{\langle \{\CO_{E_i}(-1) \}_{i=1}^N \rangle} \CO_Y(-kh) = \CO_Y(-kh + \sum_{i=1}^N e_i)$.
Thus, we obtain
$$
\D^b(Y) = \langle \CO_Y(-3h + \sum_{i=1}^{N} e_i),\CO_Y(-2h + \sum_{i=1}^{N} e_i),\{\CO_{E_i}(-1)\}_{i=1}^{N},\CO_Y(-h),\CO_Y,\{\CO_{E_i}\}_{i=1}^{N} \rangle.
$$

Step 3. Apply the left mutation through $\CO_Y$ to the block $\{\CO_{E_i}\}_{i=1}^{N}$,
and simultaneously the left mutation through $\CO_Y(-2h + \sum_{i=1}^{N} e_i)$
to the block $\{\CO_{E_i}(-1)\}_{i=1}^{N}$.
Note that
$$
\Ext^\bullet(\CO_Y,\CO_{E_i}) = H^\bullet(E_i,\CO_{E_i}),
$$
so we have one $\Hom$ for each $i$. It follows that the mutation of $E_i$
is given by the following exact sequence
$$
0 \to \CO_Y(-e_i) \to \CO_Y \to \CO_{E_i} \to 0,
$$
that is $\LL_{\CO_Y}\CO_{E_i} = \CO_Y(-e_i)$. Hence
$$
\LL_{\CO_Y(-2h + \sum_{i=1}^N e_i)} \CO_{E_i}(-1) = \LL_{\CO_Y(-H)} \CO_{E_i}(-H) = \CO_Y(-H-e_i)
$$
by Lemma~\ref{tensmut}.
So, finally we obtain
\begin{equation}\label{coly}
\D^b(Y) = \langle \CO_Y(-h-H),\{\CO_Y(-e_i-H)\}_{i=1}^{N},\CO_Y(-H),\CO_Y(-h),\{\CO_Y(-e_i)\}_{i=1}^{N},\CO_Y \rangle
\end{equation}
which proves the Proposition.
\end{proof}

Applying Lemma~\ref{pbec} and Proposition~\ref{serre} we obtain the following

\begin{corollary}\label{caxp}
There is a semiorthogonal decomposition
\begin{equation}\label{sodxp}
\D^b(X') = \langle \CA_{X'}, \CO_{X'}(-h),\{\CO_{X'}(-e_i)\}_{i=1}^{N},\CO_{X'} \rangle,
\end{equation}
where $\CA_{X'} = \langle \CO_{X'}(-h),\{\CO_{X'}(-e_i)\}_{i=1}^{N},\CO_{X'} \rangle^\perp$.
Moreover, $\SS_{\CA_{X'}} \cong \tau[2]$.
\end{corollary}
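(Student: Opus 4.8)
The plan is to deduce Corollary~\ref{caxp} as a direct application of Lemma~\ref{pbec} and Proposition~\ref{serre} to the double covering $f : X' \to Y$ of Lemma~\ref{xprime}, with the exceptional collection on $Y$ being the block $(\CE_1,\dots,\CE_{N+2}) = (\CO_Y(-h),\{\CO_Y(-e_i)\}_{i=1}^N,\CO_Y)$ furnished by the preceding Proposition. Since all of the substantive work --- producing the full exceptional collection~\eqref{coly} on $Y$, which has the shape~\eqref{col1} --- is already done, what remains is to check the numerical hypotheses and read off the two conclusions.

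First I would verify the hypotheses of Lemma~\ref{pbec} with $Y = \Bl_{y_1,\dots,y_N}\PP^3$ as base and $X'$ as cover. By Lemma~\ref{xprime}, $f : X' \to Y$ is a $2$-fold covering ramified in the proper preimage $D'$. The required identity $D' \sim 2H \sim -K_Y$ follows from the definition~\eqref{defH} of $H = 2h - \sum e_i$ together with~\eqref{ky}: indeed $D'$ lies in $|4h - 2\sum e_i| = |2H|$, while $K_Y = -2H$. Finally, the preceding Proposition states precisely that $(\CE_1(-H),\dots,\CE_{N+2}(-H),\CE_1,\dots,\CE_{N+2})$, a collection of the form~\eqref{col1}, is exceptional --- and in fact full --- in $\D^b(Y)$. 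Thus both the hypotheses of Lemma~\ref{pbec} and the fullness hypothesis of Proposition~\ref{serre} are in force.

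Applying Proposition~\ref{serre} then gives at once the semiorthogonal decomposition $\D^b(X') = \langle \CA_{X'}, f^*\CE_1,\dots,f^*\CE_{N+2} \rangle$ together with the Serre functor $\SS_{\CA_{X'}} \cong \tau[\dim X' - 1]$, where $\tau$ is the covering involution. It then suffices to make two identifications: that $f^*\CO_Y(-h) = \CO_{X'}(-h)$, $f^*\CO_Y(-e_i) = \CO_{X'}(-e_i)$ and $f^*\CO_Y = \CO_{X'}$ (so that the displayed decomposition is exactly~\eqref{sodxp}), and that $\dim X' = 3$, whence $\tau[\dim X' - 1] = \tau[2]$. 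Note that $X'$, being a double covering of the smooth projective $Y$, is itself smooth and projective, so the Serre-duality argument of Proposition~\ref{serre} applies verbatim.

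The main point to watch is not a genuine obstacle but bookkeeping: confirming on the blowup $Y$ the two numerical conditions $D' \sim 2H$ and $K_Y = -2H$ (this is where~\eqref{defH} and~\eqref{ky} enter), and matching the abstract line bundles $\CE_i$ on $Y$ with their named pullbacks on $X'$. No new geometric or homological input is needed --- the entire content of the corollary is already packaged in the full exceptional collection~\eqref{coly}, and the statement follows formally from Lemma~\ref{pbec} and Proposition~\ref{serre}.
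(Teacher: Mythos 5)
Your proposal is correct and follows exactly the paper's route: the paper derives Corollary~\ref{caxp} precisely by applying Lemma~\ref{pbec} and Proposition~\ref{serre} to the double covering $f:X'\to Y$ of Lemma~\ref{xprime}, using the full exceptional collection~\eqref{coly} established in the preceding Proposition. Your verification of $D'\sim 2H\sim -K_Y$ via~\eqref{defH} and~\eqref{ky}, and the identification $\dim X'-1=2$, are exactly the bookkeeping the paper leaves implicit.
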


\subsection{Nodal case, small resolution}

Another alternative is to replace $X$ by its {\em small resolution of singularities}.
Note that an analytic neighborhood of an ordinary double point $x_i \in X$ is analytically isomorphic
to an analytic neighborhood of the vertex of the 3-dimensional quadratic cone. In particular, it has a resolution
of singularities (actually two resolutions!) with the exceptional locus being $\PP^1$. Choosing
such resolution for each $x_i$ and gluing all of them together, we obtain a smooth analytic manifold $X^+$
with a map $\sigma:X^+ \to X$. This manifold does not admit an algebraic structure in general,
however it is always a Moishezon variety, in particular an algebraic space.
Therefore one can speak about its derived category of coherent sheaves.

Note also that since the construction of the minimal resolution $X^+$ involves a choice
of one of two local resolutions for each of the singular points $x_1,\dots,x_N$, there are
$2^N$ different minimal resolutions. Each of them has a semiorthogonal decomposition
analogous to that of $X$.

\begin{proposition}\label{dbxplus}
There is a semiorthogonal decomposition
$$
\D^b(X^+) = \langle \CA_{X^+}, \CO_{X^+}(-h), \CO_{X^+} \rangle,
$$
where $\CA_{X^+} = \langle \CO_{X^+}(-h), \CO_{X^+} \rangle^\perp$.
\end{proposition}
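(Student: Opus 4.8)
The plan is to reduce the statement to the verification that $(\CO_{X^+}(-h),\CO_{X^+})$ is an exceptional pair and then to invoke Corollary~\ref{sodgen}. The latter applies because $X^+$ is a smooth and proper algebraic space, so that $\D^b(X^+)$ is $\Ext$-finite and the formal apparatus of semiorthogonal decompositions recalled in Section~\ref{s-pre} carries over to it. The key simplification is that all three objects $\CO_{X^+}(-h)$, $\CO_{X^+}$ are pullbacks along the small resolution $\sigma:X^+\to X$ of the corresponding line bundles on $X$, so every $\Ext$-space in question takes the form
$$
\Ext^\bullet_{X^+}(\CO_{X^+}(ah),\CO_{X^+}(bh)) \cong H^\bullet(X^+,\CO_{X^+}((b-a)h)), \qquad a,b\in\{-1,0\}.
$$

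First I would show that $R\sigma_*\CO_{X^+}\cong\CO_X$. Since $\sigma$ contracts a disjoint union of $\PP^1$'s over the ordinary double points $x_1,\dots,x_N$, and ordinary double points are rational singularities, one has $\sigma_*\CO_{X^+}=\CO_X$ by normality and $R^{>0}\sigma_*\CO_{X^+}=0$ by rationality. Combined with the projection formula, $R\sigma_*\sigma^*\CO_X(mh)\cong\CO_X(mh)\otimes R\sigma_*\CO_{X^+}\cong\CO_X(mh)$, this yields
$$
H^\bullet(X^+,\CO_{X^+}(mh)) \cong H^\bullet(X,\CO_X(mh))
$$
for every $m$, and in particular reduces all the relevant $\Ext$-computations from the Moishezon variety $X^+$ to the projective threefold $X$.

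Next I would compute these groups on $X$ itself. As $f:X\to\PP^3$ is the double covering ramified in the quartic $D\sim 4h$, one has $f_*\CO_X\cong\CO_{\PP^3}\oplus\CO_{\PP^3}(-2h)$, hence
$$
H^\bullet(X,\CO_X(mh)) \cong H^\bullet(\PP^3,\CO_{\PP^3}(mh))\oplus H^\bullet(\PP^3,\CO_{\PP^3}((m-2)h)).
$$
Taking $m=0$ gives $H^\bullet(\PP^3,\CO)\oplus H^\bullet(\PP^3,\CO(-2h))=\C$ concentrated in degree $0$, which (via the two choices $a=b=0$ and $a=b=-1$) proves that both $\CO_{X^+}$ and $\CO_{X^+}(-h)$ are exceptional. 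Taking $m=-1$ gives $H^\bullet(\PP^3,\CO(-h))\oplus H^\bullet(\PP^3,\CO(-3h))=0$, which is exactly the semiorthogonality $\Ext^\bullet(\CO_{X^+},\CO_{X^+}(-h))=0$ required by the definition of an exceptional collection.

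With $(\CO_{X^+}(-h),\CO_{X^+})$ established as an exceptional pair, Corollary~\ref{sodgen} immediately produces the asserted decomposition with $\CA_{X^+}=\langle\CO_{X^+}(-h),\CO_{X^+}\rangle^\perp$. The only genuinely nontrivial point, and the one I would flag as the main obstacle, is the reduction of cohomology from $X^+$ down to $X$: this rests on the vanishing $R^{>0}\sigma_*\CO_{X^+}=0$ together with the fact that derived-categorical formalism (admissibility and Corollary~\ref{sodgen}) remains valid for $\D^b$ of the possibly non-algebraic Moishezon variety $X^+$. Both are supplied by the rationality of ordinary double points and by the properties of the derived category of an algebraic space recorded in Section~\ref{s-pre}; once granted, the remaining cohomology vanishings on $\PP^3$ are routine.
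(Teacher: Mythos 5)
Your proof is correct and takes essentially the same route as the paper: both arguments rest on the rationality of the ordinary double points to get $\sigma_*\CO_{X^+}\cong\CO_X$, use adjunction and the projection formula to reduce all $\Ext$-computations from $X^+$ to $X$, and then conclude via Corollary~\ref{sodgen}. The only cosmetic difference is that the paper cites its earlier corollary (from Lemma~\ref{pbec}) for the exceptionality of the pair $(\CO_X(-h),\CO_X)$ on $X$, whereas you inline that computation via $f_*\CO_X\cong\CO_{\PP^3}\oplus\CO_{\PP^3}(-2h)$.
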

\begin{proof}
Note that $\sigma_*\CO_{X^+} \cong \CO_X$ since ordinary double point is a rational singularity, hence
$$
\Ext^\bullet(\sigma^*\CE,\sigma^*\CE') \cong
\Ext^\bullet(\CE,\sigma_*\sigma^*\CE') \cong
\Ext^\bullet(\CE,\CE'\otimes\sigma_*\CO_{X^+}) \cong
\Ext^\bullet(\CE,\CE').
$$
This shows that $(\CO_{X^+}(-h),\CO_{X^+})$ is an exceptional pair in $\D^b(X^+)$.
Hence it gives the required decomposition.
\end{proof}

\begin{remark}
A natural question to ask is whether the Serre functor of $\CA_{X^+}$ can be written as an involution composed with the shift by 2.
It turns out that the answer is negative. Indeed, since $K_{X^+} = -2h$, we have
$$
\SS_{\CA_{X^+}}^{-1} \cong
\LL_{\langle \CO_{X^+}(-h),\CO_{X^+} \rangle}\circ \SS^{-1}_{\D^b(X^+)} \cong
\LL_{\langle \CO_{X^+}(-h),\CO_{X^+} \rangle}\circ \TT_{\CO_{X^+}(2h)}[-3].
$$
It is easy to see that applied to $\sigma^*\CF$ this functor gives $\sigma^*\tau\CF[-2]$.
On the other hand, if $C$ is an exceptional curve of $X^+$ then
$$
\SS_{\CA_{X^+}}(\CO_{C}(-1)) \cong \CO_{C}(-1)[-3].
$$
The shifts are different, hence the behavior of $\CA_{X^+}$ is more complicated in this case.
\end{remark}

\begin{remark}
If we have a family $X_t$ with smooth generic member and $X_0$ being a nodal double solid,
then one can consider $\CA_{X_0^+}$ as a {\em smooth degeneration} of $\CA_{X_t}$. And the category
$\CA_{X'_0}$ can be thought of as a (categorical) blowup of the category $\CA_{X'_0}$.
\end{remark}

\begin{lemma}\label{dbxp}
There is a fully faithful embedding $\CA_{X^+} \to \CA_{X'}$ which extends
to a semiorthogonal decomposition $\CA_{X'} = \langle \CA_{X^+}, \{ \CO_{Q_i}(-1,0) \}_{i=1}^N \rangle$.
\end{lemma}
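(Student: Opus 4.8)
The plan is to realise $X'$ as the blowup of the small resolution $X^+$ and then transport the decomposition of Corollary~\ref{caxp} across Orlov's blowup formula, matching it against the given decomposition~\eqref{sodxp}. First I would fix the geometry. An analytic neighbourhood of each node $x_i\in X$ is a cone over a smooth quadric surface, and for such a cone the blowup of the vertex agrees with the blowup of the exceptional line of either small resolution. Hence the two resolutions fit into a blowup morphism $\pi:X'\to X^+$ whose centre is the disjoint union of the exceptional curves $C_i=\sigma^{-1}(x_i)\cong\PP^1$, each with normal bundle $\CO(-1)\oplus\CO(-1)$; its exceptional divisor over $C_i$ is $Q_i=\PP(N_{C_i/X^+})\cong\PP^1\times\PP^1$, and I normalise the bigrading so that $\CO_{Q_i}(-1,0)=p_i^*\CO_{C_i}(-1)$ for the contraction $p_i=\pi|_{Q_i}:Q_i\to C_i$. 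Short computations then give $\CO_{X'}(Q_i)|_{Q_i}\cong\CO_{Q_i}(-1,-1)$, the identification $e_i=[Q_i]$ (since $f^*E_i=Q_i$), $\CO_{X'}(-h)\cong\pi^*\CO_{X^+}(-h)$, and, from the structure sequence of $Q_i$, the vanishings $R\pi_*\CO_{X'}\cong\CO_{X^+}\cong R\pi_*\CO_{X'}(Q_i)$.

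Next I would build the embedding. Since $R\pi_*\CO_{X'}\cong\CO_{X^+}$, the functor $\pi^*$ is fully faithful, and for $F\in\CA_{X^+}$ the projection formula together with $\CO_{X'}(-h)\cong\pi^*\CO_{X^+}(-h)$ and $R\pi_*\CO_{X'}(Q_i)\cong\CO_{X^+}$ reduces each of $\Ext^\bullet(\CO_{X'}(-h),\pi^*F)$, $\Ext^\bullet(\CO_{X'},\pi^*F)$ and $\Ext^\bullet(\CO_{X'}(-e_i),\pi^*F)$ to an $\Ext^\bullet$-group on $X^+$ against $\CO_{X^+}(-h)$ or $\CO_{X^+}$, all of which vanish by definition of $\CA_{X^+}$. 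Thus $\pi^*$ carries $\CA_{X^+}$ fully faithfully into $\CA_{X'}$, and I henceforth identify $\CA_{X^+}$ with its image. A direct cohomology computation on $\PP^1\times\PP^1$, in which every relevant twist restricts to $\CO(-1)$ on one of the two rulings, shows that each $\CO_{Q_i}(-1,0)$ is exceptional, lies in $\CA_{X'}$, and that $\langle\pi^*\CA_{X^+},\{\CO_{Q_i}(-1,0)\}\rangle$ is a semiorthogonal sequence. The only nonobvious vanishing is $\Hom^\bullet(\CO_{Q_i}(-1,0),\pi^*F)=0$, which I obtain by rewriting it through $j_i^!\pi^*F\cong p_i^*(F|_{C_i})\otimes\CO_{Q_i}(-1,-1)[-1]$ as $H^\bullet(C_i,(F|_{C_i})\otimes Rp_{i*}\CO_{Q_i}(0,-1))$ and noting $Rp_{i*}\CO_{Q_i}(0,-1)=0$.

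To prove generation I would compare two semiorthogonal decompositions of $\D^b(X')$. Orlov's blowup formula~\cite{Or} for $\pi$ gives $\D^b(X')=\langle\pi^*\D^b(X^+),\CB_1,\dots,\CB_N\rangle$ with $\CB_i=\langle\CO_{Q_i}(-1,0),\CO_{Q_i}\rangle$; substituting the decomposition $\D^b(X^+)=\langle\CA_{X^+},\CO_{X^+}(-h),\CO_{X^+}\rangle$ of Proposition~\ref{dbxplus} and pulling back yields
\[
\D^b(X')=\langle\pi^*\CA_{X^+},\CO_{X'}(-h),\CO_{X'},\{\CO_{Q_i}(-1,0),\CO_{Q_i}\}_{i=1}^N\rangle .
\]
Each $\CO_{Q_i}(-1,0)$ is completely orthogonal to $\CO_{X'}(-h)$, to $\CO_{X'}$, and to all sheaves supported on $Q_j$ with $j\neq i$, so by Lemma~\ref{perpmut} I may permute the $\CO_{Q_i}(-1,0)$ to the far left, immediately after $\pi^*\CA_{X^+}$. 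It then remains to left-mutate the block $\{\CO_{Q_i}\}$ through $\CO_{X'}$: since $\Ext^\bullet(\CO_{X'},\CO_{Q_i})=\C$, the triangle~\eqref{excmut} is the structure sequence $\CO_{X'}(-e_i)\to\CO_{X'}\to\CO_{Q_i}$, whence $\LL_{\CO_{X'}}\CO_{Q_i}\cong\CO_{X'}(-e_i)[1]$. This produces
\[
\D^b(X')=\langle\pi^*\CA_{X^+},\{\CO_{Q_i}(-1,0)\},\CO_{X'}(-h),\{\CO_{X'}(-e_i)\},\CO_{X'}\rangle ,
\]
whose right-hand block coincides with that of~\eqref{sodxp}. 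Since the first component of a semiorthogonal decomposition is recovered as the right orthogonal to the remaining components, comparing with~\eqref{sodxp} gives $\CA_{X'}=\langle\pi^*\CA_{X^+},\{\CO_{Q_i}(-1,0)\}\rangle$.

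I expect the generation step to be the main obstacle: the bookkeeping of mutations that recombines the Orlov blocks $\CB_i$ and the structure sheaves into precisely $\{\CO_{Q_i}(-1,0)\}$ on the left and $\{\CO_{X'}(-e_i)\}$ on the right, together with the complete-orthogonality verifications that license the permutations, is where all the computation lives. A secondary technical point deserving a remark is that $X^+$ is only a Moishezon variety, so one must note that full faithfulness of $\pi^*$ and Orlov's blowup formula remain valid over an algebraic-space base; both are local statements over $X^+$ and rely only on $R\pi_*\CO_{X'}\cong\CO_{X^+}$ and base change, which hold in this setting.
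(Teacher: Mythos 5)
Your proposal is correct and takes essentially the same route as the paper's own proof: realize $X'$ as the blowup of $X^+$ along the exceptional curves $C_i$, invoke the standard blowup decomposition with blocks $\langle \CO_{Q_i}(-1,0),\CO_{Q_i}\rangle$, substitute the decomposition of Proposition~\ref{dbxplus}, move the sheaves $\CO_{Q_i}(-1,0)$ to the left by complete orthogonality, mutate $\CO_{Q_i}$ through $\CO_{X'}$ into $\CO_{X'}(-e_i)$, and compare with~\eqref{sodxp}. The only difference is that you spell out routine verifications the paper leaves implicit (full faithfulness of $\pi^*$, the $j^!$ orthogonality computations, the identification $e_i=[Q_i]$, and the Moishezon caveat), all of which are sound.
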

\begin{proof}
Note that the blow up of the minimal resolution $X^+$ in the union of the exceptional curves
$C_1 = \sigma^{-1}(x_1)$, \dots, $C_N = \sigma^{-1}(x_N)$ coincides with the blowup $X'$ of $X$.
Denote the exceptional divisors by $Q_1$, \dots, $Q_N$.
Note that $Q_i \cong C_i \times \PP^1$.

The standard semiorthogonal decomposition of the blowup in this case gives
$$
\D^b(X') = \langle \D^b(X^+), \{ \CO_{Q_i}(-1,0), \CO_{Q_i} \}_{i=1}^N \rangle.
$$
Replacing $\D^b(X^+)$ by its decomposition we obtain
$$
\D^b(X') = \langle \CA_{X^+}, \CO_{X'}(-h), \CO_{X'}, \{ \CO_{Q_i}(-1,0), \CO_{Q_i} \}_{i=1}^N \rangle.
$$
Now we note that the sheaves $\CO_{Q_i}(-1,0)$ are completely orthogonal to $\CO_{X'}(-h)$ and $\CO_{X'}$.
Indeed, since the latter are pullbacks via $\sigma':X' \to X$, it suffices to observe that
$\sigma'_*\CO_{Q_i}(-1,0) = 0$, since $\sigma'$ contracts each of $Q_i$ to a point and the sheaf
$\CO_{Q_i}(-1,0)$ on $Q_i$ is acyclic. Therefore we can move all $\CO_{Q_i}(-1,0)$ to the left.
We obtain a semiorthogonal decomposition
$$
\D^b(X') = \langle \CA_{X^+}, \{ \CO_{Q_i}(-1,0) \}_{i=1}^N, \CO_{X'}(-h), \CO_{X'}, \{ \CO_{Q_i} \}_{i=1}^N \rangle.
$$
Finally, mutating $\CO_{Q_i}$ through $\CO_{X'}$ we obtain
$$
\D^b(X') = \langle \CA_{X^+}, \{ \CO_{Q_i}(-1,0) \}_{i=1}^N, \CO_{X'}(-h), \{ \CO_{X'}(-e_i) \}_{i=1}^N, \CO_{X'} \rangle.
$$
Comparing it with~\eqref{sodxp} we deduce the claim.
\end{proof}

\section{Quartic symmetroids and special double solids}\label{s-symm}

In this section we discuss the quartic double solids associated with quartic symmetroids and state the main result of the paper.
The following construction is very classical. For more details see e.g.~\cite{Co}.

Assume $V$ and $W$ are vector spaces, both of dimension $4$, and choose a generic embedding
$$
s:W \subset S^2V^\vee.
$$
Alternatively, $s$ can be considered as a tensor in $W^\vee \otimes S^2V^\vee$.
The latter space can be considered as the space of global sections of various natural vector bundles.
Considering zero loci of these sections we obtain some natural varieties associated with $s$.
For example, considering the line bundle $\CO(1,2)$ on $\PP(W)\times\PP(V)$ we obtain
a family of quadrics
$$
Q \subset \PP(W)\times\PP(V).
$$
The degeneration locus of this family $D \subset \PP(W)$ is the zero locus of $\det s \in S^4W^\vee \otimes (\det V^\vee)^2$,
hence is a quartic surface. Quartic surfaces obtained in this way are called {\sf quartic symmetroids}.
If $s$ is generic then $D$ has 10 ordinary double points. The blowup $D'$ of $D$ can be realized
as the zero locus of $s$ considered as a global section of the vector bundle $W^\vee\otimes\CO(1,1)$
on $\PP(V)\times\PP(V)$ (the map $D' \to D$ takes a point $(v,v') \in D'$ to the unique $w \in \PP(W)$
such that quadric $Q_w$ is singular at $v$). On the other hand, the transposition of factors
$\iota:\PP(V)\to\PP(V)$ acts on K3-surface $D'$ without fixed points, hence
$$
S = D'/\iota
$$
is an Enriques surface.

%
%
%
%
%

\begin{remark}
Enriques surfaces associated in this way with quartic symmetroids are known
as {\sf nodal Enriques surfaces} (the name is slightly misleading, since
they are smooth), or {\sf Reye congruences}. The family of nodal Enriques
surfaces is known to be a divisor in the family of all Enriques surfaces.
\end{remark}


Our goal is the following

\begin{conjecture}
Let $X'$ be the blowup of the nodal quartic double solid $X$ associated with the quartic symmetroid $D \subset \PP^3$
and $S = D'/\iota$ --- the corresponding Enriques surface.
Then
$$
\CA_{X'} \cong \D^b(S),
$$
where subcategory $\CA_{X'} \subset \D^b(X')$ is defined in Corollary~{\rm\ref{caxp}}.
\end{conjecture}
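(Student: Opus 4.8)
The plan is to start from the comparison of nontrivial components announced in the introduction --- the equivalence $\Psi\colon\CA_{X^+}\to\CA_S$ obtained by combining the decomposition $\D^b(X^+)=\langle\CA_S,\CO_{X^+}(-h),\CO_{X^+}\rangle$ with Proposition~\ref{dbxplus} --- and to extend it across the extra exceptional objects. By Lemma~\ref{dbxp} one has $\CA_{X'}=\langle\CA_{X^+},\{\CO_{Q_i}(-1,0)\}_{i=1}^{10}\rangle$, while Zube's Theorem~\ref{zube} gives $\D^b(S)=\langle\{\CO_S(-F_i^+)\}_{i=1}^{10},\CA_S\rangle$ (with $\CA_S$ on the right, as the orthogonality direction in~\eqref{cas} prescribes). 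So both categories are assembled from the common component by gluing ten pairwise completely orthogonal exceptional objects, and the problem is to promote $\Psi$ to an equivalence $\CA_{X'}\to\D^b(S)$ that matches $\CO_{Q_i}(-1,0)$ with $\CO_S(-F_i^+)$ after a suitable relabelling of the ten nodes.

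First I would put the two decompositions in the same shape. They are oriented oppositely --- on $X'$ the nontrivial part lies to the left of the exceptional block, on $S$ to its right --- but Serre duality on the Enriques surface repairs this: since $\CO_S(-F_i^+)\otimes\omega_S=\CO_S(-F_i^-)$ and the Serre functor is $(-)\otimes\omega_S[2]$, one computes $\langle\{\CO_S(-F_i^+)\}\rangle^\perp=\CA_S\otimes\omega_S$, whence also $\D^b(S)=\langle\CA_S\otimes\omega_S,\{\CO_S(-F_i^+)\}\rangle$ with the nontrivial part (still equivalent to $\CA_S$) on the left, matching $\CA_{X'}$. In this common shape the attachment of each $E_i$ to the left component $\CB$ is encoded by the single gluing object $g_i=i^!_{\CB}E_i\in\CB$ corepresenting $B\mapsto\Hom(B,E_i)$. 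On the resolution side $\CA_{X^+}$ enters $\CA_{X'}$ by pullback along the blowup $\pi\colon X'\to X^+$, so adjunction gives $\Hom(\pi^*A,\CO_{Q_i}(-1,0))=\Hom(A,\pi_*\CO_{Q_i}(-1,0))$, and a direct computation on the ruled surface $Q_i$ yields $\pi_*\CO_{Q_i}(-1,0)\cong\CO_{C_i}(-1)$, the twisted structure sheaf of the exceptional curve $C_i=\sigma^{-1}(x_i)$, which lies in $\CA_{X^+}$; thus $g_i^{X'}\cong\CO_{C_i}(-1)$. On the Enriques side the same Serre-duality computation shows that $g_i^{S}$ is controlled not by $F_i^+$ but by the complementary half-fiber: it is the image of $\CO_S(-F_i^-)$ under the projection to the nontrivial component.

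The extension is then formal: an equivalence of the left components that matches the ten exceptional objects and intertwines the gluing objects $g_i$ (the remaining datum being the evaluation map $i_{\CB}g_i\to E_i$, which for an exceptional $E_i$ is determined up to a scalar) extends to an equivalence of the glued categories. Everything therefore reduces to showing that $\Psi$ carries each $\CO_{C_i}(-1)$ to the object of the nontrivial component cut out by the complementary half-fiber $\CO_S(-F_i^-)$; in other words, that the ten classes of the exceptional curves of the small resolution are sent to the distinguished objects attached to the ten elliptic pencils of $S$. One should note that choosing a small resolution amounts to choosing a ruling of each quadric cone, i.e.\ a sign at each node, and this is exactly the freedom of choosing $F_i^+$ versus $F_i^-$; the matching must be set up compatibly with these choices. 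A reassuring consistency check is provided by the Serre functors: the remark after Proposition~\ref{dbxplus} gives $\SS_{\CA_{X^+}}(\CO_{C_i}(-1))\cong\CO_{C_i}(-1)[-3]$, and since any extension of $\Psi$ must intertwine Serre functors, the target objects must obey the same $(-3)$-periodicity, which already constrains them sharply on the Enriques side.

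I expect this last identification to be the genuine obstacle, and the reason the statement is left as a conjecture. The most promising route is geometric rather than formal: extend the Fourier--Mukai kernel realising $\CA_{X^+}\cong\CA_S$ to a kernel on $X'\times S$ --- for instance by lifting to $X'\times D'$ and descending along $\iota$ --- and verify the correspondence $\CO_{Q_i}(-1,0)\mapsto\CO_S(-F_i^+)$ by a local analysis over each node. The hard part will be controlling the kernel along the exceptional loci $Q_i$, precisely where $X^+$ and $X'$ disagree, and recognising the resulting sheaf on $S$ as the half-fiber $F_i^+$: this is the categorical shadow of the classical incidence carrying the ten corank-$2$ quadrics of the symmetroid (the nodes of $D$) to the ten elliptic pencils of the Reye congruence, and making it precise is what I would regard as the core of any complete proof.
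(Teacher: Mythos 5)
You should note at the outset that the paper itself contains no proof of this statement: it is stated as a Conjecture, the authors explicitly say that instead of proving it they establish the weaker Theorem~\ref{cxpcs} (the equivalence $\CA_{X^+}\cong\CA_S$), and they defer the conjecture to future work. So there is no proof in the paper to compare yours against, and your proposal must stand on its own; as written it does not, because it contains exactly one genuine gap --- the one you yourself flag. Your reduction (write $\CA_{X'}=\langle\CA_{X^+},\{\CO_{Q_i}(-1,0)\}_{i=1}^{10}\rangle$ by Lemma~\ref{dbxp}, rewrite $\D^b(S)=\langle\CA_S\otimes\omega_S,\{\CO_S(-F_i^+)\}_{i=1}^{10}\rangle$ by a Serre-functor mutation, then extend the equivalence of Theorem~\ref{cxpcs} across the two exceptional blocks) is a sensible skeleton, but the entire content of the conjecture is concentrated in the step you defer: that the equivalence $\CA_{X^+}\cong\CA_S$ carries the gluing object $\CO_{C_i}(-1)$ (the pushforward of $\CO_{Q_i}(-1,0)$ along $X'\to X^+$) to the gluing object determined by the corresponding half-fiber on $S$, for a compatible labelling of the ten nodes. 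Without that identification the argument produces no functor $\CA_{X'}\to\D^b(S)$ at all; with it, the conjecture would essentially follow. A strategy with its key step declared an ``obstacle'' is not a proof.

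Two further cautions. First, your ``formal'' extension step --- that an equivalence of the left components matching the ten exceptional objects and intertwining the gluing objects extends to an equivalence of the glued categories --- is not a theorem about triangulated categories: a triangulated category is not determined by its semiorthogonal pieces together with the corepresenting objects of the gluing functors. To make this step honest you must work with dg-enhancements or Fourier--Mukai kernels throughout, where gluing along representable bimodules is indeed determined by the representing objects; this is available in the present situation but has to be invoked explicitly. Second, the ingredient you regard as the core difficulty is partially supplied by the paper's proof of Theorem~\ref{cxpcs}: Proposition~\ref{FS} proves $\Psi_2(\CO_S(2g-F_i^\pm))\cong\CO_{\Sigma_i^\pm}(-1)$, matching the half-fibers of $S$ with the ten exceptional objects $\CO_{\Sigma_i^\pm}$ on the scheme of lines $M$, and the final comparison of~\eqref{som13} with Theorem~\ref{dbm} is made precisely along these blocks. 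What is missing --- both in your proposal and in the paper --- is the transfer of this matching from $M$ down to the blowup $X'$, i.e.\ an identification, under the passage between $\D^b(M)$, $\D^b(X^+)$ and $\D^b(X')$ (through the flip and the blowup), of the objects $\CO_{\Sigma_i^+}$ with the sheaves $\CO_{Q_i}(-1,0)$. That is the concrete statement your strategy needs and does not supply, and it is presumably what the authors intend to settle when they return to the conjecture.
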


Instead of proving this conjecture we concentrate on proving the following
slightly simpler result. Note that both categories $\CA_{X'}$ and $\D^b(S)$
have a completely orthogonal exceptional collections of length~10:
for $\CA_{X'}$ it is given by the sheaves $\CO_{Q_i}(-1,0)$ (see Lemma~\ref{dbxp}),
and for $\D^b(S)$ it is given by the line bundles $\CO_S(-F_i^+)$ (see Theorem~\ref{zube}).
The orthogonal complements of these collections are the categories
$\CA_{X^+}$ and $\CA_S$ respectively.
The main result of the paper is the following

%

\begin{theorem}\label{cxpcs}
Let $X^+$ be a small resolution of singularities of the nodal quartic double solid $X$ associated with the quartic symmetroid $D \subset \PP^3$
and $S$ the corresponding Enriques surface.
Then
$$
\CA_{X^+} \cong \CA_S,
$$
where subcategories $\CA_{X^+} \subset \D^b(X^+)$ and $\CA_S \subset \D^b(S)$ are defined in
Proposition~{\rm\ref{dbxplus}} and~\eqref{cas} respectively.
\end{theorem}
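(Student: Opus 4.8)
The plan is to use the family of quadrics $Q \subset \PP(W)\times\PP(V)$ defining the symmetroid $D$ as a bridge between the two categories, the key intermediary being its sheaf of even Clifford algebras $\Clev$ on $\PP(W)$. Recall that for a quadric surface fibration the center of $\Clev$ is exactly the rank-two sheaf of algebras defining the double covering $X\to\PP(W)$ branched in $D$; thus $\Clev$ may be regarded as a sheaf of algebras $\CB$ on $X$, which is Azumaya of degree two over the smooth locus $X\setminus\{x_1,\dots,x_N\}$ (here $N=10$) and whose two generic factors over $\PP(W)\setminus D$ record the two rulings of the smooth quadric $Q_w$. The first step is to produce, from the relative spinor bundle of $Q/\PP(W)$, a Fourier--Mukai functor realizing the Clifford category $\D^b(X,\CB)$ inside $\D^b(X^+)$ and to identify its image with $\CA_{X^+}$. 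Over the nodes $x_i$ the Azumaya structure degenerates and the double cover $X$ is singular; the role of the small resolution is to make the target smooth and the spinor kernel well defined over the conifold points $x_i$. I expect the resulting spinor transform to be an equivalence $\D^b(X,\CB)\cong\CA_{X^+}$ rather than a mere restriction, with the nontrivial Brauer class of $\CB$ on $X\setminus\{x_1,\dots,x_N\}$ absorbed into the two-valuedness of the rulings. This is consistent with the mixed behaviour of $\SS_{\CA_{X^+}}$ noted after Proposition~\ref{dbxplus}.

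Second, I would treat the Enriques side through the resolved discriminant $D'$. Since $\iota$ is free, $D'\to S$ is the canonical \'etale double covering, i.e.\ $D' = \Spec_S(\CO_S\oplus\omega_S)$, so sheaves on $D'$ are modules over the sheaf of algebras $\CO_S\oplus\omega_S$ and $\D^b(D')$ is a module category over $\D^b(S)$. On the other hand $D'$ is cut out in $\PP(V)\times\PP(V)$ by the tensor $s$, and over $D'$ the family $Q$ carries a canonical spinor datum: each point $(v,v')$ of $D'$ records a corank-one quadric together with the extra information resolving its vertex $v$. The plan is to match, through this description, the even Clifford data on $X$ with a sheaf of algebras on $S$ whose module category is $\CA_S$, thereby obtaining a second equivalence $\D^b(X,\CB)\cong\CA_S$. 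Composing the two equivalences yields $\CA_{X^+}\cong\CA_S$.

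Finally, to make the two equivalences compatible one must match the two completely orthogonal exceptional collections of length $10$ that cut out $\CA_{X^+}$ and $\CA_S$ inside $\CA_{X'}$ and $\D^b(S)$ respectively: the sheaves $\{\CO_{Q_i}(-1,0)\}$ of Lemma~\ref{dbxp} on one side and the line bundles $\{\CO_S(-F_i^+)\}$ of Theorem~\ref{zube} on the other. Both collections are indexed by the $10$ nodes of $D$, equivalently by the $10$ corank-two quadrics in the family, and the content of the matching is that the spinor transform should send the half-fiber classes $F_i^\pm$ to the two local resolutions $\CO_{Q_i}(-1,0)$ and $\CO_{Q_i}(0,-1)$ of the conifold points. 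The hard part will be precisely this local analysis at the $10$ corank-two quadrics: there the quadric degenerates to a pair of planes, the even Clifford algebra ceases to be Azumaya, the double cover $X$ is singular, and the interaction of $D'$ with the free involution $\iota$ is most delicate. Controlling the spinor Fourier--Mukai kernel across these points---showing that it remains fully faithful and that the two branches of the degeneration correspond to the two small resolutions and to the pair $F_i^+,F_i^-$---is where the essential difficulty of the theorem is concentrated.
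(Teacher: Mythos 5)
Your strategy has a structural flaw that no amount of care with the spinor kernels can repair: the intermediary category you propose is far too small. Since your sheaf of algebras $\CB$ on $X$ is just the even Clifford algebra $\CB_0$ viewed as an algebra over its center $f_*\CO_X$, and $f$ is affine, one has $\D^b(X,\CB)\cong\D^b(\PP(W),\CB_0)$. But for this particular fibration --- quadric surfaces over a three-dimensional base --- Theorem~5.5 of~\cite{K08a} (invoked in the paper just before Corollary~\ref{phi1}) shows that $\D^b(\PP(W),\CB_0)$ is generated by the exceptional collection $\CB_{-3},\CB_{-2},\CB_{-1},\CB_0$: the only other possible component would be the derived category of the base locus of the web of quadrics, which is empty for generic $s$. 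Hence $K_0(\D^b(X,\CB))\cong\ZZ^4$. On the Enriques side, however, $K_0(\CA_S)$ contains $2$-torsion: the class $[\omega_S]-[\CO_S]$ is nonzero in $K_0(S)$ (its determinant is $\omega_S\ne\CO_S$) and satisfies $2([\omega_S]-[\CO_S])=-([\omega_S]-[\CO_S])^2=0$ because $K_S^2=0$; since the ten line bundles $\CO_S(-F_i^+)$ split off a free summand $\ZZ^{10}$, this torsion class lives in $K_0(\CA_S)$. Therefore your second equivalence $\D^b(X,\CB)\cong\CA_S$ is false, and since your proof needs both equivalences (their composition would force exactly this false statement), the plan collapses; the first claimed equivalence $\D^b(X,\CB)\cong\CA_{X^+}$ fails for the same reason, as $\CA_{X^+}\cong\CA_S$ by the theorem itself. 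The obstruction is global and visible already on Grothendieck groups; it is not concentrated in the local analysis at the ten nodes, which is where you located the difficulty.

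The paper's mechanism is genuinely different and explains where the nontrivial category actually lives: not in the Clifford/spinor data, but in the relative scheme of lines $M$ of the family $Q\to\PP(W)$. On one hand $M$ is identified with the blowup of $\Gr(2,V)$ along $S$, so Orlov's formula gives $\D^b(M)=\langle\pi^*\D^b(\Gr(2,V)),i_*p^*\D^b(S)\rangle$; on the other hand the result of~\cite{K10} gives $\D^b(M)=\langle\D^b(X^+),\D^b(\PP(W),\CB_0),\{\CO_{\Sigma_i^+}\}_{i=1}^{10}\rangle$, in which the Clifford category enters only as the four exceptional objects $\CS_{-1},\CS_0,\CS_1,\CS_2$. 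A chain of mutations converts the first decomposition into the second, and $\CA_{X^+}\cong\CA_S$ falls out by matching the complements. Your final paragraph does contain the one correct and essential intuition --- that the half-fibers $F_i^\pm$ must correspond to the two planes $\Sigma_i^\pm$ over each node, i.e.\ to the binary choices in the small resolution --- and this is precisely Proposition~\ref{FS} of the paper, which proves $\Psi_2(\CO_S(2g-F_i^\pm))\cong\CO_{\Sigma_i^\pm}(-1)$; but that matching is carried out inside $\D^b(M)$, where both ten-object collections are simultaneously visible, rather than through any Clifford-module equivalence over $X$.
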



\begin{remark}
Note that there are $2^{10}$ ways to choose one multiple fiber
in each of 10 elliptic pencils on~$S$, precisely the same number
as the number of small resolutions of singularities of $X$.
We will show below that there is a bijection between these sets
and the equivalence of Theorem~\ref{cxpcs} is for proved compatible choices.
\end{remark}

\begin{remark}
Note that different small resolutions of $X$ are related by flops.
These flops induce equivalences of the corresponding subcategories $\CA_{X^+}$.
On the other hand, different choices of multiple fibers in elliptic pencils on $S$
lead to subcategories $\CA_S$ related by appropriate mutation functors.
\end{remark}

The proof of the Theorem is given in the next section.

\section{The relative scheme of lines}\label{s-proof}

The main idea of the proof is to analyze carefully the derived category of
the relative scheme of lines $M$ on the family $Q \to \PP^3$ of quadrics associated with $s$.
One one hand, we show that $M$ is the blowup of $\Gr(2,V)$ in the Enriques surface $S$,
whereof one realizes $\D^b(S)$ as a component of $\D^b(M)$. On the other hand,
there is a general description of the relative scheme of lines~\cite{K10},
one component of which is the derived category of $X^+$. Finally, a sequence
of mutations allows to transform one of these decompositions of $\D^b(M)$
to the other and establishes the required equivalence.

\subsection{Relation to the Enriques surface}

First of all we will need another description of the Enriques surface associated with the quartic symmetroid.
For this consider the product of the Grassmannians $\Gr(2,V) \times \Gr(2,W)$. Denote the tautological subbundles
on those as $\CU$ and $\CU_W$ respectively. Then $s$ can be thought of as a global section of the vector
bundle $S^2\CU^\vee \boxtimes \CU_W$. Consider its zero locus $\TS \subset \Gr(2,V)\times\Gr(W)$.
Set-theoretically, $\TS$ is the set of all pairs $(L,K)$ of twodimensional subspaces $L \subset V$,
$K \subset W$ such that the line $\PP(L)$ is contained in all the quadrics in the pencil $\PP(K) \subset \PP(W)$.
It is known (see e.g.~\cite{Be}, Chapter VIII) that $\TS$ is isomorphic to the Enriques
surface associated with~$s$ (the map $D' \to \TS$ takes a point $(v,v') \in D' \subset \PP(V)\times \PP(V)$
to the point $(L,K) \in \Gr(2,V)\times\Gr(2,W)$, where $L$ is the linear span of $v$ and $v'$
and $K$ consists of all $w \in \PP(W)$ such that $Q_w$ contains both $v$ and~$v'$) and that
the projection onto $\Gr(2,V)$ gives an embedding of $S = \TS$ into $\Gr(2,V)$.

Below we will need a resolution of the structure sheaf of $S$ on $\Gr(2,V)$.

\begin{lemma}
The structure sheaf of $S$ in $\Gr(2,V)$ has the following resolution
\begin{equation}\label{os}
0 \to S^2\CU(-3g) \to W^\vee\otimes\CO_{\Gr(2,V)}(-3g) \to \CO_{\Gr(2,V)} \to \CO_S \to 0.
\end{equation}
\end{lemma}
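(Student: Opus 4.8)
The plan is to exhibit $S\subset\Gr(2,V)$ as the degeneracy locus of a morphism of vector bundles built from $s$, and then to read off the resolution from the associated Eagon--Northcott complex. First I would globalise the construction of $\TS$. The tensor $s\in W^\vee\otimes S^2V^\vee$ determines, via the restriction $S^2V^\vee\to S^2\CU^\vee$, a morphism of vector bundles
\[
\mu:\ W\otimes\CO_{\Gr(2,V)}\longrightarrow S^2\CU^\vee ,
\]
whose fibre over a plane $L\subset V$ is the map $W\to S^2L^\vee$, $w\mapsto s(w)|_L$. Comparing with the description of $\TS$ as the zero locus of $s$ regarded as a section of $S^2\CU^\vee\boxtimes\CU_W$, one sees that a pair $(L,K)$ lies in $\TS$ exactly when $K\subseteq\ker(\mu|_L)$. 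Hence the image of the embedding $S=\TS\hookrightarrow\Gr(2,V)$ is precisely the locus where $\mu$ fails to be surjective, i.e.\ where $\rank\mu\le 2$, endowed with the determinantal scheme structure cut out by the $3\times3$ minors of $\mu$.

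Next I would check that this degeneracy locus has the expected codimension. Fibrewise $\mu$ is a map $\C^4\to\C^3$, so the locus $\{\rank\mu\le2\}$ has expected codimension $(4-2)(3-2)=2$, while the deeper locus $\{\rank\mu\le1\}$ has expected codimension $(4-1)(3-1)=6$. Since $\Gr(2,V)$ is four-dimensional, genericity of the embedding $W\subset S^2V^\vee$ forces the first locus to be a surface (matching the known fact that $S$ is a smooth surface) and the second to be empty. Thus along $S$ the rank of $\mu$ is identically $2$, so $\ker\mu$ is a genuine rank-$2$ subbundle --- this recovers the second Grassmannian factor $K$ and shows the projection $\TS\to\Gr(2,V)$ is an isomorphism onto its image --- and the determinantal scheme is smooth, hence reduced and Cohen--Macaulay of the expected codimension $2$.

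With the expected codimension in hand, the Eagon--Northcott complex of $\mu$ (a morphism of bundles of ranks $4$ and $3$) is exact and resolves $\CO_S$; its last two terms are $\wedge^3(W\otimes\CO)\otimes\wedge^3 S^2\CU$, mapping onto the ideal of $3\times3$ minors via $\wedge^3\mu$, and $(S^2\CU)\otimes\wedge^4(W\otimes\CO)\otimes\wedge^3 S^2\CU$. I would then simplify using the canonical identifications $\wedge^3(W\otimes\CO)\cong W^\vee\otimes\CO$ and $\wedge^4(W\otimes\CO)\cong\CO$ (each up to the trivial one-dimensional twist by $\det W$), together with the key computation
\[
\wedge^3 S^2\CU=\det S^2\CU=(\det\CU)^{\otimes3}=\CO(-3g),
\]
which holds because $\CU$ has rank $2$ and $\det\CU=\wedge^2\CU=\CO(-g)$. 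Substituting these into the complex produces exactly
\[
0\to S^2\CU(-3g)\to W^\vee\otimes\CO_{\Gr(2,V)}(-3g)\to\CO_{\Gr(2,V)}\to\CO_S\to0 .
\]

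The only genuinely delicate point is the exactness of the Eagon--Northcott complex, which is equivalent to the degeneracy locus having exactly the expected codimension $2$; I expect this to be the main (if mild) obstacle, and it is supplied by the genericity of $s$ --- equivalently by the already-established smoothness of the Enriques surface $S$ and the emptiness of the locus $\{\rank\mu\le1\}$. Everything after that is the formal identification of the Eagon--Northcott terms carried out above.
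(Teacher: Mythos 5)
Your proposal is correct in substance, but it takes a genuinely different route from the paper's proof. The paper never mentions determinantal ideals: it writes the Koszul resolution of $\CO_{\TS}$ on the eightfold $\Gr(2,V)\times\Gr(2,W)$ (with $\TS$ the zero locus of $s$ viewed as a section of the rank-$6$ bundle $S^2\CU^\vee\boxtimes\CU_W$), pushes that resolution forward along the projection to $\Gr(2,V)$ --- legitimate because this projection embeds $\TS$ as $S$ --- and observes that all terms die except three, since the relevant bundles on $\Gr(2,W)$ are acyclic; the surviving cohomologies $H^2(S^2\CU_W(-1))=\C$, $H^2(S^3\CU_W)=W^\vee$, $H^0(\CO)=\C$ assemble into \eqref{os}. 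Your Eagon--Northcott argument is in effect the same computation organized downstairs (pushing a Koszul complex along a Grassmannian factor is precisely the ``geometric technique'' by which such complexes are usually produced), and your identifications are all correct: $\wedge^3 S^2\CU=(\det\CU)^{\otimes 3}=\CO(-3g)$, $\wedge^3(W\otimes\CO)\cong W^\vee\otimes\CO$, $\wedge^4(W\otimes\CO)\cong\CO$, and the codimension counts $(4-2)(3-2)=2$ and $(4-1)(3-1)=6>4$. What each approach buys: yours avoids the large Koszul complex and the cohomology bookkeeping on $\Gr(2,W)$, and makes visible that \eqref{os} is a twisted Hilbert--Burch resolution of a codimension-$2$ Cohen--Macaulay subscheme; the paper's works directly with $\CO_{\TS}$ and therefore never has to compare the determinantal scheme structure with the reduced surface $S$.

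That comparison of scheme structures is the one genuine gap in your write-up. Exactness of the Eagon--Northcott complex (which, as you say, needs only that the ideal of $3\times 3$ minors have grade $2$ --- automatic here, since its zero set is the $2$-dimensional image of $\TS$) gives a resolution of $\CO_Z$, where $Z$ is the scheme cut out by those minors; to obtain \eqref{os} you must still prove $Z=S$, i.e.\ that $Z$ is reduced. Your justification --- that this is ``equivalent'' to the already-known smoothness of $S$ together with the emptiness of $\{\rank\mu\le 1\}$ --- is not valid: constancy of the rank along a smooth support does not make a minor ideal radical. For instance, the map $\CO^4\to\CO^3$ over $\C[x,y,z,w]$ given by the matrix with rows $(1,0,0,0)$, $(0,1,0,0)$, $(0,0,x^2,y)$ has rank exactly $2$ along its degeneracy locus, which is the smooth plane $\{x=y=0\}$ of the expected codimension $2$, yet the ideal of $3\times3$ minors is $(x^2,y)$, which is non-reduced; the Eagon--Northcott complex is exact there and resolves the non-reduced structure sheaf. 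What actually supplies reducedness is the genericity of $s$ via Bertini for degeneracy loci: since $H^0(\Gr(2,V),S^2\CU^\vee)=S^2V^\vee$, as $s$ ranges over $W^\vee\otimes S^2V^\vee$ the morphism $\mu$ ranges over \emph{all} of $\Hom(W\otimes\CO,S^2\CU^\vee)$, and $W^\vee\otimes S^2\CU^\vee$ is globally generated, so for generic $s$ the determinantal scheme $Z$ is smooth away from $\{\rank\mu\le1\}=\emptyset$, hence reduced, hence equal to $S$. (Alternatively: $Z$ is Cohen--Macaulay, so generic reducedness suffices, but that too is a transversality statement about $s$, not a consequence of the smoothness of $S$.) With that repair your proof is complete.
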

\begin{proof}
By definition of $\TS$ we have the following Koszul resolution for $\CO_{\tilde{S}}$:
$$
\begin{array}{rcl}
0 \to \CO(-6,-3) \to
S^2\CU(-4)\boxtimes\CU_W(-2) \to \\
{S^2\CU(-3)\boxtimes S^2\CU_W(-1)} & \oplus &  (S^4\CU(-2) \oplus \CO(-4))\boxtimes\CO(-2) \to \\
{\CO(-3)\boxtimes S^3\CU_W} & \oplus & (S^4\CU(-1) \oplus S^2\CU(-2))\boxtimes \CU_W(-1) \to \\
S^2\CU(-1)\boxtimes S^2\CU_W & \oplus & (S^4\CU \oplus \CO(-2)) \boxtimes \CO(-1) \to \\
&& \qquad\qquad\qquad\qquad\qquad S^2\CU\boxtimes\CU_W \to
{\CO} \to \CO_{\tilde{S}} \to 0.
\end{array}
$$
Since the projection to $\Gr(2,V)$ identifies $\tilde{S}$ with $S$,
we can take the pushforward of the above resolution to get a resolution of $\CO_S$.
Note that the bundles $\CU_W$, $\CO(-1)$, $S^2\CU_W$, $\CU_W(-1)$, $\CO(-2)$, $\CU_W(-2)$ and $\CO(-3)$ on $\Gr(2,W)$
are acyclic, the nontrivial pushforward will only come from the terms
$S^2\CU(-3)\boxtimes S^2\CU_W(-1)$, $\CO(-3)\boxtimes S^3\CU_W$, and $\CO$.
Since
$$
H^2(\Gr(2,W),S^2\CU_W(-1)) = \C,\qquad
H^2(\Gr(2,W),S^3\CU_W) = W^\vee,\qquad
H^0(\Gr(2,W),\CO) = \C,
$$
and the other cohomology vanishes, we obtain the desired resolution.
\end{proof}

Now consider the relative scheme of lines $M$ for the family of quadrics $Q \to \PP(W)$.
By definition $M$ is a subscheme in $\Gr(2,V)\times \PP(W)$, consisting of all pairs $(L,w)$,
where $L$ is a 2-dimensional subspace of $V$ and $w \in \PP(W)$, such that the line $\PP(L)$
lies on the quadric $Q_w \subset \PP(V)$. Thus, it is the zero locus of a section
of the vector bundle $S^2\CU^\vee\otimes\CO_{\PP(W)}(1)$ on $\Gr(2,V)\times\PP(W)$,
given by $s$. Since $s$ is generic the section is regular.
Denoting the positive generator of $\Pic(\Gr(2,V))$ by $\CO(g)$
we obtain the following

\begin{lemma}
The structure sheaf of $M$ in $\Gr(2,V)\times\PP(W)$ has the following resolution
\begin{equation}\label{om}
0 \to \CO(-3g-3h) \to S^2\CU(-g-2h) \to S^2\CU(-h) \to \CO \to \CO_M \to 0.
\end{equation}
\end{lemma}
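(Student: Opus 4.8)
The plan is to present $M$ as the zero locus of a regular section of an explicit rank-$3$ bundle on $\Gr(2,V)\times\PP(W)$ and then simply read off the resolution \eqref{om} from the associated Koszul complex, reducing everything to a determinant computation for the rank-$2$ tautological bundle $\CU$.

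First I would record that, by the very definition of $M$ recalled just before the statement, $M$ is the zero locus of the section of
$$
\CE := S^2\CU^\vee\otimes\CO(h)
$$
determined by $s$, where $\CU$ is the tautological subbundle on $\Gr(2,V)$ and $h$ denotes the pullback of the positive generator of $\Pic\PP(W)$. Since $S^2\CU^\vee$ has rank $3$, the bundle $\CE$ has rank $3$, and genericity of $s$ makes the section regular; hence $M$ has codimension $3$ and the Koszul complex
\begin{equation*}
0 \to \Lambda^3\CE^\vee \to \Lambda^2\CE^\vee \to \CE^\vee \to \CO \to \CO_M \to 0
\end{equation*}
is exact. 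It then remains only to identify the three exterior powers in the form appearing in \eqref{om}.

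The identification rests on two standard facts about the rank-$2$ bundle $\CU$ on $\Gr(2,V)$, namely $\det\CU=\CO(-g)$ and $\CU^\vee\cong\CU(g)$. The latter gives $S^2\CU^\vee\cong S^2\CU(2g)$, and together they give $\det(S^2\CU)=(\det\CU)^{3}=\CO(-3g)$. From these I would compute the term adjacent to $\CO$ as $\CE^\vee\cong (S^2\CU^\vee)^\vee(-h)\cong S^2\CU(-h)$; the leftmost term as $\Lambda^3\CE^\vee=\det\CE^\vee\cong\det(S^2\CU)\otimes\CO(-3h)\cong\CO(-3g-3h)$; and, using the rank-$3$ isomorphism $\Lambda^2 F\cong F^\vee\otimes\det F$ applied to $F=\CE^\vee$, the middle term as $\Lambda^2\CE^\vee\cong\CE\otimes\det\CE^\vee\cong S^2\CU^\vee(-3g-2h)\cong S^2\CU(-g-2h)$. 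Substituting these three identifications into the Koszul complex reproduces exactly \eqref{om}.

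There is no real obstacle here; the single point requiring attention is the last identification, where one must remember to convert $S^2\CU^\vee$ back into $S^2\CU$ by the twist $(\det\CU)^{-2}=\CO(2g)$ in order to bring the middle term into the normalization stated in \eqref{om}. Everything else is routine determinant bookkeeping on $\Gr(2,V)$.
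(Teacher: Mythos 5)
Your proof is correct and is exactly the paper's argument: the paper's entire proof reads ``This is just the Koszul resolution,'' relying on the preceding observation that $M$ is the zero locus of a regular section of $S^2\CU^\vee\otimes\CO_{\PP(W)}(1)$. You have merely made explicit the routine identifications $\Lambda^2\CE^\vee\cong\CE\otimes\det\CE^\vee$ and $\det(S^2\CU)=(\det\CU)^3=\CO(-3g)$ that the authors leave to the reader.
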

\begin{proof}
This is just the Koszul resolution.
\end{proof}

Now we are ready to show that map $\pi:M \to \Gr(2,V)$ is the blowup of $S \subset \Gr(2,V)$.

\begin{lemma}
The map $\pi:M \to \Gr(2,V)$ is the blowup of $S \subset \Gr(2,V)$.
Moreover, if $e$ is the class of the exceptional divisor then $h = 3g - e$.
\end{lemma}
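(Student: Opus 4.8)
The plan is to study the projection $\pi\colon M\to\Gr(2,V)$ through the bundle map underlying the defining section of $M$. The tensor $s\in W^\vee\otimes S^2V^\vee$, composed with the surjection $S^2V^\vee\otimes\CO\to S^2\CU^\vee$ coming from the tautological quotient $V^\vee\otimes\CO\to\CU^\vee$, induces a morphism $\phi\colon W\otimes\CO_{\Gr(2,V)}\to S^2\CU^\vee$ of vector bundles of ranks $4$ and $3$. By construction $M=\{(L,[w])\mid w\in\Ker\phi_L\}\subset\Gr(2,V)\times\PP(W)$, so the fibre of $\pi$ over $L$ is $\PP(\Ker\phi_L)\subset\PP(W)$. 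Since $\dim W=4>3=\dim S^2L^\vee$, the kernel is never zero, and for generic $s$ a dimension count gives that $\{\rank\phi_L\le 1\}$ is empty (expected codimension $6$) while $\{\rank\phi_L\le 2\}$ has the expected codimension $2$. First I would identify this last degeneracy locus scheme-theoretically with $S$, using the description $S=\TS$ and the Eagon--Northcott resolution~\eqref{os}. It then follows that $\pi$ is an isomorphism over $\Gr(2,V)\setminus S$, where $\Ker\phi_L=\langle w\rangle$ is a single line, while over a point $L\in S$ the fibre is $\PP(\Ker\phi_L)=\PP(K)\cong\PP^1$. Pushing~\eqref{om} forward along $\pi$ shows in addition that $R\pi_*\CO_M=\CO_{\Gr(2,V)}$, and that $M$ is a smooth, irreducible, four-dimensional local complete intersection.

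Next I would identify $M$ with the blowup. The four maximal minors of the $3\times4$ matrix $\phi$ are sections $m_0,\dots,m_3$ of $\det(S^2\CU^\vee)=\CO(3g)$, and the determinantal structure recorded in~\eqref{os} shows that they generate the ideal sheaf $\mathcal{I}_S$. By Cramer's rule, for $L\notin S$ the kernel line $\Ker\phi_L\subset W$ is spanned by $(m_0(L),\dots,m_3(L))$; hence over $\Gr(2,V)\setminus S$ the subvariety $M$ coincides with the graph of the rational map $\mu$ sending $L$ to $[m_0(L):\dots:m_3(L)]\in\PP(W)$. Since $M$ is irreducible of dimension $4$ and its exceptional locus lies over the surface $S$ (so has dimension $3$), $M$ is exactly the closure of this graph. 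As $\mu$ is given by a generating set of sections of $\mathcal{I}_S$, that closure is canonically the blowup $\Proj\bigl(\bigoplus_{d\ge0}\mathcal{I}_S^{\,d}\bigr)=\Bl_{\mathcal{I}_S}\Gr(2,V)$; and because $S$ is smooth with radical ideal $\mathcal{I}_S$, this is the blowup of the smooth subvariety $S$, proving the first assertion.

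It remains to compute the class $e$ of the exceptional divisor. On $M$ the tautological inclusion $\CO_M(-h)=\langle w\rangle\hookrightarrow W\otimes\CO_M$ lands in $\Ker\phi$ by the very definition of $M$, so $\phi|_M$ factors through a map $\bar{\phi}\colon(W\otimes\CO_M)/\CO_M(-h)\to S^2\CU^\vee|_M$ of rank-$3$ bundles. This $\bar{\phi}$ is an isomorphism away from the exceptional divisor $E$ and drops rank by exactly one along it; therefore $\det\bar{\phi}$ is a section of $\det\bigl((W\otimes\CO_M)/\CO_M(-h)\bigr)^\vee\otimes\det(S^2\CU^\vee)=\CO_M(3g-h)$ whose reduced zero locus is $E$. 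Hence $e=3g-h$, that is $h=3g-e$, as claimed.

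The step I expect to be the main obstacle is the scheme-theoretic content hidden in the second paragraph: checking that the degeneracy locus of $\phi$ is reduced and equal to $S$, that the minors $m_0,\dots,m_3$ generate precisely the radical ideal $\mathcal{I}_S$ (so that one blows up the smooth surface $S$ rather than a thickening of it), and that $E$ occurs with multiplicity one in the zero locus of $\det\bar{\phi}$. All of these rest on the genericity of $s$ together with the Eagon--Northcott shape of the resolution~\eqref{os}.
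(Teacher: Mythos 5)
Your argument is correct, but it takes a genuinely different route from the paper's. The paper never looks at minors, Cramer's rule or graph closures: it first notes, essentially as you do, that the fibres of $\pi$ are single points away from $S$ and positive-dimensional precisely over $S$, but it then invokes the general fact that a projective birational morphism is the blowup of an ideal sheaf, namely of (a twist of) the pushforward of the relatively very ample bundle, here $\pi_*\CO_M(h)$. That pushforward is computed by pushing the Koszul resolution~\eqref{om} forward to $\Gr(2,V)$, which yields the exact sequence $0\to S^2\CU\to W^\vee\otimes\CO\to\pi_*\CO_M(h)\to 0$; comparison with~\eqref{os} gives $\pi_*\CO_M(h)\cong I_S(3g)$, and both assertions of the lemma (that the centre is the reduced ideal of $S$, and that $h=3g-e$) fall out of this single computation at once. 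Your proof replaces this with explicit determinantal geometry: $S$ as the rank-two degeneracy locus of $\phi$, Cramer's rule identifying $M$ over $\Gr(2,V)\setminus S$ with the graph of the map defined by the maximal minors, and the universal property of blowing up the ideal of minors. What this buys is concreteness --- you see directly that $\rho$ is the map given by the minors, and you produce an explicit equation $\det\bar{\phi}$ for $E$ --- at the cost of the scheme-theoretic burden you correctly flag (reducedness of the degeneracy locus, the minors generating the radical ideal $I_S$). These points are fine for generic $s$: your reading of~\eqref{os} as the Eagon--Northcott complex of $\phi$ is exactly right, since for $F=W\otimes\CO$, $G=S^2\CU^\vee$ that complex is precisely $0\to S^2\CU(-3g)\to W^\vee\otimes\CO(-3g)\to\CO$, so the map to $\CO$ is given by the maximal minors and its image is $I_S$. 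Moreover, your last worry (multiplicity one of $E$ in $\det\bar{\phi}$) can be bypassed altogether: once $M\cong\Bl_{I_S}\Gr(2,V)$ with $I_S$ generated by the four sections $m_i\in H^0(\CO(3g))$, the projection $\rho$ is automatically the resolution of the rational map $[m_0:\cdots:m_3]$, so $\CO_M(h)=\rho^*\CO_{\PP(W)}(1)\cong\pi^*\CO(3g)\otimes\CO_M(-E)$ with $E$ the reduced Cartier exceptional divisor of the blowup of the smooth surface $S$; this is the relation $h=3g-e$ with no multiplicity computation needed.
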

\begin{proof}
First, let us describe the fibers of $\pi$. It is clear that the fiber over a point of $\Gr(2,V)$
corresponding to a subspace $L \subset V$ is the linear subspace of $\PP(W)$ consisting of quadrics,
containing the line $\PP(L)$. This is a condition of codimension 3, hence the generic fiber is a point.
Moreover, the fiber is nontrivial if there is a pencil of quadrics containing $\PP(L)$, that is if $L$
is in the image of the map $S = \TS \to \Gr(2,V)$. This shows that $\pi$ is the blowup of an ideal
supported at $S$. It remains to check that this is the ideal of~$S$.
For this consider the sheaf $\pi_*(\CO_M(h))$. Since $h$ is very ample over $\Gr(2,V)$
this sheaf is a twist of the ideal in question.
To compute $\pi_*\CO_M(h))$ we use~\eqref{om}. It gives
$$
0 \to S^2\CU \to W^\vee\otimes\CO \to \pi_*\CO_M(h) \to 0.
$$
Comparing with~\eqref{os} we see that $\pi_*\CO_M(h) \cong I_S(3g)$.
Thus $M$ is the blowup of $S$. Moreover, it follows also that $h = 3g - e$.
\end{proof}

Denote the exceptional divisor of $\pi$ by $E$, and the restriction of $\pi$ to $E$ by $p$.
Let also $i:E \to M$ be the embedding. Using~\cite{Or} we obtain the following

\begin{corollary}\label{dbmpi}
We have the following semiorthogonal decomposition
\begin{equation}\label{som0}
\D^b(M) = \langle \pi^*(\D^b(\Gr(2,V))), i_*p^*(\D^b(S)) \rangle.
\end{equation}
\end{corollary}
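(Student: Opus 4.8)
The plan is to deduce this as an immediate instance of Orlov's blowup formula \cite{Or}. The preceding lemma has already done all the geometric work: it identifies $\pi:M \to \Gr(2,V)$ with the blowup of $\Gr(2,V)$ along the surface $S$, with exceptional divisor $E$, projection $p:E \to S$, and inclusion $i:E \to M$. So the only things left are to verify the hypotheses of Orlov's theorem and to count the resulting components.

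First I would record the numerology. The Grassmannian $\Gr(2,V)$ is smooth of dimension $4$, while $S$ is an Enriques surface, hence a smooth projective surface; thus the blown-up center has codimension $2$. Consequently $E$ is the projectivization of the normal bundle $N_{S/\Gr(2,V)}$, a $\PP^1$-bundle over $S$ of relative dimension one. Orlov's theorem then produces a semiorthogonal decomposition of $\D^b(M)$ consisting of $\pi^*\D^b(\Gr(2,V))$ together with $\codim_{\Gr(2,V)} S - 1 = 1$ copy of $\D^b(S)$, embedded by the fully faithful functor $i_*p^*$. This is precisely the asserted decomposition~\eqref{som0}. The semiorthogonality in the order stated is the one-line check that $i^!\pi^*F \cong p^*(F|_S)\otimes\CO_E(E)[-1]$ together with the vanishing $p_*\CO_E(E) = 0$ for the $\PP^1$-bundle $p$, which gives $\Hom(i_*p^*G,\pi^*F) = 0$ for all $F \in \D^b(\Gr(2,V))$ and $G \in \D^b(S)$.

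The main point to watch --- and essentially the only genuine input --- is the smoothness of the center $S$, since Orlov's formula requires the blown-up subvariety to be smooth. This is supplied by the earlier identification of $S$ with the Enriques surface $\TS$, which is smooth by construction. Granting that, the corollary is pure bookkeeping: no mutations and no cohomology computations beyond the vanishing just noted are needed, and it is precisely the low codimension that collapses the general multi-component blowup formula to the clean two-term decomposition stated here.
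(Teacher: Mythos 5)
Your proof is correct and takes essentially the same route as the paper, which deduces the corollary directly from Orlov's blowup formula \cite{Or} applied to the preceding lemma identifying $\pi:M \to \Gr(2,V)$ as the blowup along the smooth surface $S$. Your codimension count (so exactly one copy of $\D^b(S)$) and the semiorthogonality check via $p_*\CO_E(E)=0$ are precisely the bookkeeping the paper leaves implicit.
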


Since $\D^b(\Gr(2,V))$ is generated by an exceptional collection,
we see that $\D^b(M)$ consists of several exceptional objects and the category $\CA_S$.
We will continue using this line in subsection~\ref{ss-mut}, but now
we will look at $M$ from the other point of view.

\subsection{Relation to the double solid}

So far we considered $M$ as a blowup of $\Gr(2,V)$.
Now consider the projection $\rho:M \to \PP(W)$.
By definition of $M$ it is the relative scheme of lines
for the family of quadrics $Q \to \PP(W)$. Therefore the generic fiber of $M$
is a disjoint union of two conics; while over the divisor $D$ of degenerate quadrics
the fiber is a single conic, and over 10 points corresponding to quadrics of corank~2
the fiber is a union of two planes intersecting in a point. Note that the Stein
factorization for the map $\rho:M \to \PP(W)$ is
$$
\xymatrix{M \ar[rr]^\mu \ar[dr]_\rho && X \ar[dl]^f \\ & \PP(W)},
$$
where $f:X \to \PP(W)$ is the double covering ramified in $D$, that is $X$ is the special
double solid we are interested in.

The derived category of a relative scheme of lines for a family of 2-dimensional quadrics
was described in~\cite{K10}. To state the answer obtained therein we need some notation to be introduced.
First of all, for each point $y_i$ corresponding to a quadric of corank 2 choose one of two planes
in the fiber of $M$ over $y_i$. Denote it by $\Sigma_i^+$, and the other by $\Sigma_i^-$.
Further, consider the sheaf $\CB_0$ of even parts of Clifford algebras on $\PP(W)$
associated with the family of quadrics $Q \to \PP(W)$ (see~\cite{K08a} for details).
Let $\D^b(\PP(W),\CB_0)$ denote the derived category of sheaves of $\CB_0$-modules on $\PP(W)$.

\begin{theorem}\label{dbm}
For each choice of $10$ planes $\Sigma^+_i$ there exists a minimal resolution $X^+$ of $X$
and a semiorthogonal decomposition
$$
\D^b(M) = \langle \D^b(X^+), \D^b(\PP(W),\CB_0), \{ \CO_{\Sigma_i^+} \}_{i=1}^{10} \rangle.
$$
\end{theorem}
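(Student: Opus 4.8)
The plan is to deduce the statement from the general description of the derived category of the relative scheme of lines of a family of two-dimensional quadrics established in~\cite{K10}; the work is then to verify that our family $Q \to \PP(W)$ meets the hypotheses of that result and to identify its components in the present geometry. First I would record the degeneration pattern of the family. Since $s$ is generic, the quadric $Q_w$ is a smooth quadric surface for $w \notin D$, has corank $1$ along the smooth locus of $D$, and has corank $2$ precisely at the ten nodes $y_1,\dots,y_{10}$ of $D$, with no deeper degenerations. Correspondingly the fiber of $\rho:M \to \PP(W)$ is a pair of disjoint conics, a single conic, or a union of two planes $\Sigma_i^+ \cup \Sigma_i^-$ meeting in a point, respectively. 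This is exactly the mild degeneration situation treated in~\cite{K10}; moreover $M$ is smooth, being the blowup of $\Gr(2,V)$ along the smooth surface $S$ as shown above, which confirms the genericity needed to apply the theorem.

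Next I would match the geometric data feeding the decomposition. The Stein factorization $M \exto{\mu} X \exto{f} \PP(W)$ realizes $X$ as the double covering ramified in $D$, whose singularities are the ten ordinary double points $x_i$ lying over the nodes $y_i$. Selecting one plane $\Sigma_i^+$ from each pair in the fiber of $\rho$ over $y_i$ is exactly the datum that picks out one of the two local small resolutions of $x_i$; gluing these choices yields the small resolution $X^+$ of the subsection preceding Proposition~\ref{dbxplus}, and accounts for the bijection between the $2^{10}$ resolutions and the $2^{10}$ choices of planes. The sheaf $\CB_0$ of even Clifford parts of $Q \to \PP(W)$ furnishes the middle component $\D^b(\PP(W),\CB_0)$, embedded by the Clifford-module functor of~\cite{K10}, while $\D^b(X^+)$ is embedded by a suitable pullback-type functor along $\mu$, and the ten discarded planes supply the exceptional objects $\CO_{\Sigma_i^+}$. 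With these identifications the theorem of~\cite{K10} produces the decomposition $\D^b(M) = \langle \D^b(X^+), \D^b(\PP(W),\CB_0), \{ \CO_{\Sigma_i^+} \}_{i=1}^{10} \rangle$.

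The crux, and the main obstacle, is the analysis at the corank-$2$ points, where the fiber of $\rho$ jumps from a single conic to a pair of planes and where the small resolution $X^+$ is forced upon us. Here one must verify that each such point contributes a single exceptional object, namely $\CO_{\Sigma_i^+}$; that these ten objects are mutually completely orthogonal and correctly ordered with respect to $\D^b(X^+)$ and $\D^b(\PP(W),\CB_0)$; and, most delicately, that the plane $\Sigma_i^+$ split off as an exceptional object is compatible with the local resolution of $x_i$ chosen in forming $X^+$. It is precisely this compatibility that makes a small resolution $X^+$ --- rather than the singular $X$ or its full blowup $X'$ --- appear as a component, and that pins down which of the $2^{10}$ resolutions is paired with a given choice of the planes $\Sigma_i^+$.
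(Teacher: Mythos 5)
Your proposal is correct and takes essentially the same route as the paper: the paper gives no independent proof of Theorem~\ref{dbm}, stating it as a direct application of the general result of~\cite{K10} on relative schemes of lines for families of two-dimensional quadrics. Your supporting verifications (the degeneration pattern of the generic symmetroid family, the Stein factorization $M \to X \to \PP(W)$, and the correspondence between choices of the planes $\Sigma_i^+$ and the $2^{10}$ small resolutions $X^+$) match the identifications the paper makes in the text surrounding the theorem.
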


We will also need a description of the embedding functors for the first two components of this decomposition.
First of all, the functor $\D^b(X^+) \to \D^b(M)$ can be constructed as follows.
One can check (see~\cite{K10}) that the normal bundle to any of the planes $\Sigma_i^+$ is $\CO(-1) \oplus \CO(-1)$.
Hence one can perform a flip $M \dashrightarrow M^+$ in all these planes. The obtained Moishezon variety $M^+$
has a structure of a $\PP^1$-fibration $\mu_+:M^+ \to X^+$  over $X^+$. In particular, the pullback functor $\mu_+^*$
gives an embedding $\D^b(X^+) \to \D^b(M^+)$. Further, the flip $M \dashrightarrow M^+$ can be represented as
a composition of a blowup $\xi:\TM \to M$ and a blowdown $\xi_+:\TM \to M^+$. The composition
of functors $\xi_*\xi_+^*:\D^b(M^+) \to \D^b(M)$ is also fully faithful by a result of Bondal and Orlov (see~\cite{BO95}).
Composing we will obtain an embedding $\Phi_0 = \xi_*\xi_+^*\mu_+^*:\D^b(X^+) \to \D^b(M)$. What we will need further is the following observation

\begin{lemma}\label{phi0}
Let $f_+:X_+ \to \PP(W)$ be the composition of the contraction $X^+ \to X$ and the double covering $X \to \PP(W)$.
For any object $\CF \in \D^b(\PP(W))$ one has
$$
\Phi_0(f_+^*\CF) \cong \rho_+^*\CF.
$$
Moreover, the functor $\Phi_0$ is $\PP(W)$-linear, that is
$$
\Phi_0(\CG\otimes f_+^*\CF) \cong \Phi_0(\CG)\otimes\rho_+^*\CF.
$$
\end{lemma}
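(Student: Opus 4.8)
The plan is to unwind the definition $\Phi_0 = \xi_*\xi_+^*\mu_+^*$ and reduce the entire computation to the identity $\xi_*\CO_{\TM}\cong\CO_M$ together with the fact that the flip $M\dashrightarrow M^+$ takes place \emph{over} $\PP(W)$. Write $\rho_+ = f_+\circ\mu_+:M^+\to\PP(W)$ for the composite projection. Since the blowup $\xi:\TM\to M$ is centered in the planes $\Sigma_i^+$, which lie in the fibers of $\rho$ over the corank-$2$ points $y_i$, and the blowdown $\xi_+:\TM\to M^+$ contracts divisors lying in the corresponding fibers of $\rho_+$, all four maps are compatible with the projection to $\PP(W)$. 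The geometric heart of the statement is then the commutativity $\rho\circ\xi = \rho_+\circ\xi_+$ of morphisms $\TM\to\PP(W)$.

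Granting this, I would argue as follows. Functoriality of pullback gives $\mu_+^*f_+^*\CF\cong\rho_+^*\CF$, and hence $\xi_+^*\rho_+^*\CF\cong(\rho_+\circ\xi_+)^*\CF\cong(\rho\circ\xi)^*\CF\cong\xi^*\rho^*\CF$. Applying $\xi_*$ and the projection formula yields $\Phi_0(f_+^*\CF)\cong\xi_*\xi^*\rho^*\CF\cong\rho^*\CF\otimes\xi_*\CO_{\TM}$. Since $\xi$ is the blowup of the smooth centre $\bigsqcup_i\Sigma_i^+$, so that $\xi_*\CO_{\TM}\cong\CO_M$, this gives $\Phi_0(f_+^*\CF)\cong\rho^*\CF$, which is the first assertion (the target indeed lies in $\D^b(M)$, where $\rho:M\to\PP(W)$ is the relevant projection).

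The linearity statement follows formally from the same ingredients. Using that $\mu_+^*$ and $\xi_+^*$ are monoidal, $\xi_+^*\mu_+^*(\CG\otimes f_+^*\CF)\cong\xi_+^*\mu_+^*\CG\otimes\xi_+^*\rho_+^*\CF\cong\xi_+^*\mu_+^*\CG\otimes\xi^*\rho^*\CF$. Pushing forward by $\xi$ and applying the projection formula to the $\xi$-pulled-back factor $\xi^*\rho^*\CF$, one obtains $\Phi_0(\CG\otimes f_+^*\CF)\cong\xi_*(\xi_+^*\mu_+^*\CG)\otimes\rho^*\CF\cong\Phi_0(\CG)\otimes\rho^*\CF$, as claimed.

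I expect the main obstacle to be geometric rather than homological: one must justify carefully, from the construction of $M^+$ in~\cite{K10}, that both $\xi$ and $\xi_+$ are performed inside the fibers of the two projections to $\PP(W)$, so that the equality $\rho\circ\xi = \rho_+\circ\xi_+$ genuinely holds. Once this compatibility over $\PP(W)$ and the identity $\xi_*\CO_{\TM}\cong\CO_M$ are established, every remaining step is a routine application of functoriality of pullback, monoidality, and the projection formula, with the finite-$\Tor$-dimension hypotheses automatic since all the varieties in sight are smooth (or have rational Gorenstein singularities) and all the morphisms involved are proper.
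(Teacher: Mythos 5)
Your proposal is correct and matches the paper's intended argument: the paper's entire proof is ``Follows immediately from the projection formula,'' and your unwinding --- the compatibility $\rho\circ\xi = \rho_+\circ\xi_+$ over $\PP(W)$ built into the flip construction of~\cite{K10}, the identity $\xi_*\CO_{\TM}\cong\CO_M$ for the blowup of the smooth centre $\bigsqcup_i\Sigma_i^+$, and the projection formula --- is exactly the spelled-out version of that one-line proof. You also correctly resolve the notational slip in the statement (the target of $\Phi_0$ lies in $\D^b(M)$, so the pullback on the right-hand side is along $\rho:M\to\PP(W)$).
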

\begin{proof}
Follows immediately from the projection formula.
\end{proof}

Now consider the second component.
Note that besides the sheaf of even parts of Clifford algebras $\CB_0$ one has also
the sheaf $\CB_1$ of odd parts of Clifford algebras on $\PP(W)$.
As sheaves on $\PP(W)$ they have the following structure
$$
\CB_0 = \CO \oplus \Lambda^2V(-h) \oplus \Lambda^4V(-2h),\qquad
\CB_1 = V\otimes\CO \oplus \Lambda^3V(-h).
$$
This pair of sheaves naturally extends to a sequence of sheaves $\CB_k$ of $\CB_0$-modules
defined by
$$
\CB_{2k} = \CB_0(kh),
\qquad
\CB_{2k+1} = \CB_1(kh).
$$
For each $k \in \ZZ$ consider the morphism $\CU\boxtimes\CB_{k-1} \to \CB_k$ of sheaves of $\CB_0$-modules
on $\Gr(2,V)\times\PP(W)$ induced by the embedding $\CU\subset V\otimes\CO$ and the action $V\otimes\CB_k \to \CB_{k+1}$.
Let $\alpha:M \to \Gr(2,V) \times \PP(W)$ be the embedding.
Another result of~\cite{K10} is the following

\begin{theorem}[\cite{K10}]\label{phi}
There are isomorphisms
$\Coker(\CU\boxtimes\CB_{k-1} \to \CB_k) \cong \alpha_*\CS_k$,
where the sheaves $\CS_k$ are defined as
\begin{equation}\label{c2k}
0 \to \CO(kh) \to \CS_{2k} \to \CO(g + (k-1)h) \to 0,\qquad
\CS_{2k+1} = (V/\CU)(kh),
\end{equation}
and the extension is nontrivial. Moreover, we have
\begin{equation}\label{rhosk}
\rho_*(\CS_k(-g)) = 0
\end{equation}
for all $k \in \ZZ$.
Finally, the functor $\Phi_1:\D^b(\PP(W),\CB_0) \to \D^b(M)$, $\CF \mapsto \CS_0\otimes_{\CB_0} \rho^*\CF$ is fully faithful
and
$$
\Phi_1(\CB_k) \cong \CS_k.
$$
\end{theorem}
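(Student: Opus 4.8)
The plan is to deduce everything from a fiberwise analysis of Clifford multiplication, organized around the two-periodicity $\CB_{2k}=\CB_0(kh)$, $\CB_{2k+1}=\CB_1(kh)$. Since $\CO_M(kh)=\rho^*\CO_{\PP(W)}(k)$, tensoring the morphism $\CU\boxtimes\CB_{k-1}\to\CB_k$ by $\CO(mh)$ shifts the index by $2m$ and sends the proposed answer $\CS_k$ to $\CS_k(mh)=\CS_{k+2m}$; so it suffices to treat $k=0$ and $k=1$ and then twist. In both cases the morphism is the map of sheaves of $\CB_0$-modules on $\Gr(2,V)\times\PP(W)$ induced by $\CU\subset V\otimes\CO$ and left Clifford multiplication, and over a point $(L,w)$, with $L\subset V$ a plane and $q_w$ the associated quadratic form on $V$, it specializes to $L\otimes\Cl_{k-1}(q_w)\to\Cl_k(q_w)$. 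Note that left multiplication by $\CU$ makes the image a right $\CB_0$-submodule, so each cokernel carries a natural structure of a sheaf of right $\rho^*\CB_0$-modules.

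First I would locate the support of the cokernel. If $L$ contains a vector $v$ with $q_w(v)\neq0$, then $v$ is invertible in $\Cl(q_w)$ up to the scalar $q_w(v)$, whence $v\cdot\Cl_{k-1}(q_w)=\Cl_k(q_w)$ and the map is surjective; thus the cokernel is supported exactly where $q_w|_L=0$, i.e. on $M$. Along $M$ the plane $L$ is isotropic, and choosing a hyperbolic basis adapted to the (generically maximal) isotropic $L$, a direct computation of $L\cdot\Cl_{k-1}\subset\Cl_k$ yields a two-dimensional cokernel: in the odd case it is canonically $V/L$, giving $\CS_1=V/\CU$ and hence $\CS_{2k+1}=(V/\CU)(kh)$; in the even case it is an extension of $\Lambda^2(V/L)(-h)\cong\CO(g-h)$ by the scalar line $\CO$, giving $\CS_0$ and hence $\CS_{2k}=\CS_0(kh)$, in agreement with~\eqref{c2k}. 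Promoting this pointwise picture to the identification $\Coker(\CU\boxtimes\CB_{k-1}\to\CB_k)\cong\alpha_*\CS_k$ and checking that the even extension is nontrivial (equivalently, that $\CS_0$ is the indecomposable spinor module rather than $\CO\oplus\CO(g-h)$) is done by a local computation of the right $\CB_0$-action. The delicate point is the behavior over the degenerate locus $D$ and the ten corank-two points, where the fibers of $M\to\PP(W)$ degenerate to a single conic, respectively to a pair of planes; this is precisely where the choice of the $\Sigma_i^+$ and the small resolution $X^+$ enter, and it must be treated with care.

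For the vanishing~\eqref{rhosk} the same periodicity and the projection formula reduce the claim to $\rho_*((V/\CU)(-g))=0$ and $\rho_*(\CS_0(-g))=0$, since $\rho_*(\CS_k(-g))=\rho_*(\CS_{k\bmod 2}(-g))\otimes\CO_{\PP(W)}(\lfloor k/2\rfloor)$. Both I would establish by cohomology and base change over $\PP(W)$: over a generic $w$ the fiber of $\rho$ is a disjoint pair of conics, and the restrictions of $(V/\CU)(-g)$ and of the two graded pieces of $\CS_0(-g)$ to these conics are acyclic spinor-type bundles, so $R\rho_*$ vanishes on the generic locus; the special fibers over $D$ and the ten points are then checked directly. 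Alternatively one can push forward the Koszul resolution~\eqref{om} of $\CO_M$, suitably twisted, to $\PP(W)$ and observe that every term is $\rho$-acyclic.

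Finally, the full faithfulness of $\Phi_1:\CF\mapsto\CS_0\otimes_{\CB_0}\rho^*\CF$ is the substantive input, and here I would invoke the general description of the derived category of the relative scheme of lines of a two-dimensional quadric fibration established in~\cite{K10} (itself modelled on the quadric-fibration decomposition of~\cite{K08a}): $\Phi_1$ is the relative-spinor Fourier--Mukai functor with kernel $\CS_0$, and its full faithfulness amounts to matching $\RHom(\CS_k,\CS_l)$ with $\RHom_{\CB_0}(\CB_k,\CB_l)$ for all $k,l\in\ZZ$, a computation that reduces via the $\CB_0$-module structure to the already-computed $\CS_k$. The identification $\Phi_1(\CB_k)\cong\CS_k$ is then immediate, being exactly the cokernel computation of the second paragraph read as the statement $\CS_0\otimes_{\CB_0}\CB_k\cong\CS_k$. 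I expect the main obstacle not to be the elementary linear algebra of the generic fiber, but rather the control of $\Phi_1$ and of the sheaves $\CS_k$ over the degenerate fibers, where $\CB_0$ ceases to be Azumaya; this is the step that genuinely uses the geometry of the symmetroid and the compatible choice of small resolution.
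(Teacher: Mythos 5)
First, a point of order: the paper does not prove Theorem~\ref{phi} at all --- it is quoted from \cite{K10} (``Another result of~\cite{K10} is the following''), so there is no internal proof to measure your proposal against; what follows judges it as a standalone argument. Your generic-fiber Clifford computation is correct and is indeed how the cokernels are identified: for $(L,w)\in M$ the plane $L$ is totally isotropic for $q_w$, and in a hyperbolic basis one checks that $L\cdot\Cl_{k-1}(q_w)$ has codimension two in $\Cl_k(q_w)$, with quotient canonically $V/L$ in odd degree and filtered by the scalar line and $\Lambda^2(V/L)$ in even degree, matching~\eqref{c2k} over the locus of nondegenerate quadrics.

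However, there are three genuine problems, all located exactly where the content of the theorem lies. (i) The full faithfulness of $\Phi_1$ --- the substantive assertion --- you dispose of by ``invoking the general description \ldots established in~\cite{K10}''. But Theorem~\ref{phi} \emph{is} that result of~\cite{K10}: as a proof this is circular, and as a citation it makes the rest of your work redundant, since one could equally cite the whole theorem, which is what the paper does. A genuine proof would have to carry out the $\RHom$ computations (or the mutation-theoretic argument of~\cite{K10}) including over the degenerate locus, where $\CB_0$ fails to be Azumaya. (ii) Your verification of~\eqref{rhosk} for even $k$ contains a false step: the graded pieces of $\CS_0(-g)$ restrict to each generic conic fiber as $\CO_{\PP^1}(-2)$ and $\CO_{\PP^1}$, neither of which is acyclic; only the fiberwise \emph{nontrivial} extension, isomorphic to $\CO_{\PP^1}(-1)^{\oplus 2}$, is acyclic. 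Indeed $\rho_*\bigl((\CO\oplus\CO(g-h))(-g)\bigr)\neq 0$, so no argument that sees only the graded pieces can prove~\eqref{rhosk}; the vanishing is essentially equivalent to the fiberwise nontriviality of the extension in~\eqref{c2k}, which your sketch would therefore need to establish first rather than in passing. (iii) Your assertion that the choice of the planes $\Sigma_i^+$ and of the small resolution $X^+$ enters into this theorem is misplaced: the sheaves $\CS_k$, the cokernel identification, and the functor $\Phi_1$ are defined without any choices, and the difficulty over the degenerate fibers is real but choice-free; the $\Sigma_i^+$ and $X^+$ appear only in Theorem~\ref{dbm}, which is a different statement from the one you are proving.
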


On the other hand, it follows from~\cite{K08a}, Theorem~5.5 that $\D^b(\PP(W),\CB_0)$ is generated by an exceptional collection
$\CB_{k-3},\CB_{k-2},\CB_{k-1},\CB_k$, where $k$ can be taken arbitrarily. Indeed there might be an additional component,
the derived category of the intersection of all quadrics in $\PP(V)$ parameterized by $W$. But since $s$ is generic this
is the intersection of 4 generic quadrics in $\PP^3$, and so is empty. Combining this result with the Theorem above
we obtain the following

\begin{corollary}\label{phi1}
The subcategory $\Phi_1(\D^b(\PP(W),\CB_0)) \subset \D^b(M)$ is generated by an exceptional collection
$$
\langle \CS_{k-3},\CS_{k-2},\CS_{k-1},\CS_k \rangle,
$$
where $k$ can be taken arbitrarily.
\end{corollary}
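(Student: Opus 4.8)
The plan is to deduce the statement formally from the two ingredients assembled just above it. On one side we have the result of~\cite{K08a}, Theorem~5.5 (together with the genericity argument showing that the potential residual component vanishes), which provides a full exceptional collection $\CB_{k-3},\CB_{k-2},\CB_{k-1},\CB_k$ in $\D^b(\PP(W),\CB_0)$ for arbitrary $k$. On the other side we have Theorem~\ref{phi}, which asserts that the functor $\Phi_1:\D^b(\PP(W),\CB_0)\to\D^b(M)$ is fully faithful and that $\Phi_1(\CB_j)\cong\CS_j$. The strategy is simply to transport the exceptional collection through $\Phi_1$.

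First I would observe that, being fully faithful, $\Phi_1$ restricts to an equivalence between $\D^b(\PP(W),\CB_0)$ and its essential image $\Phi_1(\D^b(\PP(W),\CB_0))\subset\D^b(M)$. In particular $\Phi_1$ preserves all graded $\Hom$-spaces, so that
$$
\Ext^\bullet(\CS_i,\CS_j)\cong\Ext^\bullet(\Phi_1(\CB_i),\Phi_1(\CB_j))\cong\Ext^\bullet(\CB_i,\CB_j)
$$
for all indices $i,j$. Since $(\CB_{k-3},\CB_{k-2},\CB_{k-1},\CB_k)$ is an exceptional collection, the right-hand side carries the requisite vanishing, whence $(\CS_{k-3},\CS_{k-2},\CS_{k-1},\CS_k)$ is an exceptional collection in $\D^b(M)$. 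Secondly, an equivalence carries a generating set to a generating set, so, as the four objects $\CB_{k-3},\dots,\CB_k$ generate $\D^b(\PP(W),\CB_0)$ as a triangulated category, their images $\CS_{k-3},\dots,\CS_k$ generate the subcategory $\Phi_1(\D^b(\PP(W),\CB_0))$. Combining these two observations yields precisely the asserted identification $\Phi_1(\D^b(\PP(W),\CB_0)) = \langle \CS_{k-3},\CS_{k-2},\CS_{k-1},\CS_k \rangle$ with an exceptional collection on the right.

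There is no genuinely hard step in this argument: it is a formal corollary of full faithfulness of $\Phi_1$, which has already been established in Theorem~\ref{phi}. The only point deserving care --- and which has in fact already been settled in the discussion preceding the statement --- is the \emph{fullness} of the collection $\CB_{k-3},\dots,\CB_k$ inside $\D^b(\PP(W),\CB_0)$, namely the vanishing of the extra component produced by~\cite{K08a}, Theorem~5.5. That component is the derived category of the common zero locus in $\PP(V)$ of the quadrics parameterized by $\PP(W)$; since $s$ is generic this is the intersection of four generic quadrics in $\PP^3$, which is empty, so no additional orthogonal summand survives and the four objects genuinely generate the whole category.
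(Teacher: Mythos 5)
Your proposal is correct and follows essentially the same route as the paper: the paper likewise deduces the corollary by combining the fullness of the exceptional collection $\CB_{k-3},\dots,\CB_k$ in $\D^b(\PP(W),\CB_0)$ (from \cite{K08a}, Theorem~5.5, with the residual component vanishing because the base locus of four generic quadrics in $\PP^3$ is empty) with the full faithfulness of $\Phi_1$ and the identification $\Phi_1(\CB_j)\cong\CS_j$ from Theorem~\ref{phi}. Nothing is missing; the transport of both exceptionality and generation through the fully faithful functor is exactly the paper's (implicit) argument.
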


Combining the above results we can write down the following semiorthogonal decomposition
\begin{equation}\label{dbmpi1}
\D^b(M) = \langle \Phi_0(\D^b(X^+)), \CS_{-1}, \CS_0, \CS_1, \CS_2,  \{ \CO_{\Sigma_i^+} \}_{i=1}^{10} \rangle.
\end{equation}

\subsection{The mutations}\label{ss-mut}

Now our strategy is the following. We start with decomposition~\eqref{som0} of $\D^b(M)$
and then apply a sequence of mutations to get decomposition~\eqref{dbmpi1}.
As a result we will obtain a relation between $\CA_{X^+}$ and $\D^b(S)$.
More precisely, instead of the first component of~\eqref{som0}
we substitute the following exceptional collection on $\Gr(2,V)$:
$$
\D^b(\Gr(2,V)) = \langle \CO(-2g), \CO(-g), V/\CU(-g), \CO, V/\CU, \CO(g) \rangle.
$$
Plugging it into~\eqref{som0} and denoting
\begin{equation}\label{psi0}
\Psi_0 = i_*p^*: \D^b(S) \to \D^b(M),
\end{equation}
we obtain a semiorthogonal decomposition
\begin{equation}\label{som1}
\D^b(M) = \langle \CO(-2g), \CO(-g), V/\CU(-g), \CO, V/\CU, \CO(g), \Psi_0(\D^b(S)) \rangle.
\end{equation}

Now we perform a sequence of mutations.

Step 1. Apply the left mutation through $V/\CU$ and $\CO(g)$ to $\Psi_0(\D^b(S))$. We obtain
\begin{equation}\label{som2}
\D^b(M) = \langle \CO(-2g), \CO(-g), V/\CU(-g), \CO, \Psi_1(\D^b(S)), V/\CU, \CO(g) \rangle,
\end{equation}
where
\begin{equation}\label{psi1}
\Psi_1 = \LL_{\langle V/\CU, \CO(g)\rangle}\circ i_*p^*: \D^b(S) \to \D^b(M).
\end{equation}

Step 2. Translate bundles $V/\CU$ and $\CO(g)$ to the left. By Lemma~\ref{longmut} we obtain
\begin{equation}\label{som3}
\D^b(M) = \langle V/\CU(-g-h), \CO(-h), \CO(-2g), \CO(-g), V/\CU(-g), \CO, \Psi_1(\D^b(S)) \rangle
\end{equation}
since $K_M = - 4g + e = - g - h$.

Step 3. Apply the left mutation through $V/\CU(-g-h)$ and $\CO(-h)$ to $\CO(-2g)$.

\begin{lemma}
We have $\Ext^\bullet(\CO(-h),\CO(-2g)) = \Ext^\bullet(V/\CU(-g-h),\CO(-2g)) = 0$.
\end{lemma}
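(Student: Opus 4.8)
The plan is to reduce both $\Ext$-groups to cohomology on the ambient product $P:=\Gr(2,V)\times\PP(W)$, where the K\"unneth formula and the Borel--Weil--Bott theorem make everything transparent. Write $\alpha\colon M\hookrightarrow P$ for the embedding. All three objects in the statement are restrictions along $\alpha^*$ of the obvious locally free sheaves on $P$ of the same name (twists by $g$ living on the $\Gr(2,V)$-factor, twists by $h$ on the $\PP(W)$-factor). Hence for locally free $\CA,\CB$ on $P$, using that $\alpha$ is a closed embedding (so $\alpha_*$ is right adjoint to $\alpha^*$) together with the projection formula, one gets
$$
\Ext^\bullet_M(\alpha^*\CA,\alpha^*\CB)\cong\Ext^\bullet_P(\CA,\alpha_*\alpha^*\CB)\cong\RGamma\big(P,\,\CA^\vee\otimes\CB\otimes\alpha_*\CO_M\big),
$$
and $\alpha_*\CO_M$ is resolved by~\eqref{om}, i.e. it is quasi-isomorphic to the complex $\big[\CO(-3g-3h)\to S^2\CU(-g-2h)\to S^2\CU(-h)\to\CO\big]$ sitting in degrees $-3,\dots,0$.

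First I would tensor this Koszul complex with $\CA^\vee\otimes\CB$ for each pair. For $\CA=\CO(-h)$, $\CB=\CO(-2g)$ the four graded pieces, in degrees $0,-1,-2,-3$, are $\CO(h-2g)$, $S^2\CU(-2g)$, $S^2\CU(-3g-h)$, $\CO(-5g-2h)$; for $\CA=(V/\CU)(-g-h)$, $\CB=\CO(-2g)$ they are $(V/\CU)^\vee(h-g)$, $\big(S^2\CU\otimes(V/\CU)^\vee\big)(-g)$, $\big(S^2\CU\otimes(V/\CU)^\vee\big)(-2g-h)$, $(V/\CU)^\vee(-4g-2h)$. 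Every piece is an exterior product of a homogeneous bundle on $\Gr(2,V)$ with a line bundle $\CO(jh)$ on $\PP(W)$, so by K\"unneth its cohomology contains the factor $H^\bullet(\PP^3,\CO(j))$. In both complexes the degree $-2$ and degree $-3$ pieces carry $\CO(-h)$ and $\CO(-2h)$ on $\PP(W)$, and since $H^\bullet(\PP^3,\CO(-1))=H^\bullet(\PP^3,\CO(-2))=0$ they contribute nothing.

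Thus everything reduces to the degree $0$ and degree $-1$ pieces, i.e. to the four vanishings $H^\bullet(\Gr(2,V),\CO(-2g))=0$, $H^\bullet(\Gr(2,V),S^2\CU(-2g))=0$, $H^\bullet(\Gr(2,V),(V/\CU)^\vee(-g))=0$ and $H^\bullet(\Gr(2,V),(S^2\CU\otimes(V/\CU)^\vee)(-g))=0$. Each of these bundles is irreducible homogeneous; I would record it as $E_\lambda$ for the appropriate dominant weight $\lambda=(\alpha_1,\alpha_2\mid\beta_1,\beta_2)$, using $\CO(g)=\det\CU^\vee$ (which shifts only the $\CU$-part $\alpha$), the identification $S^2\CU\cong\Sigma^{(0,-2)}\CU^\vee$, and the fact that $(V/\CU)^\vee$ carries the weight $(1,0)$ on the quotient part $\beta$, and then apply Bott. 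With $\rho=(3,2,1,0)$ the four weights $\lambda+\rho$ come out as $(1,0,1,0)$, $(1,-2,1,0)$, $(2,1,2,0)$ and $(2,-1,2,0)$; each has a repeated entry, so all four groups vanish by the singular case of Borel--Weil--Bott. Consequently every graded piece of each tensored Koszul complex is acyclic, the hypercohomology spectral sequence has vanishing $E_1$-page, and both $\Ext^\bullet$-groups vanish.

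The computation has no essential difficulty; the only point requiring care is the weight bookkeeping --- tracking the $\CU$- versus $(V/\CU)$-weights correctly through the dualizations and the twists by $\CO(g)$ before feeding them to Bott. Once the weights are pinned down, each term is forced to die either on the $\PP(W)$-factor (a negative line bundle on $\PP^3$) or on the $\Gr(2,V)$-factor (a repeated entry after adding $\rho$), so there is no genuine obstacle. One could alternatively push down along the blowup $\pi\colon M\to\Gr(2,V)$ and express the answer through $I_S\otimes(-)$, but that would import cohomological input about the Enriques surface $S$, whereas the route above stays entirely within the representation theory of $\Gr(2,V)$ and $\PP^3$.
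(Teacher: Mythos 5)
Your proof is correct, and it takes a genuinely different route from the paper's. The paper exploits the blowup structure $\pi\colon M\to\Gr(2,V)$: using $h=3g-e$ it rewrites the two groups as $H^\bullet(M,\CO(g-e))$ and $H^\bullet(M,\CU^\perp(2g-e))$ with $\CU^\perp=(V/\CU)^\vee$, pushes down via $\pi_*\CO(-e)=I_S$ and the projection formula to $H^\bullet(\Gr(2,V),I_S(g))$ and $H^\bullet(\Gr(2,V),I_S\otimes\CU^\perp(2g))$, and then resolves $I_S$ by~\eqref{os} and applies Borel--Bott--Weil. You instead push forward along the ambient embedding $\alpha\colon M\hookrightarrow\Gr(2,V)\times\PP(W)$, resolve $\CO_M$ by the Koszul complex~\eqref{om}, and finish with K\"unneth and Bott. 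The two computations are in fact closely parallel: once the $\PP(W)$-factor kills your two Koszul terms of $h$-degree $-1$ and $-2$, you are left with exactly the four vanishings
$$
H^\bullet(\CO(-2g))=H^\bullet(S^2\CU(-2g))=H^\bullet((V/\CU)^\vee(-g))=H^\bullet(S^2\CU\otimes(V/\CU)^\vee(-g))=0
$$
on $\Gr(2,V)$, and these are precisely the groups the paper meets when it resolves $I_S(g)$ and $I_S\otimes\CU^\perp(2g)$ by the two-term complexes extracted from~\eqref{os}; so the Borel--Bott--Weil input is identical and only the reduction to $\Gr(2,V)$ differs. Your weight bookkeeping is also right: with the $\CU^\vee$-weights placed before the $(V/\CU)^\vee$-weights and $\rho=(3,2,1,0)$, the shifted weights $(1,0,1,0)$, $(1,-2,1,0)$, $(2,1,2,0)$, $(2,-1,2,0)$ are all singular, and this ordering convention is essential --- with the opposite concatenation the third weight would come out as $(4,2,0,-1)$, which is regular --- so the care you flag there is genuinely needed (one can cross-check $H^\bullet((V/\CU)^\vee(-g))=0$ by hand from the dual tautological sequence). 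What your route buys is self-containedness: it uses only the elementary Koszul resolution~\eqref{om} and never touches the Enriques surface, whereas~\eqref{os} itself required a nontrivial pushforward computation from $\Gr(2,V)\times\Gr(2,W)$. What the paper's route buys is uniformity: the same $I_S$-mechanism is reused verbatim in the lemmas of Steps 4 and 5, where the answer is no longer zero (e.g.\ $\Ext^\bullet(\CO(-h),\CO(-g))=\C[-1]$) and is read off from~\eqref{os} just as easily.
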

\begin{proof}
We have to compute
$$
H^\bullet(M,\CO(h-2g)) = H^\bullet(M,\CO(g-e)),
\qquad\text{and}\qquad
H^\bullet(M,\CU^\perp(h-g)) = H^\bullet(M,\CU^\perp(2g-e)),
$$
where $\CU^\perp := (V/\CU)^\vee$.
Since $\pi_*(\CO(-e)) = I_S$, the projection formula implies that this is the same as
$H^\bullet(\Gr(2,V),I_S(g))$ and $H^\bullet(\Gr(2,M),I_S\otimes\CU^\perp(2g))$ respectively.
Using~\eqref{os} as a resolution of $I_S$ and Borel--Bott--Weil on $\Gr(2,V)$
we obtain the required vanishing.
\end{proof}

Because of the orthogonality proved above and by Lemma~\ref{perpmut} we obtain
\begin{equation}\label{som4}
\D^b(M) = \langle \CO(-2g), V/\CU(-g-h), \CO(-h), \CO(-g), V/\CU(-g), \CO, \Psi_1(\D^b(S)) \rangle.
\end{equation}

Step 4. Apply the left mutation through $\CO(-h)$ to $\CO(-g)$.

\begin{lemma}
We have $\Ext^\bullet(\CO(-h),\CO(-g)) = \C[-1]$.
\end{lemma}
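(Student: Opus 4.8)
The plan is to follow exactly the strategy of the preceding lemma: trade the $\Ext$-space for a cohomology group on $M$, push it down to $\Gr(2,V)$, and finish with Borel--Bott--Weil. First I would write
$$
\Ext^\bullet(\CO(-h),\CO(-g)) = H^\bullet(M,\CO(h-g)) = H^\bullet(M,\CO(2g-e)),
$$
where the second equality uses the relation $h = 3g - e$ established above. Since $\CO(2g)$ is pulled back from $\Gr(2,V)$, the projection formula together with $\pi_*\CO(-e) = I_S$ (and the vanishing of the higher direct images of $\CO(-e)$, whose restriction to the $\PP^1$-fibres of $E \to S$ is $\CO(1)$) gives
$$
H^\bullet(M,\CO(2g-e)) \cong H^\bullet(\Gr(2,V), I_S(2g)).
$$

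To evaluate the right-hand side I would use the resolution~\eqref{os}. Truncating off the last term produces the short exact sequence $0 \to S^2\CU(-3g) \to W^\vee\otimes\CO(-3g) \to I_S \to 0$, and twisting by $2g$ yields
$$
0 \to S^2\CU(-g) \to W^\vee\otimes\CO(-g) \to I_S(2g) \to 0.
$$
Borel--Bott--Weil shows that $\CO(-g)$ is acyclic on $\Gr(2,V)$ (its weight plus $\rho$ has a repeated entry), so the middle term has no cohomology and the connecting homomorphisms of the long exact sequence give isomorphisms $H^i(\Gr(2,V),I_S(2g)) \cong H^{i+1}(\Gr(2,V),S^2\CU(-g))$ for all $i$.

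The heart of the matter is therefore the single computation of $H^\bullet(\Gr(2,V), S^2\CU(-g))$. Writing $S^2\CU(-g) = S^2\CU\otimes\det\CU$, this is the irreducible homogeneous bundle attached to the weight $(-1,-3,0,0)$; adding $\rho = (3,2,1,0)$ gives $(2,-1,1,0)$, whose entries are distinct and are sorted into the dominant chamber by a Weyl element of length $2$. Hence this bundle has cohomology only in degree $2$, where it is one-dimensional. Combined with the isomorphisms above, $H^\bullet(\Gr(2,V),I_S(2g))$ is one-dimensional and concentrated in degree $1$, whence $\Ext^\bullet(\CO(-h),\CO(-g)) \cong \C[-1]$, as claimed.

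The only genuine obstacle I anticipate is bookkeeping in the Borel--Bott--Weil step: one must be careful with the convention relating $S^2\CU\otimes\det\CU$ to its $GL(V)$-weight (sub- versus quotient-bundle conventions) and, crucially, must land the unique nonzero cohomology of $S^2\CU(-g)$ in degree $2$ so that the degree shift in the long exact sequence delivers $\C$ in degree $1$ rather than elsewhere. Everything else in the argument is formal and parallels the previous lemma.
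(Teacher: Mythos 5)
Your proposal is correct and takes essentially the same route as the paper: reduce to $H^\bullet(M,\CO(h-g)) = H^\bullet(M,\CO(2g-e))$, push down by the projection formula and $\pi_*\CO(-e)=I_S$ to $H^\bullet(\Gr(2,V),I_S(2g))$, then evaluate using the resolution~\eqref{os} together with the Borel--Bott--Weil facts $H^\bullet(\Gr(2,V),\CO(-g))=0$ and $H^\bullet(\Gr(2,V),S^2\CU(-g))=\C[-2]$. Your more detailed bookkeeping (the truncated sequence for $I_S(2g)$, the weight computation landing the unique cohomology of $S^2\CU(-g)$ in degree $2$, and the vanishing of higher direct images of $\CO(-e)$) simply spells out steps the paper leaves implicit.
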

\begin{proof}
We have to compute $H^\bullet(M,\CO(h-g)) = H^\bullet(M,\CO(2g-e))$.
Since $\pi_*(\CO(-e)) = I_S$, the projection formula implies that this is the same as
$H^\bullet(\Gr(2,V),I_S(2g))$. By Borel--Bott--Weil we have $H^\bullet(\Gr(2,V),\CO(-g)) = 0$
and $H^\bullet(\Gr(2,V),S^2\CU(-g)) = \C[-2]$. Using~\eqref{os} as a resolution of $I_S$ we
deduce the claim.
\end{proof}

It follows that $\LL_{\CO(-h)}(\CO(-g))$ is the unique extension of $\CO(-h)$ by $\CO(-g)$.
By Theorem~\ref{phi} it is isomorphic to $\CS_0(-g)$. Thus we obtain
\begin{equation}\label{som5}
\D^b(M) = \langle \CO(-2g), V/\CU(-g-h), \CS_0(-g), \CO(-h), V/\CU(-g), \CO, \Psi_1(\D^b(S)) \rangle.
\end{equation}

Step 5. Apply the right mutation through $V/\CU(-g)$ to $\CO(-h)$.

\begin{lemma}
We have $\Ext^\bullet(\CO(-h),V/\CU(-g)) = 0$.
\end{lemma}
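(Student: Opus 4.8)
The plan is to reduce the computation to cohomology on $\Gr(2,V)$ and then apply Borel--Bott--Weil, in exactly the same manner as in the lemmas of Steps~3 and~4. First I would use adjunction to write
$$
\Ext^\bullet(\CO(-h),V/\CU(-g)) = H^\bullet(M,V/\CU(h-g)) = H^\bullet(M,V/\CU(2g-e)),
$$
where the second equality uses $h = 3g-e$. Since $V/\CU$ and $\CO(2g)$ are pulled back from $\Gr(2,V)$ and $\pi_*(\CO(-e)) = I_S$, the projection formula turns this into $H^\bullet(\Gr(2,V),V/\CU\otimes I_S(2g))$, just as in the previous two steps.

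Next I would resolve $I_S$ by the two left-hand terms of~\eqref{os}, namely $0 \to S^2\CU(-3g) \to W^\vee\otimes\CO(-3g) \to I_S \to 0$, and tensor by $V/\CU(2g)$. This produces a short exact sequence
$$
0 \to V/\CU\otimes S^2\CU(-g) \to W^\vee\otimes V/\CU(-g) \to V/\CU\otimes I_S(2g) \to 0,
$$
so it suffices to show that both $V/\CU\otimes S^2\CU(-g)$ and $V/\CU(-g)$ are acyclic on $\Gr(2,V)$. Both are irreducible homogeneous bundles, and a direct application of Borel--Bott--Weil gives the vanishing: in each case the associated weight, once shifted by $\rho=(3,2,1,0)$, acquires a repeated entry, so the bundle has no cohomology at all. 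The long exact sequence then yields $H^\bullet(\Gr(2,V),V/\CU\otimes I_S(2g)) = 0$, which is the claim.

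There is essentially no obstacle here beyond weight bookkeeping: the argument is a verbatim repetition of the mechanism used in Steps~3 and~4, the only new input being the two short Borel--Bott--Weil computations. The single point to keep straight is the identification $\CO(-g) = \det\CU$ (equivalently $\CO(g) = \Lambda^2\CU^\vee$), which feeds the correct weights into the vanishing criterion; getting the signs right there is the one place a careless computation could go astray.
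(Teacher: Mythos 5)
Your proof is correct and follows exactly the paper's route: rewrite the $\Ext$ as $H^\bullet(M,V/\CU(2g-e))$, push forward via $\pi_*\CO(-e)=I_S$, resolve $I_S$ by the left part of~\eqref{os}, and apply Borel--Bott--Weil, where indeed both weights (those of $V/\CU\otimes S^2\CU(-g)$ and $V/\CU(-g)$) become singular after the $\rho$-shift. The paper's proof is just a terser statement of the same argument, leaving the weight bookkeeping implicit.
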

\begin{proof}
We have to compute $H^\bullet(M,V/\CU(h-g)) = H^\bullet(M,V/\CU(2g-e))$.
Since $\pi_*(\CO(-e)) = I_S$, the projection formula implies that this is the same as
$H^\bullet(\Gr(2,M),I_S\otimes V/\CU(2g))$.
Using~\eqref{os} as a resolution of $I_S$ and Borel--Bott--Weil on $\Gr(2,V)$
we obtain the required vanishing.
\end{proof}

Because of the orthogonality proved above and by Lemma~\ref{perpmut} we obtain
\begin{equation}\label{som6}
\D^b(M) = \langle \CO(-2g), V/\CU(-g-h), \CS_0(-g), V/\CU(-g), \CO(-h), \CO, \Psi_1(\D^b(S)) \rangle.
\end{equation}

Step 6. Translate bundle $\CO(-2g)$ to the right. By Lemma~\ref{longmut} we obtain
\begin{equation}\label{som7}
\D^b(M) = \langle V/\CU(-g-h), \CS_0(-g), V/\CU(-g), \CO(-h), \CO, \Psi_1(\D^b(S)), \CO(h-g) \rangle.
\end{equation}

Step 7. Apply the right mutation through $\CO(h-g)$ to $\Psi_1(D^b(S))$. We obtain
\begin{equation}\label{som8}
\D^b(M) = \langle V/\CU(-g-h), \CS_0(-g), V/\CU(-g), \CO(-h), \CO, \CO(h-g), \Psi_2(\D^b(S)) \rangle,
\end{equation}
where
\begin{equation}\label{psi2}
\Psi_2 = \RR_{\CO(h-g)}\circ \LL_{\langle V/\CU, \CO(g)\rangle}\circ i_*p^*: \D^b(S) \to \D^b(M).
\end{equation}

Step 8. Apply the left mutation through $\CO$ to $\CO(h-g)$. Analogously to step 4 we have $\LL_\CO(\CO(h-g)) = \CS_0(h-g) = \CS_2(-g)$.
Thus we obtain
\begin{equation}\label{som9}
\D^b(M) = \langle V/\CU(-g-h), \CS_0(-g), V/\CU(-g), \CO(-h), \CS_2(-g), \CO, \Psi_2(\D^b(S)) \rangle.
\end{equation}

Step 9. Finally, apply the left mutation through $\CO(-h)$ to $\CS_2(-g)$.

\begin{lemma}
We have $\Ext^\bullet(\CO(-h),\CS_2(-g)) = 0$.
\end{lemma}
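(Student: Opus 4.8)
The plan is to compute the space $\Ext^\bullet(\CO(-h),\CS_2(-g))$ directly, following the same template that governed the preceding vanishing lemmas in Steps~3, 4, and~5. Since $\CS_2 = \CS_0(h)$, we have $\CS_2(-g) = \CS_0(h-g)$, and the relevant graded vector space is
$$
\Ext^\bullet(\CO(-h),\CS_2(-g)) \cong H^\bullet(M,\CS_0(h-g)\otimes\CO(h)) = H^\bullet(M,\CS_0(2h-g)).
$$
The natural tool is the defining extension for $\CS_0$ from~\eqref{c2k}, namely $0 \to \CO \to \CS_0 \to \CO(g-h) \to 0$. Twisting by $\CO(2h-g)$ gives the short exact sequence
$$
0 \to \CO(2h-g) \to \CS_0(2h-g) \to \CO(h) \to 0,
$$
so it suffices to compute the cohomology of the two line bundles $\CO(2h-g)$ and $\CO(h)$ on $M$ and to understand the connecting map in the induced long exact sequence.

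First I would push each of these down to $\Gr(2,V)$ via $\pi$, exactly as in the earlier lemmas. Recalling $h = 3g - e$, we rewrite $\CO(h) = \CO(3g-e)$ and $\CO(2h-g) = \CO(5g-2e)$, and use $\pi_*\CO(-e) = I_S$ together with the projection formula. The subtlety is that only $\CO(-e)$ (i.e.\ a single power of the exceptional class) pushes down cleanly to the ideal sheaf $I_S$; for $\CO(-2e)$ one must instead use $\pi_*\CO(-2e) = I_S^{(2)}$ or, more practically, feed the pushforward the Koszul resolution~\eqref{om} of $\CO_M$ twisted appropriately rather than relying on a naive formula. Concretely, I would compute $R\pi_*$ of each twist by tensoring~\eqref{om} with the line bundle in question and taking cohomology term by term on $\Gr(2,V)\times\PP(W)$, reducing everything to Borel--Bott--Weil computations on $\Gr(2,V)$ combined with the (trivial) cohomology of line bundles on the $\PP(W)$ factor. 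This yields the cohomology of $\CO(h)$ and $\CO(2h-g)$ on $M$ separately.

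Having both cohomology groups, I would then invoke the long exact sequence associated to $0 \to \CO(2h-g) \to \CS_0(2h-g) \to \CO(h) \to 0$. The desired conclusion $H^\bullet(M,\CS_0(2h-g))=0$ will follow either because both outer terms already vanish, or because the only surviving cohomology in the two line bundles sits in complementary degrees and is cancelled by the connecting homomorphism. Establishing that the connecting map is an isomorphism is where I expect the real content to lie: the nonvanishing of this map is precisely the assertion that the extension class defining $\CS_0$ in~\eqref{c2k} is nonzero, which Theorem~\ref{phi} guarantees. So the logical backbone is to reduce the vanishing to the nontriviality of the extension, then cite Theorem~\ref{phi} for that nontriviality.

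The main obstacle, as in the surrounding steps, is organizing the Borel--Bott--Weil bookkeeping: on $\Gr(2,V)$ the relevant bundles obtained from the Koszul resolution~\eqref{om} are twists of $S^2\CU$, and one must track which cohomology groups survive in which degree after applying $I_S$ via~\eqref{os}. I would expect that, analogously to the computation $H^\bullet(\Gr(2,V),S^2\CU(-g)) = \C[-2]$ used in Step~4, a single one-dimensional cohomology group survives in each of the two line-bundle terms, placed so that the connecting map matches them and the total cohomology of $\CS_2(-g)$ vanishes. Once the vanishing $\Ext^\bullet(\CO(-h),\CS_2(-g)) = 0$ is in hand, Lemma~\ref{perpmut} applies and the left mutation reduces to a transposition, advancing the sequence of mutations toward the target decomposition~\eqref{dbmpi1}.
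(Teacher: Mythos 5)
Your reduction to $H^\bullet(M,\CS_0(2h-g))$ and the twisted extension sequence $0 \to \CO(2h-g) \to \CS_0(2h-g) \to \CO(h) \to 0$ is correct, but there is a genuine gap at exactly the point you flag as carrying the real content. First, the outer terms are not one-dimensional: from $\pi_*\CO_M(h) \cong I_S(3g)$, the resolution $0 \to S^2\CU \to W^\vee\otimes\CO \to I_S(3g) \to 0$ and Borel--Bott--Weil one gets $H^\bullet(M,\CO(h)) \cong W^\vee$ concentrated in degree $0$, and correspondingly $H^\bullet(M,\CO(2h-g)) \cong W^\vee$ concentrated in degree $1$. So the lemma is equivalent to the connecting map being an isomorphism between two \emph{four}-dimensional spaces. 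Your justification --- that this follows because the extension class of \eqref{c2k} is nonzero --- is insufficient: that class lives in $\Ext^1(\CO(h),\CO(2h-g)) \cong H^1(M,\CO(h-g)) \cong \C$, and cup product with a nonzero class is a priori just a nonzero map (if that); a nonzero map between four-dimensional spaces need not be injective. What is actually needed is that the extension is nontrivial \emph{fiberwise over $\PP(W)$}, equivalently that the derived pushforward $\rho_*(\CS_0(-g))$ vanishes --- and that is precisely \eqref{rhosk}, a strictly stronger statement than the global nontriviality you quote from Theorem~\ref{phi}.

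Once \eqref{rhosk} is granted, however, your whole computation becomes unnecessary, and this is the paper's one-line proof: $\CO(h)$ is pulled back along $\rho:M \to \PP(W)$ and $\CS_2(-g)\otimes\CO(h) \cong \CS_4(-g)$ (since $\CS_{2k}=\CS_0(kh)$), so the projection formula gives
$$
\Ext^\bullet(\CO(-h),\CS_2(-g)) \cong H^\bullet(M,\CS_4(-g)) \cong H^\bullet\bigl(\PP(W),\rho_*(\CS_4(-g))\bigr) = 0
$$
directly by \eqref{rhosk}. Your route instead pushes down along the blowup $\pi:M \to \Gr(2,V)$, which is the hard direction here: it forces you through $\pi_*\CO(-2e) = I_S^2$, hence through cohomology of $I_S^2(5g)$, which requires the conormal bundle of $S$ in $\Gr(2,V)$ --- and after all that bookkeeping you would still need the relative input \eqref{rhosk} (or an equivalent fiberwise computation) to identify the connecting map. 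The structural lesson, visible throughout Steps 3--9 of the paper, is that twists by $g$ and $e$ are best handled via $\pi$ and \eqref{os}, while anything of the form $\CS_k(-g)$ twisted by powers of $h$ should be pushed down along $\rho$, where \eqref{rhosk} kills it outright.
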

\begin{proof}
Follows immediately from~\eqref{rhosk}.
\end{proof}

Because of the orthogonality and by Lemma~\ref{perpmut} we obtain
$$
\D^b(M) = \langle V/\CU(-g-h), \CS_0(-g), V/\CU(-g), \CS_2(-g), \CO(-h), \CO, \Psi_2(\D^b(S)) \rangle.
$$
Finally, rewriting $V/\CU = \CS_1$, $V/\CU(-h) = \CS_{-1}$ (see Theorem~\ref{phi}) we see that we have obtained
\begin{equation}\label{som10}
\D^b(M) = \langle \CS_{-1}(-g), \CS_0(-g), \CS_1(-g), \CS_2(-g), \CO(-h), \CO, \Psi_2(\D^b(S)) \rangle.
\end{equation}

\subsection{Multiple elliptic fibers}

Now we have to take into account the structure of $\D^b(S)$, namely the semiorthogonal decomposition
\begin{equation}\label{dbs}
\D^b(S) = \langle \{ \CO_S(-F_i^+) \}_{i=1}^{10}, \CA_S \rangle,
\end{equation}
where $F_i^\pm$ are the multiple fibers of the 10 elliptic pencils on $S$ and $\CA_S = {}^\perp\langle \{ \CO_S(-F_i^+) \}_{i=1}^{10} \rangle$.
It turns out that the functor $\Psi_2$ relates the multiple elliptic fibers $F_i^\pm$
to the planes $\Sigma^\pm_i$ of $M$.

Let $y_i \in \PP(w)$ be one of 10 nodes of the symmetroid $D$.
Then as we know the quadric $Q_{y_i}$ is a union of two planes,
and $\rho^{-1}(y_i) = \Sigma_i^+ \cup \Sigma_i^-$ is also a union of two planes.

\begin{lemma}
The exceptional divisor $E$ of the blowup $\pi:M \to \Gr(2,V)$ intersects $\Sigma_i^\pm$ along an elliptic curve.
Its image $F_i^\pm = \pi(\Sigma_i^\pm \cap E) \subset S$ is a multiple elliptic fiber of $S$.
\end{lemma}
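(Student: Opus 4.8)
The plan is to identify the intersection $\Sigma_i^+\cap E$ geometrically, realize it as a plane cubic, and then read off that it is a half-fiber from the numerical geometry of $S$. Write the rank-$2$ quadric as $Q_{y_i}=P_1\cup P_2$ with $P_j=\PP(V_j)$ for three-dimensional $V_j\subset V$, and let $\Sigma_i^+$ be the plane of lines contained in $P_1$. Since on $\Sigma_i^+$ the $\PP(W)$-coordinate is constant equal to $y_i$, the projection $\pi$ maps $\Sigma_i^+$ isomorphically onto the plane $\Sigma_i'=\{L\mid \PP(L)\subset P_1\}\subset\Gr(2,V)$, which is one of the two families of $\PP^2$'s on the Plücker quadric and is therefore linearly embedded, so that $\CO(g)|_{\Sigma_i'}\cong\CO_{\PP^2}(1)$. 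As $E=\pi^{-1}(S)$, this gives $\Sigma_i^+\cap E\cong \Sigma_i'\cap S$, and hence $F_i^+=\pi(\Sigma_i^+\cap E)=\Sigma_i'\cap S$.

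To compute this intersection I would use that $L\in\Sigma_i'$ lies on $S$ exactly when $\PP(L)$ is contained in a pencil of the web, i.e. in some quadric of $W$ other than $Q_{y_i}$. Restriction of quadrics to $P_1$ annihilates $Q_{y_i}$ and (for generic $s$) identifies $W/\langle y_i\rangle$ with a three-dimensional space $\bar W\subset S^2V_1^\vee$ of conics on $P_1$. Restricting these conics to the varying line $\PP(L)$ defines a morphism of rank-$3$ bundles $\psi\colon \bar W\otimes\CO_{\Sigma_i'}\to S^2\CU^\vee$ on $\Sigma_i'\cong\PP^2$, and $L$ lies on $S$ precisely when $\psi$ fails to be injective. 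Thus $F_i^+$ is the degeneracy locus $\{\det\psi=0\}$; since $\det\psi$ is a section of $\det S^2\CU^\vee=(\det\CU^\vee)^{3}=\CO(3g)|_{\Sigma_i'}\cong\CO_{\PP^2}(3)$, the curve $F_i^+$ is a plane cubic, and for generic $s$ it is smooth, hence an elliptic curve. This proves the first assertion.

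It remains to see that $F_i^+$ is a multiple fiber. First, $(F_i^+)^2=0$: since $F_i^+$ is a smooth curve of genus $1$ and $K_S$ is numerically trivial, adjunction $2p_a(F_i^+)-2=(F_i^+)^2+F_i^+\cdot K_S$ gives the vanishing at once. Next I would show the class of $F_i^+$ is primitive in $\mathrm{Num}(S)$. For $j\ne i$ the planes $\Sigma_i'$ and $\Sigma_j'$ meet in $\Gr(2,V)$ in the single point corresponding to the line $P_1^{(i)}\cap P_1^{(j)}$; that line lies on both $Q_{y_i}$ and $Q_{y_j}$, hence on the whole pencil they span, so it is a Reye line and the point lies on $S$. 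Therefore $F_i^+\cap F_j^+$ consists of one (for generic $s$, reduced) point, so $F_i^+\cdot F_j^+=1$, which forces $F_i^+$ to be primitive. Finally, a primitive effective curve of arithmetic genus $1$ and self-intersection $0$ on an Enriques surface is a half-fiber of the genus-$1$ pencil $|2F_i^+|$, i.e. a multiple fiber of multiplicity $2$; this is the standard structure theory of genus-$1$ fibrations on Enriques surfaces. The identical argument applied to $P_2$ handles $F_i^-$.

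The main obstacle is the middle step: setting up the restriction map $\psi$ correctly and checking that its degeneracy scheme is reduced and equals $F_i^+$, so that the cubic really is $\Sigma_i^+\cap E$, together with the genericity of $s$ needed for smoothness and for the transversality of $F_i^+$ and $F_j^+$. Once $F_i^+$ is known to be a smooth genus-$1$ curve with $(F_i^+)^2=0$, the identification as a half-fiber is essentially forced: $(F_i^+)^2=0$ is automatic from adjunction, primitivity follows from the clean count $F_i^+\cdot F_j^+=1$, and the general theory then does the rest.
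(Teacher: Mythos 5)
Your proof is correct, but it takes a genuinely different route from the paper's. The paper obtains the cubic purely from divisor classes: by its earlier lemma $h = 3g - e$, and since $\rho$ contracts $\Sigma_i^\pm$ to the point $y_i$ (so $h$ restricts trivially) while $\pi$ embeds it as a plane (so $g$ restricts to $\CO_{\PP^2}(1)$), one gets $\CO(e)|_{\Sigma_i^\pm} \cong \CO_{\PP^2}(3)$; after observing that $\Sigma_i^\pm \not\subset E$ (else $S$ would contain a plane), the intersection is a plane cubic, and the paper then simply cites the classical fact that the only plane cubics on the Fano-embedded nodal Enriques surface $S \subset \Gr(2,V)$ are the multiple elliptic fibers. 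You instead compute $F_i^+ = \Sigma_i' \cap S$ directly as a determinantal curve, the degeneracy locus of $\psi \colon \bar{W}\otimes\CO \to S^2\CU^\vee$ on the $\beta$-plane $\Gr(2,V_1)$ (you should note that $\det\psi \not\equiv 0$, which again holds because $S$ contains no plane), and then re-derive the half-fiber property from scratch: adjunction gives $(F_i^+)^2 = 0$, primitivity follows from $F_i^+ \cdot F_j^+ = 1$, and the structure theory of genus-one fibrations on Enriques surfaces finishes. The paper's route buys brevity, reusing its blowup lemma and classical Reye-congruence theory; yours buys self-containedness (no appeal to the classification of plane cubics on $S$) plus explicit equations for $F_i^\pm$, at the cost of the genericity checks you flag. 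Both of those soft spots close cleanly. The reducedness of $F_i^+ \cap F_j^+$ is automatic, not generic: two distinct $\beta$-planes in $\Gr(2,V)$ meet in a single reduced point $p$, so $T_p\Sigma_i' \cap T_p\Sigma_j' = 0$, and since $T_pF_i^+ \subset T_p\Sigma_i'$ and $T_pF_j^+ \subset T_p\Sigma_j'$, the two curves have distinct tangent lines at $p$, whence $F_i^+ \cdot F_j^+ = 1$ exactly. Better still, you can bypass this step entirely: $F_i^+ \cdot g = 3$ is odd, so the class of $F_i^+$ is not $2$-divisible in $\mathrm{Num}(S)$, and non-$2$-divisibility is all your final step needs, since a non-multiple fiber of a genus-one pencil on an Enriques surface is numerically twice a half-fiber and hence $2$-divisible.
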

\begin{proof}
Note that $e = 3g - h$, hence $\CO(e)_{|\Sigma_i^\pm} = \CO(3)$.
On the other hand, $\Sigma_i^\pm$ is not contained in $E$, since otherwise
we would have $\PP^2 = \pi(\Sigma_i^\pm) \subset S$ which is impossible.
Hence the intersection is a plane cubic. But the only plane cubic curves
on $S$ are the multiple elliptic fibers.
\end{proof}

\begin{proposition}\label{FS}
Let $F_i^\pm = \pi(E \cap \Sigma^\pm_i)$ be a multiple elliptic fiber of $S$. Then
$$
\Psi_2(\CO_S(2g - F_i^\pm)) \cong \CO_{\Sigma_i^\pm}(-1).
$$
\end{proposition}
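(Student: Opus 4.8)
The plan is to strip the two mutation functors off $\Psi_2$ (see~\eqref{psi2}) one at a time and reduce the whole statement to a single computation of the inner left mutation $\Psi_1 = \LL_{\langle V/\CU, \CO(g)\rangle}\circ i_*p^*$. Throughout write $\Sigma = \Sigma_i^\pm$ and $\CL = \CO_S(2g - F_i^\pm)$. First I would record the geometry of the plane $\Sigma$. Since $\rho$ contracts $\Sigma$ to the point $y_i$, we have $\CO(h)|_\Sigma \cong \CO_\Sigma$; since $\Sigma$ is the Grassmannian of lines in $P_i^\pm = \PP(V')$ for a three-dimensional $V'\subset V$, and the Plücker embedding restricts to the linear embedding $\Gr(2,V') = \PP^2 \hookrightarrow \Gr(2,V)$, we have $\CO(g)|_\Sigma \cong \CO_{\PP^2}(1)$. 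Combined with $e = 3g - h$ this gives $\CO(e)|_\Sigma \cong \CO_{\PP^2}(3)$, consistent with $C := E\cap\Sigma$ being the plane cubic cut out by $e$ and identified by $p$ with the multiple fibre $F_i^\pm$. In particular $\CO_\Sigma(-1) = \CO(-g)|_\Sigma = \CO(h-g)|_\Sigma$. Finally, since $\pi$ restricts on $E$ to $E\xrightarrow{p}S\hookrightarrow\Gr(2,V)$, we have $p_*\CO_E \cong \CO_S$, $\CU|_E \cong p^*(\CU|_S)$ and $\CO(g)|_E \cong p^*\CO_S(g)$, so that all pushforwards along $p$ are governed by the projection formula.

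I claim it suffices to prove $\Psi_1(\CL) \cong I_\Sigma(h-g)[1]$. Granting this, $\Psi_2(\CL) = \RR_{\CO(h-g)}(\Psi_1(\CL)) \cong \RR_{\CO(h-g)}(I_\Sigma(h-g)[1])$, and I would check that the right-hand side is $\CO_\Sigma(-1)$. From the sequence $0\to I_\Sigma \to \CO_M \to \CO_\Sigma \to 0$ and the adjunction identity
\[
\det N_{\Sigma/M} = \omega_\Sigma\otimes(\omega_M|_\Sigma)^{-1} = \CO_{\PP^2}(-3)\otimes\CO_{\PP^2}(1) = \CO_{\PP^2}(-2)
\]
(using $K_M = -g-h$), one gets $\Ext^\bullet(\CO_\Sigma,\CO_M) = H^{\bullet-2}(\PP^2,\CO(-2)) = 0$, hence $\Ext^\bullet(I_\Sigma(h-g),\CO(h-g)) = \C$ concentrated in degree $0$. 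The right-mutation triangle~\eqref{excmut} then presents $\RR_{\CO(h-g)}(I_\Sigma(h-g)[1])$ as the fibre of the canonical map $I_\Sigma(h-g)[1]\to\CO(h-g)[1]$, which is the twisted, shifted inclusion $I_\Sigma\hookrightarrow\CO_M$, so its fibre is $\CO_\Sigma(h-g) = \CO_\Sigma(-1)$.

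It then remains to compute $\Psi_1(\CL) = \LL_{\langle V/\CU,\CO(g)\rangle}(i_*p^*\CL)$. Using $i_*$-adjunction together with the identifications above, the two Ext-spaces controlling this mutation reduce to cohomology on the Enriques surface:
\[
\Ext^\bullet(\CO(g), i_*p^*\CL) \cong H^\bullet(S,\CO_S(g-F_i^\pm)),
\qquad
\Ext^\bullet(V/\CU, i_*p^*\CL) \cong H^\bullet(S,\CU^\perp|_S\otimes\CL),
\]
where $\CU^\perp = (V/\CU)^\vee$. I would evaluate these using the intersection theory of $S\subset\Gr(2,V)$ — notably $(F_i^\pm)^2 = 0$ and the intersection numbers of $g$ and $F_i^\pm$ — together with the resolution~\eqref{os} and Borel--Bott--Weil, and then assemble the resulting evaluation maps into the iterated cone defining $\Psi_1(\CL)$, verifying that it is isomorphic to $I_\Sigma(h-g)[1]$.

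The main obstacle is this last step. Two features make it delicate. First, the cohomology of the line bundles above on $S$ is sensitive to the precise divisor class of the chosen half-fibre $F_i^\pm$, so the Enriques-surface bookkeeping must be carried out carefully. Second, and more seriously, $i_*p^*\CL$ is supported on the whole exceptional divisor $E$, whereas the target $I_\Sigma(h-g)[1]$ has full support on $M$; thus the left mutation must simultaneously produce full support and single out the one plane $\Sigma_i^\pm$. Making the cone collapse exactly onto $I_{\Sigma_i^\pm}$ is where the identification $C = E\cap\Sigma_i^\pm \cong F_i^\pm$ from the preceding lemma — and hence the matching of the half-fibres $F_i^\pm$ with the planes $\Sigma_i^\pm$ — is indispensable.
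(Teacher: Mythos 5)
Your outer reductions are correct, and they essentially reproduce the skeleton of the paper's argument. Since the map $\CO(h-g)\to\CO_{\Sigma}(-1)$ is the restriction (a surjection of sheaves), one has $\LL_{\CO(h-g)}(\CO_{\Sigma}(-1))\cong I_{\Sigma}(h-g)[1]$, so your intermediate claim $\Psi_1(\CL)\cong I_{\Sigma}(h-g)[1]$ is exactly the paper's key identity~\eqref{llll} in disguise; and your direct computation of $\RR_{\CO(h-g)}(I_{\Sigma}(h-g)[1])\cong\CO_{\Sigma}(-1)$, via $\Ext^\bullet(I_{\Sigma},\CO_M)=\C$ and the right-mutation triangle~\eqref{excmut}, is a legitimate substitute for the paper's check that $\CO_{\Sigma}(-1)\in{}^\perp\CO(h-g)$ so that $\RR_{\CO(h-g)}$ undoes $\LL_{\CO(h-g)}$.

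The genuine gap is that the heart of the proposition, namely the proof that $\Psi_1(\CL)=\LL_{\langle V/\CU,\CO(g)\rangle}(i_*p^*\CL)\cong I_{\Sigma}(h-g)[1]$, is never carried out, and the strategy you sketch for it would not suffice as stated. Computing the graded dimensions of $\Ext^\bullet(\CO(g),i_*p^*\CL)$ and $\Ext^\bullet(V/\CU,-)$ by Borel--Bott--Weil (even granting the correct bookkeeping for the iterated mutation, where the second Ext-space must be taken against $\LL_{\CO(g)}(i_*p^*\CL)$ rather than $i_*p^*\CL$) only fixes the shape of the mutation triangles; it cannot identify the resulting iterated cone with $I_{\Sigma}(h-g)[1]$, because that requires knowing the canonical maps, i.e.\ producing an explicit complex that connects an object supported on $E$ with an object of full support co-supported on the single plane $\Sigma$. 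You name this obstacle yourself but offer no mechanism to overcome it. The paper's mechanism is geometric: $\pi(\Sigma)=\Gr(2,3)\subset\Gr(2,V)$ is the zero locus of a regular section of $\CU^\vee$, giving the pulled-back Koszul resolution~\eqref{ppsk} of $\CO_{\pi^{-1}(\pi(\Sigma))}$; the scheme $\pi^{-1}(\pi(\Sigma))$ decomposes into the two components $\Sigma$ and $p^{-1}(F)$ glued along the plane cubic $E\cap\Sigma$, giving~\eqref{ppsu}; combining these with~\eqref{ipf} and~\eqref{ips} into a bicomplex exhibits one object quasi-isomorphic both to $[\CO(g)\to\CU^\vee(g)\to i_*p^*\CO_S(2g-F)]$ and to $[\CO(h-g)\to\CO_{\Sigma}(-1)]$, after which the orthogonality checks (via~\eqref{os} and Borel--Bott--Weil, which is where that tool legitimately enters) identify these two complexes with the two mutations. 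Some such construction --- the Koszul resolution of the plane $\pi(\Sigma)$ and the component decomposition of its preimage are the essential ideas --- is indispensable, and without it your proof cannot be completed.
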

\begin{proof}
To unburden the notation we will write $\Sigma$ instead of $\Sigma_i^\pm$ and $F$ instead of $F_i^\pm$.
Now consider several exact sequences. First, note that $\pi(\Sigma) = \Gr(2,3) \subset \Gr(2,V)$ is the zero locus
of a regular section of a vector bundle $\CU^\vee$; hence we have the following Koszul resolution on $\Gr(2,V)$
$$
0 \to \CO(-g) \to \CU \to \CO \to \CO_{\pi(\Sigma)} \to 0.
$$
Identifying $\CU = \CU^\vee(-g)$ and pulling back to $M$ we obtain
\begin{equation}\label{ppsk}
0 \to \CO(-g) \to \CU^\vee(-g) \to \CO \to \CO_{\pi^{-1}(\pi(\Sigma))} \to 0.
\end{equation}
On the other hand, the preimage $\pi^{-1}(\pi(\Sigma))$ clearly has two components,
one is $\Sigma$ itself, and the other is $p^{-1}(\pi(\Sigma) \cap S) = p^{-1}(F)$.
The components intersect along the cubic curve $\Sigma \cap p^{-1}(F)$.
So, the structure sheaf of $\pi^{-1}(\pi(\Sigma))$ fits into the following exact sequence
\begin{equation}\label{ppsu}
0 \to \CO_{\Sigma}(-3) \to \CO_{\pi^{-1}(\pi(\Sigma))} \to i_*p^*\CO_{F} \to 0.
\end{equation}
Also we have the following Koszul resolutions
\begin{equation}\label{ipf}
0 \to i_*p^*\CO_S(-F) \to i_*p^*\CO_S \to i_*p^*\CO_{F} \to 0
\end{equation}
and
\begin{equation}\label{ips}
0 \to \CO(-e) \to \CO \to i_*p^*\CO_S \to 0.
\end{equation}
Combining~\eqref{ppsk}, \eqref{ppsu}, \eqref{ipf}, and~\eqref{ips} and twisting everything by $\CO(2g)$ we obtain a bicomplex
$$
\xymatrix{
&&& \CO(2g-e) \ar[r] \ar[d] & \CO_{\Sigma}(-1) \ar[d] \ar[r] & 0\\
0 \ar[r] & \CO(g) \ar[r] \ar[d] & \CU^\vee(g) \ar[r] \ar[d] & \CO(2g) \ar[r] \ar[d] & \CO_{\pi^{-1}(\pi(\Sigma))}(2g) \ar[r] \ar[d] & 0 \\
& 0 \ar[r] & i_*p^*\CO_S(2g-F) \ar[r] & i_*p^*\CO_S(2g) \ar[r] & i_*p^*\CO_{F}(2g) \ar[r] & 0
}
$$
Here the middle row is~\eqref{ppsk}, the bottom row is~\eqref{ipf},
the third column is~\eqref{ips}, and the fourth column is~\eqref{ppsu},
everything twisted by $\CO(g)$ as it was already mentioned.

Since the bottom two rows of the bicomplex are exact, the total complex is quasiisomorphic to the top row,
which can be rewritten as
\begin{equation}\label{rpi}
0 \to \CO(h-g) \to \CO_{\Sigma}(-1) \to 0
\end{equation}
(the map here is just the restriction
$\CO(h-g) \to \CO(h-g)_{|\Sigma} \cong \CO(2g-e)_{|\Sigma} \cong \CO_{\Sigma}(-1)$).
On the other hand, since the right two columns are exact, the total complex is quasiisomorphic to
\begin{equation}\label{lf}
0 \to \CO(g) \to \CU^\vee(g) \to i_*p^*\CO_S(2g-F) \to 0.
\end{equation}
Combining these observations we conclude that complexes~\eqref{rpi} and~\eqref{lf} are quasiisomorphic.

Now we claim that complex~\eqref{rpi} is quasiisomorphic to $\LL_{\CO(h-g)}(\CO_{\Sigma}(-1))$
and complex~\eqref{lf} is quasiisomorphic to $\LL_{\langle\CO(g),\CU^\vee(g)\rangle}(i_*p^*\CO_S(2g-F))$.
Indeed, the first is clear since
$$
\Ext^\bullet(\CO(h-g),\CO_{\Sigma}(-1)) \cong
H^\bullet(\CO(g-h)\otimes\CO_{\Sigma}(-1)) \cong
H^\bullet(\Sigma,\CO) \cong \C.
$$
For the second it suffices to check that~\eqref{lf}, or equivalently~\eqref{rpi}, is orthogonal to $\CO(g)$ and $\CU^\vee(g)$.
For this we check that both terms of~\eqref{rpi} are orthogonal to $\CO(g)$ and $\CU^\vee(g)$.
Indeed, for the second term, $\CO_\Sigma(-1)$, we have
$$
\begin{array}{l}
\Ext^\bullet(\CO(g),\CO_{\Sigma}(-1)) \cong H^\bullet(\Sigma,\CO(-2)) = 0,\\
\Ext^\bullet(\CU^\vee(g),\CO_{\Sigma}(-1)) \cong H^\bullet(\Sigma,\CU(-2)) = 0.
\end{array}
$$
Further, for the first term, $\CO(h-g)$, we have
$$
\begin{array}{l}
\Ext^\bullet(\CO(g),\CO(h-g)) = H^\bullet(M,\CO(h-2g)) = H^\bullet(M,\CO(g-e)) = H^\bullet(\Gr(2,V),I_S(g)),\\
\Ext^\bullet(\CU^\vee(g),\CO(h-g)) = H^\bullet(M,\CU(h-2g)) = H^\bullet(M,\CU(g-e)) = H^\bullet(\Gr(2,V),I_S\otimes \CU(g)).
\end{array}
$$
Using resolution~\eqref{os} and Borel--Bott--Weil Theorem we conclude that these cohomology vanish.

Thus we have checked that
\begin{equation}\label{llll}
\LL_{\CO(h-g)}(\CO_{\Sigma}(-1)) \cong \LL_{\langle\CO(g),\CU^\vee(g)\rangle}(i_*p^*\CO_S(2g-F)).
\end{equation}
On the other hand, recall that by Lemma~\ref{mutfun} the mutation functors $\LL_{\CO(h-g)}$ and $\RR_{\CO(h-g)}$
are mutually inverse equivalences between ${}^\perp\CO(h-g)$ and $\CO(h-g)^\perp$.
So, let us check that $\CO_{\Sigma}(-1) \in {}^\perp\CO(h-g)$,
i.e. that $\Ext^\bullet(\CO_{\Sigma}(-1),\CO(h-g)) = 0$.
Indeed, by Serre duality we have
$$
\Ext^t(\CO_{\Sigma}(-1),\CO(h-g))^\vee =
\Ext^{4-t}(\CO(2h),\CO_{\Sigma}(-1)) = H^{4-t}(\Sigma,\CO_\Sigma(-1)) = 0.
$$
Thus~\eqref{llll} implies that
$$
\RR_{\CO(h-g)}\LL_{\langle\CO(g),\CU^\vee(g)\rangle}(i_*p^*\CO_S(2g-F)) \cong \CO_\Sigma(-1).
$$
Finally note that $\langle V/\CU, \CO(g) \rangle = \langle \CU^\perp(g), \CO(g) \rangle = \langle \CO(g), \CU^\vee(g) \rangle$,
hence $\LL_{\langle V/\CU, \CO(g) \rangle} = \LL_{\langle \CO(g), \CU^\vee(g) \rangle}$.
In view of this, the above isomorphism proves the Proposition.
\end{proof}

\subsection{The proof of the Main Theorem}

Now we are in a position to prove the Main Theorem.

Indeed, let us start with a semiorthogonal decomposition~\eqref{som10}.
Replacing $\D^b(S)$ by its semiorthogonal decomposition~\eqref{dbs}, twisting by $\CO_S(2g)$, and taking into account Proposition~\ref{FS} we obtain
the following semiorthogonal decomposition
$$
\D^b(M) = \langle \CS_{-1}(-g), \CS_0(-g), \CS_1(-g), \CS_2(-g), \CO(-h), \CO, \{ \CO_{\Sigma^+_i}(-1) \}_{i=1}^{10}, \Psi_2(\CA_S\otimes\CO_S(2g)) \rangle.
$$
Let us twist it by $\CO_M(g)$. As a result we obtain
\begin{equation}\label{som11}
\D^b(M) = \langle \CS_{-1}, \CS_0, \CS_1, \CS_2, \CO(g-h), \CO(g), \{ \CO_{\Sigma^+_i} \}_{i=1}^{10}, \Psi_3(\CA_S) \rangle,
\end{equation}
where
$\Psi_3 = \TT_{\CO(g)}\circ\Psi_2\circ\TT_{\CO(2g)}$.
It remains to make several additional mutations.

Step 9.
Apply the left mutation through $\CO(g-h)$ and $\CO(g)$ to the block $\{ \CO_{\Sigma_i^+} \}_{i=1}^{10}$.
The sheaves $\CO_{\Sigma^+_i}$ are completely orthogonal to $\CO(g)$ and $\CO(g-h)$, since
$$
\Ext^\bullet(\CO(g-th),\CO_{\Sigma^+_i}) = H^\bullet(\Sigma_i^+,\CO_{\Sigma_i^+}(-1)) = 0.
$$
Therefore, by Lemma~\ref{perpmut} we obtain
\begin{equation}\label{som12}
\D^b(M) = \langle \CS_{-1}, \CS_0, \CS_1, \CS_2, \{ \CO_{\Sigma^+_i} \}_{i=1}^{10}, \CO(g-h), \CO(g), \Psi_3(\CA_S) \rangle.
\end{equation}

Step 10.
Translate $\CO(g-h)$, $\CO(g)$ and $\Psi_3(\CA_S)$ to the left. By Lemma~\ref{longmut} we obtain
\begin{equation}\label{som13}
\D^b(M) = \langle \CO(-2h), \CO(-h), \Psi_4(\CA_S), \CS_{-1}, \CS_0, \CS_1, \CS_2, \{ \CO_{\Sigma^+_i} \}_{i=1}^{10} \rangle,
\end{equation}
where
$\Psi_4 = \TT(-g-h)\circ\Psi_3 = \TT_{\CO(-h)}\circ\Psi_2\circ\TT_{\CO(2g)}$.

Comparing this decomposition with Theorem~\ref{dbm} we conclude that
$$
\Phi_0(\D^b(X_+)) = \langle \CO_M(-2h), \CO_M(-h), \Psi_4(\CA_S) \rangle.
$$
By Lemma~\ref{phi0} the functor $\Phi_0$ commutes with the twist by $\CO(-h)$, hence we can rewrite
$$
\Phi_0(\D^b(X_+)) = \langle \CO_M(-h), \CO_M, \Psi_5(\CA_S) \rangle,
$$
where
$\Psi_5 = \TT_{\CO(h)}\circ\Psi_4 = \Psi_2\circ\TT_{\CO(2g)}$.
Since again by Lemma~\ref{phi0} we have $\CO_M(-th) = \Phi_0(\CO_{X^+}(-th))$,
and $\Phi_0$ is fully faithful, we conclude that
\begin{equation}
\Psi_5(\CA_S) = \Phi_0\circ\TT_{\CO(2h)}(\CA_{X^+}).
\end{equation}

\end{document}